\documentclass{biometrika}

\pdfoutput=1
\RequirePackage{amsmath}

\usepackage{times}
\usepackage{bm}
\usepackage{amsfonts}
\usepackage{amssymb}
\usepackage{graphicx}
\usepackage{esint}
\usepackage{eucal} % change the font of \mathcal
\usepackage{bbm} % Need for the indicator function notation
\usepackage[plain,noend]{algorithm2e} % Need for the algorithm environment
\usepackage{fixltx2e} % Fix the 'textsubscript' command
\usepackage{lscape} % For displaying figures in landscape mode

\usepackage{color}
\usepackage{mathrsfs} % Define the math script font
\usepackage{subfig}

%%%% Operators
%\DeclareMathOperator{\tr}{tr}

%%%% Letters
%%% Bolded, Tilded and Hatted English

\def\ftil{\widetilde{f}}

  \def\ts{\widetilde{s}}

%%% Bolded, Tilded and Hatted Greek

\def\hbeta{\widehat{\beta}}  
\def\tbeta{\widetilde{\beta}}

\def\hmu{\widehat{\mu}}

\def\tpi{\widetilde{\pi}}

\def\tPi{\widetilde{\Pi}}

\def\htheta{\widehat{\theta}}

\def\ttheta{\widetilde{\theta}}

%%% Word

% \hypersetup{draft} % Avoid the problem "\pdfendlink ended up in different nesting level than \pdfstartlink" by disabling hyperlink

\begin{document}

\title{Convergence of Regression-Adjusted Approximate Bayesian Computation}

\author{Wentao Li}
\affil{School of Mathematics, Statistics and Physics,  Newcastle University,
Newcastle upon Tyne
NE1 7RU, 
U.K.
\email{wentao.li@newcastle.ac.uk}}
\author{and Paul Fearnhead}
\affil{Department of Mathematics and Statistics, Lancaster University, Lancaster LA1 4YF, U.K.  \email{p.fearnhead@lancaster.ac.uk}}
\maketitle

\begin{abstract}
We present asymptotic results for the regression-adjusted version of approximate Bayesian computation introduced by \cite{Beaumont:2002}. We show that for an appropriate choice
of the bandwidth, regression adjustment will lead to a posterior that, asymptotically, correctly quantifies uncertainty. 
Furthermore, for such a choice of bandwidth we can implement an importance
sampling algorithm to sample from the posterior whose acceptance probability tends to unity as the data sample size increases. This compares favourably to results for standard 
approximate Bayesian computation, where the only way to obtain a posterior
that correctly quantifies uncertainty is to choose a much smaller bandwidth; one for which the acceptance probability tends to zero and hence for which Monte Carlo error will dominate. 
\end{abstract}

Keywords: Approximate Bayesian computation;  Importance sampling; Local-linear regression; Partial information.

\section{Introduction}

Modern statistical applications increasingly require the fitting of complex
statistical models, which are often intractable in the sense that
it is impossible to evaluate the likelihood function. 
This excludes standard implementation of likelihood-based
methods, such as maximum likelihood estimation or Bayesian analysis.
To overcome this there has been substantial interest in
likelihood-free or simulation-based methods, which replace calculating 
the likelihood by simulation of pseudo datasets from the model. Inference
can then be performed by comparing these pseudo datasets, simulated for a range of
different parameter values, to the actual data. 

Examples of such likelihood-free methods include simulated methods of moments \cite[]{duffie1993simulated},
indirect inference \cite[]{Gourieroux:1993,Frigessi:2004}, synthetic likelihood \cite[]{wood2010statistical} and approximate Bayesian computation \cite[]{Beaumont:2002}.  
Of these, approximate Bayesian computation methods are arguably the most common methods
for Bayesian inference, and have been popular in population genetics
\cite[e.g.,][]{Beaumont:2002,Cornuet:2008}, ecology \cite[e.g.,][]{beaumont2010approximate} and
systems biology \cite[e.g.,][]{toni2009approximate}; more recently they have seen 
increased use in other application areas, such as  econometrics \cite[]{calvet2015accurate} and
epidemiology \cite[]{drovandi2011estimation}.

The idea of approximate Bayesian computation is to first summarize the data using low-dimensional summary statistics, such as sample means or autocovariances or suitable quantiles of the data. 
The posterior
density given the summary statistics is then approximated as follows.
Assume the data are $Y_{{\rm obs}}=(y_{{\rm obs},1},\ldots,y_{{\rm obs},n})$
and modelled as a draw from a parametric model % $f_{n}(y\mid\theta)$
with parameter $\theta \in \mathbb{R}^p$. Let $K(x)$ be a positive kernel,
where $\max_{x}K(x)=1$, and $\varepsilon>0$ is the bandwidth.
For a given $d$-dimensional summary statistic $s_{}(Y)$, our model will define
a density $f_n(s \mid \theta)$. We then define a joint density, $\pi_{\varepsilon}(\theta,s\mid s_{{\rm obs}})$, for $(\theta,s)$ as
\begin{align} 
 & \frac{\pi(\theta)f_{n}(s\mid\theta)K\{\varepsilon^{-1}(s-s_{{\rm obs}})\}}{\int_{\mathbb{R}^p\times\mathbb{R}^{d}}\pi(\theta)
f_{n}(s\mid\theta)K\{\varepsilon^{-1}(s-s_{{\rm obs}})\}\,d\theta d s},\label{eq:parameter_summary_jointdensity}
\end{align}
where $s_{{\rm obs}}=s_{}(Y_{{\rm obs}})$. Our approximation to the posterior density is the marginal density
\begin{equation} \label{eq:ABC_def}
\pi_{\varepsilon}(\theta\mid s_{{\rm obs}})=\int \pi_{\varepsilon}(\theta,s\mid s_{{\rm obs}})\,d s,
\end{equation}
which we call the approximate Bayesian computation posterior density. For brevity we will often shorten this to posterior density in the following. We will always call the actual posterior given the summary the true posterior. 

The idea of approximate Bayesian computation is that we can sample from $\pi_{\varepsilon}(\theta\mid s_{{\rm obs}})$ without needing to evaluate the likelihood function or $f_{n}(s\mid\theta)$. The 
simplest approach is via rejection sampling \cite[]{Beaumont:2002}, which proceeds by simulating a parameter value and an associated summary statistic from $\pi(\theta)f_n(s\mid\theta)$. This pair
is then accepted with probability $K\{\varepsilon^{-1}(s-s_{{\rm obs}})\}$. The accepted pairs will be drawn from (\ref{eq:parameter_summary_jointdensity}), and the accepted parameter values will be drawn from the 
posterior (\ref{eq:ABC_def}). Implementing this rejection sampler requires only the ability to simulate pseudo data sets from the model, and then to be able to calculate the summary statistics for those data sets.

Alternative algorithms for simulating from the posterior include adaptive or sequential importance sampling 
\cite[]{beaumont2009adaptive,bonassi2015sequential,lenormand2013adaptive,filippi2013optimality} and
Markov chain Monte Carlo approaches \cite[]{Marjoram:2003,Wegmann:2009}. These aim to propose parameter values in areas of high posterior probability, and thus
can be substantially more efficient than rejection sampling. However, the computational efficiency of all these methods is limited by the probability of 
acceptance for data simulated with a parameter value that has high posterior probability. 
%This has led to recent attempts to try and optimize the pseudo data sets simulated to substantially increase this acceptance probability \cite[]{meeds2015optimization}.

This paper is concerned with the asymptotic properties of approximate Bayesian computation. %This is currently an active area of research, with a number of recent papers, includine by the authors 
It builds upon \cite{Li/Fearnhead:2018} and \cite{Frazier:2016}, who present results on the asymptotic behaviour of the posterior distribution and the posterior mean of approximate Bayesian computation as the amount of data, $n$, increases. 
Their results highlight the tension in approximate Bayesian computation between choices of the summary statistics and bandwidth
that will lead to more accurate inferences, against choices that will reduce the computational cost or Monte Carlo error of algorithms for sampling from the posterior. 

An informal summary of some of these results is as follows. Assume a 
fixed dimensional summary statistic and that the true posterior variance given this summary decreases like $1/n$ as $n$ increases. %The following theoretical results do
%not directly consider the important choice of summary \cite[though see][]{fearnhead2012constructing,OTHERS}. However the asymptotic results do give insight into the impact
%that the choice of summary has.
The theoretical results compare the posterior, or  posterior mean, of approximate Bayesian computation, to the true posterior, or true posterior mean, given the summary of the data. 
The accuracy of using approximate Bayesian computation is governed by the choice of bandwidth, and this choice should depend on $n$. \cite{Li/Fearnhead:2018} shows that the optimal
choice of this bandwidth will be $O(n^{-1/2})$. With this choice, estimates based on the posterior mean of approximate Bayesian computation can, asymptotically, be as accurate
as estimates based on the true posterior mean given the summary. Furthermore the Monte Carlo error of an importance sampling algorithm with a good proposal distribution will only inflate the
mean square error of the estimator by a constant factor of the form $1+O(1/N)$, where $N$ is the number of pseudo data sets. These 
results are similar to the asymptotic results for indirect inference, where error for a Monte Carlo sample of size $N$ 
also inflates the overall mean square error of estimators by a factor $1+O(1/N)$ \cite[]{Gourieroux:1993}. By comparison choosing a bandwidth which is $o(n^{-1/2})$ will 
lead to an acceptance probability that tends to
zero as $n\rightarrow \infty$, and the Monte Carlo error of approximate Bayesian computation will blow up. 
Choosing a bandwidth that decays more slowly than $O(n^{-1/2})$ will also lead to a regime where the Monte Carlo error
dominates, and can lead to a non-negligible bias in the posterior mean that inflates the error.

While the above results for a bandwidth that is $O(n^{-1/2})$ are positive in terms of point estimates, they are negative in terms of the calibration of the posterior. 
With such a bandwidth the posterior density of approximate Bayesian computation always over-inflates the parameter uncertainty:
see Proposition \ref{thm:ABC_bad_convergence} below and Theorem 2 of \cite{Frazier:2016}. 

The aim of this paper is to show that a
variant of approximate Bayesian computation can yield inference that is both accurate in terms of point estimation, with its posterior mean having the same frequentist asymptotic variance as the true posterior
mean given the summaries, and calibrated, in the sense that its posterior variance equals
this asymptotic variance, when the bandwidth converges to zero at a rate slower than $O(n^{-1/2})$. This means that the acceptance probability of a good approximate Bayesian computation
algorithm  will tend to unity as $n\rightarrow \infty$.

\section{Notation and Set-up}

We denote the data by $Y_{{\rm obs}}=(y_{{\rm obs},1},\ldots,y_{{\rm obs},n})$,
where $n$ is the sample size, and each observation, $y_{{\rm obs},i}$,
can be of arbitrary dimension. Assume the data are modelled as a draw
from a parametric density, $f_{n}(y\mid\theta)$, and consider asymptotics as $n\rightarrow\infty$. This density depends on
an unknown parameter $\theta\in\mathbb{R}^{p}$. Let $\mathscr{B}^p$ be the Borel sigma-field on $\mathbb{R}^p$. We will let $\theta_{0}$
denote the true parameter value, and $\pi(\theta)$ the prior distribution
for the parameter. Denote the support of $\pi(\theta)$ by $\mathcal{P}$.
Assume that a fixed-dimensional summary statistic $s_n(Y)$ is chosen and
its density under our model is $f_{n}(s\mid\theta)$.
The shorthand $S_n$ is used to denote the random variable with
density $f_{n}(s\mid\theta)$. Often we will simplify notation and write $s$ and $S$ for $s_n$ and $S_n$ respectively. Let $N(x;\mu,\Sigma)$ be the normal density at $x$ with mean $\mu$ and variance $\Sigma$. Let $A^{c}$ be the
complement of a set $A$ with respect to the whole space. For a series
$x_{n}$ 
we write $x_{n}=\Theta(a_n)$
if there exist constants $m$ and $M$ such that $0<m<|x_{n}/a_{n}|<M<\infty$ as $n\rightarrow\infty$. For a real function $g(x)$, denote its gradient function at $x=x_{0}$ by $D_{x}g(x_{0})$. 
To simplify notation, $D_{\theta}$ is written as $D$. Hereafter $\varepsilon$ is considered to depend on $n$, so the notation $\varepsilon_n$ is used. 

The conditions of the theoretical results are stated below. 
\begin{condition} \label{cond:par_true} \label{cond:prior_regular}
There exists some $\delta_{0}>0$, such
that $\mathcal{P}_{0}=\{\theta:|\theta-\theta_{0}|<\delta_{0}\}\subset\mathcal{P}$, $\pi(\theta)\in C^{2}(\mathcal{P}_{0})$ and $\pi(\theta_{0})>0$. 
\end{condition}

\begin{condition} \label{cond:kernel_prop}
The kernel satisfies
(i) $\int vK(v)\,dv=0$; (ii) $\int\prod_{k=1}^{l}v_{i_{k}}K(v)\,dv<\infty$
for any coordinates $(v_{i_{1}},\ldots,v_{i_{l}})$ of $v$ and $l\leq p+6$; (iii) $K(v)\propto\overline{K}(\|v\|_{\Lambda}^{2})$ where $\|v\|_{\Lambda}^{2}=v^{T}\Lambda v$
and $\Lambda$ is a positive-definite matrix, and $K(v)$ is a decreasing
function of $\|v\|_{\Lambda}$; (iv) $K(v)=O(e^{-c_{1}\|v\|^{\alpha_{1}}})$ for some $\alpha_{1}>0$
and $c_{1}>0$ as $\|v\|\rightarrow\infty$.
\end{condition}

\begin{condition} \label{cond:sum_conv}
There exists a sequence $a_{n}$, satisfying
$a_{n}\rightarrow\infty$ as $n\rightarrow\infty$, a $d$-dimensional
vector $s(\theta)$ and a $d\times d$ matrix $A(\theta)$, such
that for all $\theta\in\mathcal{P}_{0}$, 
\[
a_{n}\{S_{n}-s(\theta)\}\rightarrow N\{0,A(\theta)\},\ \ n\rightarrow\infty,
\]
in distribution. We also assume that $s_{\rm obs}\rightarrow s(\theta_0)$ in probability. Furthermore, (i) $s(\theta)$ and $A(\theta)\in C^{1}(\mathcal{P}_{0})$, and
$A(\theta)$ is positive definite for any $\theta$; (ii) for any $\delta>0$ there exists $\delta'>0$ such that $\|s(\theta)-s(\theta_0)\|>\delta'$ for all $\theta$ satisfying $\|\theta-\theta_0\|>\delta$;
and (iii) $I(\theta)=Ds(\theta)^{T}A^{-1}(\theta)Ds(\theta)$ has full rank at $\theta=\theta_{0}$. 
\end{condition}

Let $\widetilde{f}_{n}(s\mid\theta)=N\{s;s(\theta),A(\theta)/a_{n}^{2}\}$ be the density of the normal approximation to $S$
and introduce the standardized random variable $W_{n}(s)=a_{n}A(\theta)^{-1/2}\{S-s(\theta)\}$. We further let
$f_{W_n}(w\mid \theta)$ and $\tilde{f}_{W_n}(w\mid \theta)$ be the densities for $W_n$ under the true model for $S$ and under
our normal approximation to the model for $S$.

\begin{condition} \label{cond:sum_approx}
There exists $\alpha_{n}$ satisfying
$\alpha_{n}/a_{n}^{2/5}\rightarrow\infty$ and a density $r_{\rm max}(w)$
satisfying Condition \ref{cond:kernel_prop} (ii)--(iii) where $K(v)$ is replaced with $r_{\rm max}(w)$, such that $\sup_{\theta\in\mathcal{P}_{0}}\alpha_{n}\left|f_{W_{n}}(w\mid\theta)-\widetilde{f}_{W_{n}}(w\mid\theta)\right|\leq c_{3}r_{\rm max}(w)$
for some positive constant $c_{3}$.
\end{condition}

\begin{condition} \label{cond:sum_approx_tail}
The following statements hold: (i) $r_{\rm max}(w)$ satisfies Condition \ref{cond:kernel_prop} (iv); 
and 
(ii) $\sup_{\theta\in\mathcal{P}_{0}^{c}}f_{W_{n}}(w\mid\theta)=O(e^{-c_{2}\|w\|^{\alpha_{2}}})$
as $\|w\|\rightarrow\infty$ for some positive constants $c_{2}$
and $\alpha_{2}$, and $A(\theta)$ is bounded in $\mathcal{P}$.
\end{condition}

Conditions \ref{cond:par_true}--\ref{cond:sum_approx_tail} are from \cite{Li/Fearnhead:2018}. Condition \ref{cond:kernel_prop} is a requirement for
the kernel function and is satisfied by all commonly used kernels, such as any kernel with compact support or the Gaussian kernel. 
Condition \ref{cond:sum_conv} assumes a central limit theorem for the summary statistic
with rate $a_{n}$, and, roughly speaking, requires the summary statistic to accumulate information when $n$. 
This is a natural assumption, since many common summary statistics are sample moments, proportions, quantiles and autocorrelations, for which
a central limit theorem would apply. 
It is also possible to verify the asymptotic normality of auxiliary model-based or composite likelihood-based summary statistics \cite[]{drovandi2015bayesian,ruli2016approximate}
by referring to the rich literature on asymptotic properties of quasi maximum-likelihood estimators \cite[]{varin2011overview} 
or quasi-posterior estimators \cite[]{chernozhukov2003mcmc}. This assumption does not cover ancillary statistics, using the full data as a summary statistic, or
statistics based on distances, such as an asymptotically chi-square distributed test statistic. Condition \ref{cond:sum_approx} assumes that, 
in a neighborhood of $\theta_{0}$, $f_{n}(s\mid\theta)$ deviates from the leading term of its Edgeworth expansion by a rate $a_{n}^{-2/5}$. 
This is weaker than the standard requirement, $o(a_n^{-1})$, for the remainder from Edgeworth expansion. It also assumes that the deviation is uniform, 
which is not difficult to satisfy in a compact neighborhood. Condition \ref{cond:sum_approx_tail} further assumes that $f_{n}(s\mid\theta)$ has exponentially decreasing tails 
with rate uniform in the support of $\pi(\theta)$. This implies that posterior moments from approximate Bayesian computation are dominated by integrals in the neighborhood of $\theta_0$ and 
have leading terms with concise expressions. With Condition \ref{cond:sum_approx_tail} weakened, the requirement of $\varepsilon_n$ for the proper convergence to 
hold might depend on the specific tail behavior of $f_{n}(s\mid\theta)$. %, such as polynomial order if the tails are polynomially decreasing. 

Additionally, for the results regarding regression adjustment the following moments of the summary statistic are required to exist.
\begin{condition} \label{cond:likelihood_moments}
The first two moments, $\int_{\mathbb{R}^{d}}s f_{n}(s\mid\theta)\,d s$
and $\int_{\mathbb{R}^{d}}ss^{T}f_{n}(s\mid\theta)\,d s$, exist.
\end{condition}

\section{Asymptotics of approximate Bayesian computation}

\subsection{Posterior}\label{ABC_posterior}

First we consider the convergence of the posterior distribution of approximate Bayesian computation,
denoted by $\Pi_{\varepsilon}(\theta\in A\mid s_{{\rm obs}})$ for $A\in\mathscr{B}^p$,
as $n\rightarrow\infty$. The distribution function is a
random function with the randomness due to $s_{{\rm obs}}$.
%Like the Bernstein von-Mises theorem \cite[]{bickel2015mathematical}, two convergence results are given. 
We present two convergence results. One is the convergence of the 
posterior distribution function
of a properly scaled and centered version of $\theta$, see Proposition \ref{thm:ABC_bad_convergence}. The other is the convergence
of the posterior mean, a result which comes from \cite{Li/Fearnhead:2018} but, for convenience, is repeated as Proposition \ref{thm:ABC_mean_convergence}. 
%For similar results, under slightly different conditions, see Theorems 2 and 3 of \cite{Frazier:2016}.

The following proposition gives three different limiting forms for $\Pi_{\varepsilon}(\theta\in A\mid s_{{\rm obs}})$, corresponding to different rates for how the bandwidth
decreases relative to the rate of the central limit theorem in Condition \ref{cond:sum_conv}. We summarize these competing rates by 
defining $c_{\varepsilon}=\lim_{n\rightarrow\infty}a_{n}\varepsilon_{n}$. % Then for different values of $c_{\varepsilon}$ we get the following limiting results.
%I HAVE REMOVED THE DEFINITION OF $a_{n,\varepsilon}$ AS I COULD NOT SEE WHERE IT COMES
% Let $a_{n,\varepsilon}=a_{n}\mathbbm{1}_{c_{\varepsilon}<\infty}+\varepsilon_{n}^{-1}\mathbbm{1}_{c_{\varepsilon}=\infty}$
%which is the lower rate increasing to $\infty$ between $a_{n}$ and
%$\varepsilon_{n}^{-1}$. Then for various rates of $\varepsilon_{n}$,
%we have the following results. 

% IS $\theta_{\varepsilon}$ THE ABC POSTERIOR MEAN? I HAVE REMINDED THE READER IN THE STATEMENT OF THE THEOREM.

\begin{proposition}
\label{thm:ABC_bad_convergence} Assume Conditions \ref{cond:par_true}--\ref{cond:sum_approx_tail}. Let $\theta_{\varepsilon}$ denote the posterior mean of approximate
Bayesian computation. As $n\rightarrow\infty$, if $\varepsilon_{n}=o(a_{n}^{-3/5})$ then the following convergence holds, depending on the value of $c_{\varepsilon}$.
\begin{itemize}
 \item[(i)] If $c_{\varepsilon}=0$ then
 \[
  \sup_{A\in\mathscr{B}^p}\left|\Pi_{\varepsilon}\{a_n(\theta-\theta_{\varepsilon})\in A\mid s_{{\rm obs}}\}-\int_{A}\psi(t)\,dt\right|\rightarrow0, 
 \]
in probability, where $\psi(t)=N\{t;0,I(\theta_{0})^{-1}\}$.
 \item[(ii)] If $c_{\varepsilon}\in (0,\infty)$ then for any $A\in\mathscr{B}^p$,
\[
  \Pi_{\varepsilon}\{a_n(\theta-\theta_{\varepsilon})\in A\mid s_{{\rm obs}}\}\rightarrow\int_{A}\psi(t)\,dt,
\]
in distribution, where
\[
 \psi(t)\propto \int_{\mathbb{R}^{d}}N[t;c_{\varepsilon}\beta_0\{v-E_{G}(v)\},I(\theta_{0})^{-1}]G(v)\,dv,\ \ \beta_{0}=I(\theta_{0})^{-1}Ds(\theta_{0})^{T}A(\theta_{0})^{-1},
\]
and $G(v)$ is a random density of $v$, with mean $E_{G}(v)$, which depends on $c_{\varepsilon}$ and $Z\sim N(0,I_{d})$. 
 \item[(iii)] If $c_{\varepsilon}=\infty$ then
\[
  \sup_{A\in\mathscr{B}^p}\left|\Pi_{\varepsilon}\{\varepsilon_n^{-1}(\theta-\theta_{\varepsilon})\in A\mid s_{{\rm obs}}\}-\int_{A}\psi(t)\,dt\right|\rightarrow 0,
 \]
in probability, where $\psi(t)\propto K\{Ds(\theta_{0})t\}$.
\end{itemize}
\end{proposition}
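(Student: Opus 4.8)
The plan is to work from the representation $\pi_{\varepsilon}(\theta\mid s_{\rm obs})\propto \pi(\theta)g_n(\theta)$, where $g_n(\theta)=\int f_n(s\mid\theta)K\{\varepsilon_n^{-1}(s-s_{\rm obs})\}\,ds$ is the kernel-smoothed likelihood, and to identify the pointwise limit of this density under the scaling that matches each regime. The first step, as in \cite{Li/Fearnhead:2018}, is to replace $f_n(s\mid\theta)$ by the Gaussian surrogate $\widetilde f_n(s\mid\theta)=N\{s;s(\theta),A(\theta)/a_n^{2}\}$: on $\mathcal{P}_0$ the error of the standardized density is at most $c_3\alpha_n^{-1}r_{\rm max}(w)$ by Condition \ref{cond:sum_approx}, and the mass contributed by $\theta\notin\mathcal{P}_0$ is exponentially small by Condition \ref{cond:sum_approx_tail}; the bandwidth constraint $\varepsilon_n=o(a_n^{-3/5})$, together with $\alpha_n/a_n^{2/5}\to\infty$, is precisely what makes both this Edgeworth remainder and the $O(\varepsilon_n^{2})$ kernel-smoothing bias negligible at the resolution of the posterior (roughly, the remainder enters the posterior scaled by a factor of order $a_n\varepsilon_n$, so one needs $\alpha_n^{-1}a_n\varepsilon_n\to0$, which under Condition \ref{cond:sum_approx} becomes $\varepsilon_n=o(a_n^{-3/5})$).

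Next, writing $W_{{\rm obs},n}=a_n\{s_{\rm obs}-s(\theta_0)\}$, which by Condition \ref{cond:sum_conv} converges in distribution to $A(\theta_0)^{1/2}Z$ with $Z\sim N(0,I_d)$, I substitute $s=s_{\rm obs}+\varepsilon_n v$ so that $g_n(\theta)=\varepsilon_n^{d}\int f_n(s_{\rm obs}+\varepsilon_n v\mid\theta)K(v)\,dv$, and Taylor-expand $s(\theta)$ and $A(\theta)$ about $\theta_0$. In cases (i) and (ii) put $\theta=\theta_0+a_n^{-1}t$: since $a_n\varepsilon_n\to c_{\varepsilon}$ one gets $a_n\{s_{\rm obs}+\varepsilon_n v-s(\theta)\}\to A(\theta_0)^{1/2}Z+c_{\varepsilon}v-Ds(\theta_0)t$, so after dominated convergence the unnormalized posterior density in $t$ tends to a constant times $\int K(v)\exp\{-\tfrac12(A(\theta_0)^{1/2}Z+c_{\varepsilon}v-Ds(\theta_0)t)^{\T}A(\theta_0)^{-1}(A(\theta_0)^{1/2}Z+c_{\varepsilon}v-Ds(\theta_0)t)\}\,dv$. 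Completing the square in $t$ rewrites this, up to normalization, as $\int G(v)\,N\{t;\beta_0 A(\theta_0)^{1/2}Z+c_{\varepsilon}\beta_0 v,\,I(\theta_0)^{-1}\}\,dv$ with $G(v)\propto K(v)\exp\{-\tfrac12 u^{*\T}(I_d-A(\theta_0)^{-1/2}Ds(\theta_0)I(\theta_0)^{-1}Ds(\theta_0)^{\T}A(\theta_0)^{-1/2})u^{*}\}$ and $u^{*}=Z+c_{\varepsilon}A(\theta_0)^{-1/2}v$; when $c_{\varepsilon}=0$ the exponential factor is free of $v$, so this collapses to the single Gaussian $N\{t;\beta_0 A(\theta_0)^{1/2}Z,I(\theta_0)^{-1}\}$. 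For case (iii) put instead $\theta=\theta_0+\varepsilon_n\tau$; then $N\{s_{\rm obs}+\varepsilon_n v;s(\theta),A(\theta)/a_n^{2}\}$, viewed as a density in $v$, is Gaussian with mean tending to $Ds(\theta_0)\tau$ and variance of order $(a_n\varepsilon_n)^{-2}\to0$, so integrating against $K$ and using continuity of $K$ (Condition \ref{cond:kernel_prop}) gives $g_n(\theta_0+\varepsilon_n\tau)\to K\{Ds(\theta_0)\tau\}$ and hence a limiting posterior density in $\tau$ proportional to $K\{Ds(\theta_0)\tau\}$.

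It remains to re-centre at $\theta_{\varepsilon}$ and to upgrade the mode of convergence. By Proposition \ref{thm:ABC_mean_convergence} of \cite{Li/Fearnhead:2018} (consistently with the mean of the limiting density just found), $a_n(\theta_{\varepsilon}-\theta_0)$ in cases (i)--(ii), and $\varepsilon_n^{-1}(\theta_{\varepsilon}-\theta_0)$ in case (iii), converge to $\beta_0 A(\theta_0)^{1/2}Z+c_{\varepsilon}\beta_0 E_{G}(v)$ in cases (i)--(ii) (with $c_{\varepsilon}=0$ and $G\propto K$ in case (i)) and to $0$ in case (iii) by symmetry of $K$ from Condition \ref{cond:kernel_prop}(iii); shifting the integration variable by this amount re-centres the limit to $N\{0,I(\theta_0)^{-1}\}$ in case (i), to $\int G(v)N[t;c_{\varepsilon}\beta_0\{v-E_{G}(v)\},I(\theta_0)^{-1}]\,dv$ in case (ii), and leaves $K\{Ds(\theta_0)t\}$ in case (iii). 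Pointwise convergence of the densities together with convergence of the normalizing constants gives $L^1$ convergence, and since each limiting density is continuous, Scheff\'e's lemma turns this into $\sup_{A\in\mathscr{B}^p}|\Pi_{\varepsilon}(\cdot\in A\mid s_{\rm obs})-\int_A\psi(t)\,dt|\to0$ for the relevant scaled variable. In cases (i) and (iii) the limit is non-random and all dependence on $s_{\rm obs}$ is through $o_p(1)$ remainders, so this holds in probability; in case (ii) the limit still depends on $Z$, i.e. on the distributional limit of $W_{{\rm obs},n}$, so only convergence in distribution holds and the supremum over $A$ cannot be taken inside.

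The main obstacle is the uniformity needed to interchange limits and integration, simultaneously over the moderate-deviation range of $t$ (or $\tau$): one must dominate $f_n(s_{\rm obs}+\varepsilon_n v\mid\theta_0+a_n^{-1}t)$, after the Taylor expansion, by a function of $(v,t)$ integrable uniformly in $n$ --- controlling the quadratic-form exponent, the $O(a_n^{-2}|t|^{2})$ and $O(\varepsilon_n^{2})$ expansion remainders, and the $\alpha_n^{-1}r_{\rm max}$ Edgeworth remainder at once --- and then show that $\theta\notin\mathcal{P}_0$ and large $\|v\|,\|t\|$ contribute negligibly to both numerator and denominator, which is where Conditions \ref{cond:kernel_prop}(iv) and \ref{cond:sum_approx_tail} and the exact bandwidth rate are used. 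In case (ii) there is the extra bookkeeping of exhibiting the whole limit, including $G$ and $E_{G}(v)$, as a continuous functional of $W_{{\rm obs},n}$ so that the continuous mapping theorem delivers the stated in-distribution convergence.
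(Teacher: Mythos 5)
Your proposal is correct in substance and arrives at the right limits (your completed square reproduces Lemma \ref{lem:norm_dens_expand} exactly, and your $G$ agrees with the paper's), but it is organized quite differently from the paper's proof. The paper does not expand the marginal density $\pi(\theta)g_n(\theta)$ directly: after truncating the prior and swapping $f_n$ for $\widetilde f_n$, it writes the ABC posterior as a mixture over $v$ of the \emph{true} posterior given the perturbed summary $s_{\rm obs}+\varepsilon_n v$ (equation \eqref{eq:ABC_posterior_altform1}), proves a Bernstein--von Mises limit for that conditional posterior under the misspecified normal model by verifying the Kleijn--van der Vaart conditions (Lemma \ref{lem:misspecified_posterior_limit}), and then combines this with an expansion of the mixing density (Lemma \ref{lem:ABC_likelihood_expansion}) and of the posterior mean (Lemma \ref{lem:ABC_probability_expand}); the case $c_\varepsilon=\infty$ is rescued by the substitution $v''=a_n\varepsilon_n v$ so that the same conditional BvM still applies. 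Your route performs essentially the same Taylor-expansion algebra but obtains the uniformity over $A\in\mathscr{B}^p$ from Scheff\'e's lemma applied to the joint density rather than from the BvM theorem applied conditionally; what the paper's packaging buys is that the hardest step --- uniform localization and domination --- is isolated in the citable conditions (KV1)/(KV2) and in Lemmas \ref{lem:ABC_likelihood_expansion}, \ref{lem:normal_leading_for_scaled_theta} and the imported results of Li and Fearnhead (2018), while your packaging is more direct but leaves exactly that work (which you correctly flag as the main obstacle) to be executed by hand. Two points to make explicit in a full write-up: the recentring in cases (i) and (iii) needs $\theta_\varepsilon-\theta_0-a_n^{-1}\beta_0A(\theta_0)^{1/2}W_{\rm obs}-\varepsilon_n\beta_0E_{G_n}(v)=o_p(a_n^{-1})$ with the \emph{same realization} of $W_{\rm obs}$ that appears in the limiting density, not merely agreement of limit laws --- this is what Lemma \ref{lem:ABC_probability_expand}(c) supplies and what your remark about $o_p(1)$ remainders tacitly uses; and in case (ii) the passage from pointwise convergence of random densities to convergence in distribution of $\Pi_\varepsilon\{a_n(\theta-\theta_\varepsilon)\in A\}$ requires exhibiting the whole expression as a continuous functional of $W_{\rm obs}$ (the paper's Lemma \ref{lem:continuity_of_integrals} plus the continuous mapping theorem), which you note but do not carry out.
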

For a similar result, under different assumptions, see Theorem 2 of \cite{Frazier:2016}. See also \cite{soubeyrand2015weak} for related convergence results for the true posterior given the summaries for 
some specific choices of summary statistics.

The explicit form of $G(v)$ is stated in the Supplementary Material. When we have the same number of summary statistics and parameters, $d=p$, the limiting distribution simplifies to
\[
\psi(t)\propto \int_{\mathbb{R}^{d}}N\{Ds(\theta_0)t;c_{\varepsilon}v,A(\theta_{0})\}K(v)\,dv. 
\]
The more complicated form in Proposition \ref{thm:ABC_bad_convergence} (ii) above arises from the need to project the summary statistics onto the parameter space. The limiting distribution
may depend on the value of the summary statistic, $s_{obs}$, in the space orthogonal to $Ds(\theta_0)^TA(\theta_0)^{-1/2}$. Hence the limit depends on a random quantity, $Z$, which can be interpreted as the noise in $s_{\rm obs}$. 

The main difference between the three convergence results is the form
of the limiting density $\psi(t)$ for the scaled random variable $a_{n,\varepsilon}(\theta-\theta_{\varepsilon})$, where
$a_{n,\varepsilon}=a_{n}\mathbbm{1}_{c_{\varepsilon}<\infty}+\varepsilon_{n}^{-1}\mathbbm{1}_{c_{\varepsilon}=\infty}$. For case (i) the bandwidth is sufficiently small that the  approximation in approximate 
Bayesian computation due to
accepting summaries close to the observed summary is asymptotically negligible. The asymptotic posterior distribution is Gaussian, and equals the limit of the
true posterior for $\theta$ given the summary. For case (iii) the bandwidth is sufficiently big that this approximation dominates and the asymptotic posterior distribution of approximate 
Bayesian computation is determined by the kernel. 
For case (ii) the approximation is of the same order as the uncertainty in $\theta$, which leads to an
asymptotic posterior distribution that is a convolution of a Gaussian distribution and the kernel. Since the limit distributions of cases (i) and (iii) are non-random in the space $L^1(\mathbb{R}^p)$, 
the weak convergence is strengthened to convergence in probability in $L^1(\mathbb{R}^p)$. See the proof in Appendix A. 

\begin{proposition} {\cite[Theorem 3.1 of][]{Li/Fearnhead:2018}}
\label{thm:ABC_mean_convergence}
Assume conditions of Proposition \ref{thm:ABC_bad_convergence}. As $n\rightarrow\infty$, if $\varepsilon_{n}=o(a_{n}^{-3/5})$,
$a_{n}(\theta_{\varepsilon}-\theta_{0})\rightarrow N\{0,I_{ABC}^{-1}(\theta_{0})\}$ in distribution.
If $\varepsilon_{n}=o(a_{n}^{-1})$ or $d=p$ or the covariance matrix of the kernel is proportional to $A(\theta_{0})$ then $I_{ABC}(\theta_{0})=I(\theta_{0})$. For other cases,
$I(\theta_{0})-I_{ABC}(\theta_{0})$ is semi-positive definite.  
\end{proposition}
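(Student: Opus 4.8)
This is Theorem~3.1 of \cite{Li/Fearnhead:2018}; I outline the argument one would give. Write the posterior mean as
\[
\theta_{\varepsilon}-\theta_{0}=\frac{\int_{\mathcal{P}\times\mathbb{R}^{d}}(\theta-\theta_{0})\,\pi(\theta)f_{n}(s\mid\theta)K\{\varepsilon_{n}^{-1}(s-s_{\rm obs})\}\,d\theta\,ds}{\int_{\mathcal{P}\times\mathbb{R}^{d}}\pi(\theta)f_{n}(s\mid\theta)K\{\varepsilon_{n}^{-1}(s-s_{\rm obs})\}\,d\theta\,ds}.
\]
The first step is a localization: by the uniform exponential tail bound in Condition~\ref{cond:sum_approx_tail}(ii) together with the separation property in Condition~\ref{cond:sum_conv}(ii) and $s_{\rm obs}\rightarrow s(\theta_{0})$, the contributions to both integrals from $\theta\notin\mathcal{P}_{0}$ are exponentially small. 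On $\mathcal{P}_{0}$ I substitute $\theta=\theta_{0}+t/a_{n}$ and $s=s_{\rm obs}+\varepsilon_{n}v$, and replace $f_{n}(s\mid\theta)$ by its Gaussian approximation $\widetilde f_{n}(s\mid\theta)=N\{s;s(\theta),A(\theta)/a_{n}^{2}\}$, bounding the error through Condition~\ref{cond:sum_approx}. The interplay between the bandwidth restriction $\varepsilon_{n}=o(a_{n}^{-3/5})$ and the Edgeworth rate $\alpha_{n}/a_{n}^{2/5}\rightarrow\infty$ is precisely what makes this replacement error, together with the $O(|t|^{2}/a_{n}^{2})$ Taylor remainder of $s(\theta)$ and the $O(|\theta-\theta_{0}|)$ variation of $\pi$ and $A$, asymptotically negligible relative to the $\Theta(a_{n}^{-1})$ signal in $\theta_{\varepsilon}-\theta_{0}$.

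Write $\widehat{s}=a_{n}\{s_{\rm obs}-s(\theta_{0})\}$, which converges in distribution to $N\{0,A(\theta_{0})\}$ by Condition~\ref{cond:sum_conv} since the data are generated at $\theta_{0}$. After the reduction above, the leading behaviour of $a_{n}(\theta_{\varepsilon}-\theta_{0})$ is the ratio of $\int_{\mathbb{R}^{p}\times\mathbb{R}^{d}}t\,N\{\widehat{s}+a_{n}\varepsilon_{n}v-Ds(\theta_{0})t;0,A(\theta_{0})\}K(v)\,dt\,dv$ to the same integral without the factor $t$. Completing the square in $t$ writes the inner integral as $\beta_{0}\{\widehat{s}+a_{n}\varepsilon_{n}v\}$ times a residual Gaussian weight in $v$,
\[
h(v)=\exp\bigl[-\tfrac{1}{2}\{\widehat{s}+a_{n}\varepsilon_{n}v\}^{\T}A(\theta_{0})^{-1/2}(I_{d}-P)A(\theta_{0})^{-1/2}\{\widehat{s}+a_{n}\varepsilon_{n}v\}\bigr],
\]
where $P$ is the orthogonal projection onto the column space of $A(\theta_{0})^{-1/2}Ds(\theta_{0})$, whence
\[
a_{n}(\theta_{\varepsilon}-\theta_{0})=\beta_{0}\widehat{s}+a_{n}\varepsilon_{n}\,\beta_{0}\,\frac{\int v\,h(v)K(v)\,dv}{\int h(v)K(v)\,dv}+o_{\mathrm{p}}(1).
\]
The second term vanishes in three situations: if $\varepsilon_{n}=o(a_{n}^{-1})$ then $a_{n}\varepsilon_{n}\rightarrow0$; if $d=p$ then $P=I_{d}$, $h$ is constant in $v$, and $\int vK(v)\,dv=0$ by Condition~\ref{cond:kernel_prop}(i); and if the kernel covariance is proportional to $A(\theta_{0})$ then, after whitening by $A(\theta_{0})^{-1/2}$, the kernel is spherically symmetric, so the component of $v$ on which $\beta_{0}v$ depends is independent of the one on which $h$ depends and symmetry of the kernel annihilates the term. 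In each case $a_{n}(\theta_{\varepsilon}-\theta_{0})\rightarrow\beta_{0}\widehat{s}_{\infty}$ with $\widehat{s}_{\infty}\sim N\{0,A(\theta_{0})\}$, so, since $\beta_{0}A(\theta_{0})\beta_{0}^{\T}=I(\theta_{0})^{-1}$, the limit is $N\{0,I(\theta_{0})^{-1}\}$ and $I_{ABC}(\theta_{0})=I(\theta_{0})$.

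In the remaining regime $c_{\varepsilon}=\infty$ the posterior concentrates on the coarser scale $\Theta(\varepsilon_{n})$, on which its limiting shape $K\{Ds(\theta_{0})t\}$ is symmetric with mean zero; the $\Theta(a_{n}^{-1})$ displacement of the posterior mean then comes from the first-order Taylor correction to the kernel about $Ds(\theta_{0})t$, which is linear in $\widehat{s}$, and carrying it through the ratio together with an integration-by-parts identity for $\int t\,D_{v}K\{Ds(\theta_{0})t\}\,dt$ gives $a_{n}(\theta_{\varepsilon}-\theta_{0})\rightarrow B\widehat{s}_{\infty}$ for a fixed $p\times d$ matrix $B$ with $B\,Ds(\theta_{0})=I_{p}$; the intermediate regime $c_{\varepsilon}\in(0,\infty)$ combines the two expansions. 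Thus in every case the limit is Gaussian, $N\{0,BA(\theta_{0})B^{\T}\}$, so $I_{ABC}(\theta_{0})^{-1}=BA(\theta_{0})B^{\T}$. The comparison now follows from a Gauss--Markov argument: the constraint $B\,Ds(\theta_{0})=I_{p}$ is the Fisher-consistency of the linear estimator $\theta_{0}+B\{s_{\rm obs}-s(\theta_{0})\}$ under the first-order model $s(\theta)-s(\theta_{0})\approx Ds(\theta_{0})(\theta-\theta_{0})$, and among all such $B$ the matrix $BA(\theta_{0})B^{\T}$ is minimized, in the positive-semidefinite order, by the generalized-least-squares choice $B=\beta_{0}=I(\theta_{0})^{-1}Ds(\theta_{0})^{\T}A(\theta_{0})^{-1}$, with minimum $\beta_{0}A(\theta_{0})\beta_{0}^{\T}=I(\theta_{0})^{-1}$, as seen by writing $B=\beta_{0}+E$ with $E\,Ds(\theta_{0})=0$ and using $BA(\theta_{0})B^{\T}=I(\theta_{0})^{-1}+EA(\theta_{0})E^{\T}$. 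Hence $I_{ABC}(\theta_{0})^{-1}-I(\theta_{0})^{-1}$ is positive semidefinite, i.e.\ $I(\theta_{0})-I_{ABC}(\theta_{0})$ is semi-positive definite, with equality exactly when $B=\beta_{0}$, which is exactly when $\varepsilon_{n}=o(a_{n}^{-1})$, or $d=p$, or the kernel covariance is proportional to $A(\theta_{0})$.

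I expect the main obstacle to be the regime $c_{\varepsilon}=\infty$ (and its neighbour $c_{\varepsilon}\in(0,\infty)$): there the $\Theta(a_{n}^{-1})$ displacement of the posterior mean is a genuinely higher-order effect, so the integrand must be expanded one order beyond its leading term, and one must check that after division by the comparably small denominator exactly one term linear in $\widehat{s}$ survives, while simultaneously showing that the Edgeworth remainder of Condition~\ref{cond:sum_approx}, the quadratic Taylor remainder of $s(\theta)$, and the first-order variation of $\pi$ and $A$ are all of strictly smaller order. This bookkeeping is exactly what pins down the threshold $\varepsilon_{n}=o(a_{n}^{-3/5})$; a slower rate would leave a non-negligible, and in general non-linear, contribution to the posterior mean.
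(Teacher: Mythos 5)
The paper itself gives no proof of this proposition: it is imported verbatim as Theorem 3.1 of \cite{Li/Fearnhead:2018}, and the only trace of its argument in this paper is the expansion recorded in Lemma \ref{lem:ABC_probability_expand}(c), namely $\ttheta_{\varepsilon}=\theta_{0}+a_{n}^{-1}\beta_{0}A(\theta_{0})^{1/2}W_{{\rm obs}}+\varepsilon_{n}\beta_{0}E_{G_{n}}(v)+o_{p}(a_{n}^{-1})$. Your key display is exactly this expansion: your $h(v)K(v)$ is, up to normalisation, the paper's $G_{n}(v)$, and your $\widehat{s}$ is $A(\theta_{0})^{1/2}W_{{\rm obs}}$. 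Your treatment of the three equality cases is correct ($a_{n}\varepsilon_{n}\rightarrow0$; $P=I_{d}$ when $d=p$ so $h$ is constant and Condition \ref{cond:kernel_prop}(i) kills the bias; and the symmetry argument when the kernel covariance is proportional to $A(\theta_{0})$), and the Gauss--Markov computation $BA(\theta_{0})B^{\T}=I(\theta_{0})^{-1}+EA(\theta_{0})E^{\T}$ for $B=\beta_{0}+E$ with $EDs(\theta_{0})=0$ is a clean way to obtain the positive-semidefinite ordering once the limit is known to have the form $B\widehat{s}_{\infty}$. One small imprecision: for a general elliptical kernel the whitened components $Pu$ and $(I-P)u$ are \emph{not} independent; what you need, and what holds, is that the conditional law of $Pu$ given $(I-P)u$ is symmetric about zero.

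The genuine gap is the step you dispose of in one clause: that in the remaining regimes $a_{n}\varepsilon_{n}\beta_{0}E_{G_{n}}(v)$ converges to a \emph{linear} function of $\widehat{s}_{\infty}$, so that the limit is Gaussian of the form $B\widehat{s}_{\infty}$ with $BDs(\theta_{0})=I_{p}$. This is precisely the nontrivial content of the cited theorem, not a formality. When $c_{\varepsilon}\in(0,\infty)$, $d>p$ and the kernel covariance is not proportional to $A(\theta_{0})$, $E_{G}(v)$ is by construction a nonlinear functional of the component of $Z$ orthogonal to the column space of $A(\theta_{0})^{-1/2}Ds(\theta_{0})$, since the tilting weight $h$ enters the normalising constant as well as the numerator; linearity has to be extracted by a further argument (for instance a change of variables transferring the dependence on $W_{{\rm obs}}$ into a shift of the kernel, followed by a Taylor expansion, which is where $\varepsilon_{n}=o(a_{n}^{-3/5})$ and Condition \ref{cond:sum_approx} do their work). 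You flag this as ``the main obstacle'' for $c_{\varepsilon}=\infty$, but the same issue is present, and arguably sharper, for $c_{\varepsilon}\in(0,\infty)$; without it the claimed Gaussian limit, and hence the very definition of $I_{ABC}(\theta_{0})$, is not established. Deferring that step to \cite{Li/Fearnhead:2018} is reasonable for a cited result, but your sketch should not present the linearity as a consequence of the completed-square expansion alone.
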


Proposition \ref{thm:ABC_mean_convergence} helps us to compare the frequentist variability
in the posterior mean of approximate Bayesian computation with the asymptotic posterior distribution given
in Proposition  \ref{thm:ABC_bad_convergence}.
If $\varepsilon_{n}=o(a_{n}^{-1})$ then the posterior
distribution is  asymptotically normal with
variance matrix $a_{n}^{-2}I(\theta_{0})^{-1}$, and the posterior mean is
also asymptotically normal with the same variance matrix. These results are identical
to those we would get for the true posterior and posterior mean given the summary. 
%We will say that $\varepsilon_{n}$ is negligible in this case.

%Second, $\varepsilon_{n}$ is negligible only when $\varepsilon_{n}=o(a_{n}^{-1})$.
For an $\varepsilon_{n}$ which is the same order as $a_{n}^{-1}$, the uncertainty in approximate Bayesian computation has rate $a_{n}^{-1}$. 
However the limiting posterior distribution, which is a convolution of the true limiting posterior given the summary
with the kernel, will overestimate the uncertainty by a constant factor. 
If $\varepsilon_{n}$ decreases slower than $a_{n}^{-1}$, the posterior contracts at a rate $\varepsilon_{n}$,
and thus will over-estimate the actual uncertainty by a factor that diverges as $n\rightarrow 0$.
%and loses the information about the summary variance $A(\theta)$
%therefore the zero-$\varepsilon$ uncertainty is overestimated with
%a larger rate. 
%An intuitive explanation for this last case is that,
%with $\varepsilon_{n}$ much larger than the uncertainty of the summary
%statistic we will have a non-negligible chance of accepting $\theta$ values
%that are with $\varepsilon_{n}$ of the true parameter value, as for these 
%alues 

%, whether a sample is accepted or not almost only depends
%the difference between $s(\theta)$ and $s(\theta_{0})$ and
%the kernel density. 
In summary, it is much easier to get approximate Bayesian computation to accurately 
estimate the posterior mean. This is possible 
with $\varepsilon_{n}$ as large
as $o(a_{n}^{-3/5})$ if the dimension of the summary statistic equals that of the parameter. However, accurately estimating the 
posterior variance, or getting the posterior to accurately reflect the 
uncertainty in the parameter, is much harder. As commented in Section 1, this
is only possible for values of $\varepsilon_n$ for which the acceptance probability in
a standard algorithm will go to zero as $n$ increases. In this case the Monte Carlo sample size, and hence the computational cost, of
approximate Bayesian computation will have to increase substantially with $n$. 

As one application of our theoretical results, consider observations that are independent and identically distributed from a parametric density $f(\cdot\mid\theta)$. 
One approach to construct the summary statistics is to use the score vector of some tractable approximating auxiliary model evaluated at the maximum auxiliary 
likelihood estimator \cite[]{drovandi2015bayesian}. \cite{ruli2016approximate} constructs an auxiliary model from a composite likelihood, so the auxiliary likelihood for a single observation is 
$\prod_{i\in\mathscr{I}}f(y\in A_i\mid\theta)$ where $\{A_i: i\in\mathscr{I}\}$ is a set of marginal or conditional events for $y$. Denote the auxiliary score vector
for a single observation by $cl_{\theta}(\cdot\mid\theta)$ and the maximum auxiliary likelihood estimator for our data set by $\htheta_{\rm cl}$. Then the summary statistic, $s$, for any pseudo data set 
$\{y_1,\ldots,y_n\}$ is $\sum_{j=1}^n cl_{\theta}(y_j\mid\htheta_{\rm cl})/n$. 

For $y\sim f(\cdot\mid\theta)$, assume the first two moments of $cl_{\theta}(y\mid\theta_0)$ exist and $cl_{\theta}(y\mid\theta)$ is differentiable at $\theta$. 
Let $H(\theta)=E_{\theta}\{\partial cl_{\theta}(y\mid\theta_{0})/\partial\theta\}$ and $J(\theta)=\mbox{var}_{\theta}\{cl_{\theta}(y\mid\theta_{0})\}$. Then if $\htheta_{\rm cl}$ is consistent for $\theta_0$, 
Condition \ref{cond:sum_conv} is satisfied with
\begin{align*}
n^{1/2}[S-E_{\theta}\{cl_{\theta}(Y\mid\theta_0)\}]\rightarrow N\{0,J(\theta)\},\ \ n\rightarrow\infty,
\end{align*}
in distribution, and with $I(\theta_0)=H(\theta_0)^TJ(\theta_0)^{-1}H(\theta_0)$.

Our results show that the posterior mean of approximate Bayesian computation, using $\varepsilon_n=O(n^{-1/2})$, will have asymptotic variance $I(\theta_0)^{-1}/n$. This is identical
to the asymptotic variance of the maximum composite likelihood estimator \cite[]{varin2011overview}. Furthermore, the posterior variance
will overestimate this just by a constant factor. As we show below, using the regression correction of \cite{Beaumont:2002} will correct this overestimation and produce a posterior that correctly quantifies
the uncertainty in our estimates.  

An alternative approach to construct an approximate posterior using composite likelihood is to use the product of the prior and the composite likelihood. In general, this leads to a poorly calibrated
posterior density which substantially underestimates uncertainty \cite[]{ribatet2012bayesian}. Adjustment of the composite likelihood is needed to obtain calibration, but this involves 
estimation of the curvature and the variance of the composite score \cite[]{pauli2011bayesian}. Empirical evidence that approximate Bayesian computation more accurately quantifies uncertainty than alternative 
composite-based posteriors is given in \cite{ruli2016approximate}.

\subsection{Regression Adjusted Approximate Bayesian Computation}\label{adjABC_posterior}

% WHAY $N_{\varepsilon}$ RATHER THAN $N$?

The regression adjustment of \cite{Beaumont:2002} involves post-processing the output of approximate Bayesian computation to 
try to improve the resulting approximation to the true posterior.
Below we will denote a sample from the posterior of approximate Bayesian computation by $\{(\theta_{i},s_{i})\}_{i=1,\ldots,N}$.
Under the regression adjustment, we obtain a new posterior sample by using $\{\theta_{i}-\hbeta_{\varepsilon}(s_{i}-s_{{\rm obs}})\}_{i=1,\ldots,N}$
where $\hbeta_{\varepsilon}$ is the least square estimate of the
coefficient matrix in the linear model 
\begin{align*}
\theta_{i} & =\alpha+\beta(s_{i}-s_{{\rm obs}})+e_{i},\quad i=1,\ldots,N,
\end{align*}
where $e_{i}$ are independent identically distributed errors. 
%In \cite{nott2014approximate} 
%it is noted that the first two moments of the regression
%adjusted sample are Monte Carlo approximations of the adjusted
%mean and variance in a Bayes linear analysis \cite[]{goldstein2007bayes}.

%By considering the adjustment of Bayes linear analysis as a computational
%tool to improve the ABC posterior, the regression adjustment can be
%formulated in the following framework. 
%Suppose the unknown parameter, $\theta$, follows a probability
%density $p(\theta)$ and the data, $s$, follows the conditional
%density $p(s\mid\theta)$. Given an observation $s_{{\rm obs}}$, define
%the linearly adjusted distribution of $\theta$ to be the distribution
%of $a+Bs_{{\rm obs}}+e,$ where $e$ is the residual of the model
%$\theta=a+Bs+e$. If the linear estimate $a+Bs$ is optimal,
%in the sense of minimizing the square error loss $E[\|\theta-a-Bs\|^{2}]$,
%the linearly adjusted distribution is called the \textit{regression
%adjusted distribution}, and is the distribution of $\theta-B_{opt}(s-s_{{\rm obs}})$, where $(a_{opt},B_{opt})$ are the values that minimise $E[\|\theta-a-Bs\|^{2}]$. 

We can view the adjusted sample as follows. Define a constant, $\alpha_{\varepsilon}$, and a vector $\beta_{\varepsilon}$ as
%For the posterior distribution in approximate Bayesian computation, $(\theta,s)\sim\pi_{\varepsilon}(\theta,s\mid s_{{\rm obs}})$,
%and the regression adjusted posterior distribution is the distribution
%of $\theta-\beta_{\varepsilon}(s-s_{{\rm obs}})$, denoted by $\theta^{*}$, where 
\[
(\alpha_{\varepsilon},\beta_{\varepsilon})=\underset{\alpha,\beta}{\arg\min}\ E_{\varepsilon}[\|\theta-\alpha-\beta(s-s_{{\rm obs}})\|^{2}\mid s_{{\rm obs}}],
\]
where expectation is with respect the joint posterior distribution of $(\theta,s)$ given by approximate Bayesian computation.  Then the ideal adjusted posterior is
the distribution of $\theta^*=\theta-\beta_{\varepsilon}(s-s_{{\rm obs}})$ where $(\theta,s) \sim \pi_{\varepsilon}(\theta,s)$. The density of $\theta^*$ is
\[
 \pi_{\varepsilon}^*(\theta^{*}\mid s_{{\rm obs}})=\int_{\mathbb{R}^{d}}\pi_{\varepsilon}\{\theta^{*}+\beta_{\varepsilon}(s-s_{{\rm obs}}),s \mid s_{{\rm obs}}\}\,ds
\]
and the sample we get from regression-adjusted approximate Bayesian computation is a draw from $\pi_{\varepsilon}^*(\theta^{*}\mid s_{{\rm obs}})$ but with $\beta_{\varepsilon}$ replaced by
its estimator.

The variance of $\pi^*_{\varepsilon}(\theta^{*}\mid s_{{\rm obs}})$ is strictly smaller than that of $\pi_{\varepsilon}(\theta\mid s_{{\rm obs}})$
provided $s$ is correlated with $\theta$. The following results, which are analogous
to Propositions \ref{thm:ABC_bad_convergence} and \ref{thm:ABC_mean_convergence}, show 
that this reduction in variation is by the correct amount to make the resulting adjusted posterior correctly quantify the posterior uncertainty.

\begin{theorem}
\label{thm:ABC_good_convergence}Assume Conditions \ref{cond:par_true}--\ref{cond:likelihood_moments}.
Denote the mean of $\pi^*_{\varepsilon}(\theta^{*}\mid s_{{\rm obs}})$ by
$\theta_{\varepsilon}^{*}$. As $n\rightarrow\infty$, if $\varepsilon_{n}=o(a_{n}^{-3/5})$,
\begin{align*}
 & \sup_{A\in\mathscr{B}^p}\left|\Pi_{\varepsilon}\{a_{n}(\theta^{*}-\theta_{\varepsilon}^{*})\in A\mid s_{{\rm obs}}\}-\int_{A}N\{t;0,I(\theta_{0})^{-1}\}\,dt\right|\rightarrow 0,
\end{align*}
in probability, and $a_{n}(\text{\ensuremath{\theta_{\varepsilon}^{*}}}-\theta_{0})\rightarrow N\{0,I(\theta_{0})^{-1}\}$ in distribution. Moreover, if $\beta_{\varepsilon}$ is replaced by $\tbeta_{\varepsilon}$ satisfying $a_n\varepsilon_n(\tbeta_{\varepsilon}-\beta_{\varepsilon})=o_p(1)$, the above results still hold. 
\end{theorem}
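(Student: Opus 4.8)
The plan is to write the ideal adjusted posterior as an explicit affine functional of the joint approximate Bayesian computation posterior of $(\theta,s)$ and then transport to it the limit theory already developed for that joint law. Throughout I would use the rescaled coordinates $u=a_n(\theta-\theta_0)$ and $v=\varepsilon_n^{-1}(s-s_{\rm obs})$, write $s_{\rm obs}=s(\theta_0)+a_n^{-1}A(\theta_0)^{1/2}Z_n$ with $Z_n\rightarrow Z\sim N(0,I_d)$, and replace $f_n(s\mid\theta)$ by $\tilde{f}_n(s\mid\theta)=N\{s;s(\theta),A(\theta)/a_n^2\}$; the error in this replacement, together with the quadratic Taylor remainder of $s(\theta)$ about $\theta_0$, is negligible under $\varepsilon_n=o(a_n^{-3/5})$ by the same estimates as in the proof of Proposition \ref{thm:ABC_bad_convergence} using Conditions \ref{cond:sum_approx}--\ref{cond:sum_approx_tail}. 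After these steps the exponent of the joint density in the $(u,v)$ coordinates is, up to $o_p(1)$, $-\tfrac12\|Z_n+a_n\varepsilon_n A(\theta_0)^{-1/2}v-A(\theta_0)^{-1/2}Ds(\theta_0)u\|^2$, so the joint posterior of $(u,v)$ converges to the explicit limit underlying Proposition \ref{thm:ABC_bad_convergence}.

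Second, I would identify the population regression coefficient. By the least-squares characterization of $(\alpha_\varepsilon,\beta_\varepsilon)$, the adjusted posterior mean is $\theta_\varepsilon^*=\alpha_\varepsilon$ and $\beta_\varepsilon=\mbox{Cov}_\varepsilon(\theta,s)\{\mbox{Var}_\varepsilon(s)\}^{-1}$, so $a_n(\theta^*-\theta_\varepsilon^*)$ equals $a_n$ times the least-squares residual, which in $(u,v)$ coordinates is $(u-\mu_{u,n})-B_n(v-\mu_{v,n})$, where $B_n=a_n\varepsilon_n\beta_\varepsilon$ is the regression matrix of $u$ on $v$ and $\mu_{u,n},\mu_{v,n}$ are the posterior means of $u,v$. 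The key lemma is that $a_n\varepsilon_n(\beta_\varepsilon-\beta_0)\rightarrow0$ with $\beta_0=I(\theta_0)^{-1}Ds(\theta_0)^{T}A(\theta_0)^{-1}$; this is proved by showing that the first two joint $(u,v)$-moments under the approximate Bayesian computation posterior converge to those of the limiting law, for which Condition \ref{cond:likelihood_moments} guarantees these moments exist and the exponential-tail Conditions \ref{cond:sum_approx}--\ref{cond:sum_approx_tail} supply the uniform integrability needed to pass to the limit. I expect this moment-convergence step to be the main obstacle, especially when $c_\varepsilon=\lim a_n\varepsilon_n=\infty$, where $u$ itself diverges at rate $a_n\varepsilon_n$ and only the residual is tight, so the argument must be run in terms of ratios of moments rather than the moments themselves.

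Third, with $B_n=a_n\varepsilon_n\beta_0+o(1)$ in hand I would substitute $t=u-a_n\varepsilon_n\beta_0 v$ into the limiting exponent. Setting $M=A(\theta_0)^{-1/2}Ds(\theta_0)$, so that $I(\theta_0)=M^{T}M$, one checks that $A(\theta_0)^{-1/2}Ds(\theta_0)\beta_0=PA(\theta_0)^{-1/2}$ with $P=M(M^{T}M)^{-1}M^{T}$ the orthogonal projection onto the column space of $M$; hence the exponent splits as $-\tfrac12\|(I-P)Z_n+c_\varepsilon(I-P)A(\theta_0)^{-1/2}v\|^2-\tfrac12\|PZ_n-Mt\|^2$, the first term free of $t$ and the second free of $v$. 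Integrating out $v$ therefore leaves, for $a_n(\theta^*-\theta_0)$, a density proportional to $\exp\{-\tfrac12(t-\hat{t}_n)^{T}I(\theta_0)(t-\hat{t}_n)\}$ with $\hat{t}_n=I(\theta_0)^{-1}Ds(\theta_0)^{T}A(\theta_0)^{-1/2}Z_n$; equivalently, the adjustment with coefficient $\beta_0$ exactly cancels the shift $c_\varepsilon\beta_0\{v-E_G(v)\}$ appearing in Proposition \ref{thm:ABC_bad_convergence}(ii) and collapses that mixture to a single Gaussian. Since $\hat{t}_n\rightarrow N\{0,I(\theta_0)^{-1}\}$ in distribution, this yields $a_n(\theta_\varepsilon^*-\theta_0)\rightarrow N\{0,I(\theta_0)^{-1}\}$ for the mean; and after recentring by $\theta_\varepsilon^*$ the limiting density is the fixed Gaussian $N\{t;0,I(\theta_0)^{-1}\}$, whose non-randomness lets me strengthen weak convergence to convergence in probability in $L^1(\mathbb{R}^p)$ by the Scheff\'e-type argument used for Proposition \ref{thm:ABC_bad_convergence}. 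The case $d=p$ is the specialization $P=I_d$.

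Finally, for the robustness claim, replacing $\beta_\varepsilon$ by $\tbeta_\varepsilon$ changes $a_n(\theta^*-\theta_0)$ by $a_n(\tbeta_\varepsilon-\beta_\varepsilon)(s-s_{\rm obs})=\{a_n\varepsilon_n(\tbeta_\varepsilon-\beta_\varepsilon)\}v$, which is $o_p(1)$ because $a_n\varepsilon_n(\tbeta_\varepsilon-\beta_\varepsilon)=o_p(1)$ by assumption and $v$ is tight under the reweighted posterior; the same bound controls the change in $\theta_\varepsilon^*$, so none of the limits is affected.
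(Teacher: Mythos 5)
Your proposal is correct and rests on the same two pillars as the paper's proof: the Gaussian approximation of the summary likelihood in the rescaled coordinates, and the key expansion $a_n\varepsilon_n(\beta_\varepsilon-\beta_0)=o_p(1)$ derived from the first two joint posterior moments of $(\theta,s)$ — this is exactly the paper's Lemma \ref{lem:reg_coef}, and you correctly flag it as the place where Condition \ref{cond:likelihood_moments} and the tail conditions do the work (the constraint $\varepsilon_n=o(a_n^{-3/5})$ enters through the $O_p(\alpha_n^{-1})$ and $O_p(a_n^{2}\varepsilon_n^{4})$ remainders in those moments). Where you genuinely differ is in how the cancellation is executed. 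The paper writes the adjusted posterior as a mixture over $v$ of true posteriors $\tPi\{\theta\in A+\varepsilon_n\beta_\varepsilon v\mid s_{\rm obs}+\varepsilon_n v\}$ and invokes the misspecified Bernstein--von Mises result (Lemma \ref{lem:misspecified_posterior_limit}): the conditional mean $\beta_0\{A(\theta_0)^{1/2}W_{\rm obs}+c_\varepsilon v\}$ is shifted by the adjustment so that only $\{a_n\varepsilon_n(\beta_0-\beta_\varepsilon)+o(1)\}\{v-E_{G_n}(v)\}=o_p(1)$ survives, with the $c_\varepsilon=\infty$ case treated separately. You instead substitute $t=u-a_n\varepsilon_n\beta_0 v$ directly into the joint quadratic form and split it orthogonally via $P=M(M^{T}M)^{-1}M^{T}$ with $M=A(\theta_0)^{-1/2}Ds(\theta_0)$; your identity $M\beta_0=PA(\theta_0)^{-1/2}$ is correct, the factorization into a $t$-only factor $\exp\{-\tfrac12\|PZ_n-Mt\|^{2}\}$ and a $v$-only factor is exact, and it handles all three regimes of $c_\varepsilon$ in one computation — arguably cleaner than the paper's case analysis, at the price of not being able to reuse Lemma \ref{lem:misspecified_posterior_limit} off the shelf. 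Both routes need the same supporting estimates, which you delegate appropriately: the $O_p(\alpha_n^{-1})$ control of $f_n-\widetilde{f}_n$, the truncation to $B_\delta$, and the equivalence of $\theta_\varepsilon^{*}$ with its Gaussian-model counterpart needed for the total-variation (sup over $A$) statement — these are the paper's Lemmas \ref{lem:adjABC_probability_expand} and \ref{lem:normal_leading_for_scaled_theta} — plus the passage from the change of variables with $\beta_0$ to the actual adjustment with $\beta_\varepsilon$ (and then $\tbeta_\varepsilon$), which costs only $a_n\varepsilon_n(\beta_\varepsilon-\beta_0)v=o_p(1)$ against a tight $v$, as you note. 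Your identification of the limit of the mean via $\hat t_n=I(\theta_0)^{-1}Ds(\theta_0)^{T}A(\theta_0)^{-1/2}Z_n$ agrees with Lemma \ref{lem:adjABC_probability_expand}(c).
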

The limit of the regression adjusted posterior distribution is the true posterior given the summary provided
$\varepsilon_{n}$ is $o(a_{n}^{-3/5})$. This is a slower rate than that at which the posterior contracts, which, as we will show in the next section, has important consequences
in terms of the computational efficiency of approximate Bayesian computation.
The regression adjustment corrects both the additional noise
of the  posterior mean when $d>p$ and the overestimated uncertainty
of the  posterior. 
This correction comes from the removal
of the first order bias caused by $\varepsilon$. 
\cite{blum2010approximate} shows that the regression adjustment reduces the
bias of approximate Bayesian computation when $E(\theta\mid s)$ is linear
and the residuals $\theta-E(\theta\mid s)$ are homoscedastic. 
Our results do not require these assumptions, and suggest
that the regression adjustment should be applied routinely with approximate Bayesian computation provided the coefficients $\beta_{\varepsilon}$ can be
estimated accurately.

With the simulated sample, $\beta_{\varepsilon}$ is estimated by $\hbeta_{\varepsilon}$. The accuracy of $\hbeta_{\varepsilon}$
can be seen by the following decomposition,
\begin{align*}
\hbeta_{\varepsilon} & =\mbox{cov}_N(s,\theta)\mbox{var}_N(s)^{-1}\\
 & =\beta_{\varepsilon}+\frac{1}{a_{n}\varepsilon_{n}}\mbox{cov}_N\left\{\frac{s-s_{\varepsilon}}{\varepsilon_{n}},a_{n}(\theta^{*}-\theta_{\varepsilon}^{*})\right\}
 \mbox{var}_N\left(\frac{s-s_{\varepsilon}}{\varepsilon_{n}}\right)^{-1},
\end{align*}
where $\mbox{cov}_N$ and $\mbox{var}_N$ are the sample covariance
and variance matrices, and $s_\epsilon$ is the sample mean. Since $\mbox{cov}(s,\theta^{*})=0$ and the distributions of $s-s_{\varepsilon}$ and $\theta^{*}-\theta_{\varepsilon}^{*}$ contract at 
rates $\varepsilon_n$ and $a_n^{-1}$ respectively, the error $\hbeta_{\varepsilon}-\beta_{\varepsilon}$ can be shown to have the rate $O_p\{(a_{n}\varepsilon_{n})^{-1}N^{-1/2}\}$ as $n\rightarrow\infty$ 
and $N\rightarrow\infty$. We omit the proof, since it is tedious and similar to the proof of the asymptotic expansion of $\beta_{\varepsilon}$ in Lemma \ref{lem:reg_coef}. 
Thus, if $N$ increases to infinity with $n$, $\hbeta_{\varepsilon}-\beta_{\varepsilon}$ will be $o_p\{(a_n\varepsilon_n)^{-1}\}$ and the convergence of Theorem \ref{thm:ABC_good_convergence} will hold instead. 

Alternatively we can get an idea of the additional error for large $N$ from the following proposition.
\begin{proposition}\label{prop:ABC_good_convergence}
Assume Conditions \ref{cond:par_true}--\ref{cond:likelihood_moments}. Consider $\theta^{*}=\theta-\hbeta_{\varepsilon}(s-s_{{\rm obs}})$. As $n\rightarrow\infty$, if $\varepsilon_{n}=o(a_{n}^{-3/5})$ and $N$ is large enough, for any $A\in\mathscr{B}^{p}$,
\begin{align*}
\Pi_{\varepsilon}\{a_{n}(\theta^{*}-\theta_{\varepsilon}^{*})\in A\mid s_{{\rm obs}}\}\rightarrow\int_{A}\psi(t)\,dt,
\end{align*}
in distribution, where 
\begin{align*}
\psi(t)\propto & \int_{\mathbb{R}^{d}}N\left[t;\frac{\eta}{N^{1/2}}\{v-E_{G}(v)\},I(\theta_{0})^{-1}\right]G(v)\,dv,
\end{align*}
when $c_{\varepsilon}<\infty$,  
\begin{align*}
\psi(t)\propto & \int_{\mathbb{R}^{p}}N\left\{ t;\frac{\eta}{N^{1/2}}Ds(\theta_{0})t',I(\theta_{0})^{-1}\right\} K\{Ds(\theta_{0})t'\}\,dt',
\end{align*}
when $c_{\varepsilon}=\infty$, and $\eta=O_p(1)$.
\end{proposition}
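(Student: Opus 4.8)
\medskip\noindent The plan is to run the argument behind Theorem \ref{thm:ABC_good_convergence}, but to keep rather than discard the Monte Carlo error $\hbeta_{\varepsilon}-\beta_{\varepsilon}$, carrying it through at order $N^{-1/2}$. Condition on the size-$N$ sample used to form $\hbeta_{\varepsilon}$, so that $\hbeta_{\varepsilon}$ is fixed and $\Pi_{\varepsilon}\{\cdot\mid s_{\rm obs}\}$ refers to the law of a draw $(\theta,s)\sim\pi_{\varepsilon}$ independent of that sample. Write $\ttheta^{*}=\theta-\beta_{\varepsilon}(s-s_{\rm obs})$ for the ideally adjusted variable (the one called $\theta^{*}$ in Theorem \ref{thm:ABC_good_convergence}), with posterior mean $\ttheta^{*}_{\varepsilon}$. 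Since $\theta^{*}=\ttheta^{*}-(\hbeta_{\varepsilon}-\beta_{\varepsilon})(s-s_{\rm obs})$ and the means satisfy $\theta^{*}_{\varepsilon}=\ttheta^{*}_{\varepsilon}-(\hbeta_{\varepsilon}-\beta_{\varepsilon})(s_{\varepsilon}-s_{\rm obs})$ with $s_{\varepsilon}=E_{\varepsilon}(s\mid s_{\rm obs})$,
\begin{align*}
a_{n}(\theta^{*}-\theta^{*}_{\varepsilon})=a_{n}(\ttheta^{*}-\ttheta^{*}_{\varepsilon})-\{a_{n}\varepsilon_{n}(\hbeta_{\varepsilon}-\beta_{\varepsilon})\}\,\frac{s-s_{\varepsilon}}{\varepsilon_{n}},
\end{align*}
and by the algebraic identity displayed before the proposition $a_{n}\varepsilon_{n}(\hbeta_{\varepsilon}-\beta_{\varepsilon})=C_{N}V_{N}^{-1}$, where $C_{N}=\mbox{cov}_{N}\{(s-s_{\varepsilon})/\varepsilon_{n},\,a_{n}(\ttheta^{*}-\ttheta^{*}_{\varepsilon})\}$ and $V_{N}=\mbox{var}_{N}\{(s-s_{\varepsilon})/\varepsilon_{n}\}$ are formed from the $N$ draws, the sample and population centrings of $s$ differing at a negligible order. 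So everything reduces to the joint behaviour, under $\pi_{\varepsilon}$, of $a_{n}(\ttheta^{*}-\ttheta^{*}_{\varepsilon})$, of $(s-s_{\varepsilon})/\varepsilon_{n}$, and of the sample moments $C_{N},V_{N}$ built from independent copies of the first two.

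First I would extract, from the proof of Theorem \ref{thm:ABC_good_convergence} together with Proposition \ref{thm:ABC_bad_convergence}, the joint weak limit under $\pi_{\varepsilon}$ of the pair $\{a_{n}(\ttheta^{*}-\ttheta^{*}_{\varepsilon}),\,(s-s_{\varepsilon})/\varepsilon_{n}\}$: it is $(T_{1},T_{2})$ with $T_{1}\sim N\{0,I(\theta_{0})^{-1}\}$ \emph{independent} of $T_{2}$, where $T_{2}$ has the centred density $G$ of Proposition \ref{thm:ABC_bad_convergence}(ii) when $c_{\varepsilon}<\infty$, and $T_{2}=Ds(\theta_{0})t'$ with $t'$ the weak limit of $\varepsilon_{n}^{-1}(\theta-\theta_{\varepsilon})$, of density proportional to $K\{Ds(\theta_{0})t'\}$, when $c_{\varepsilon}=\infty$; recall that the adjustment removes the $\varepsilon$-order bias, so $\ttheta^{*}$ contracts at rate $a_{n}^{-1}$ in all cases. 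The asymptotic independence is precisely the population orthogonality $\mbox{cov}(s,\ttheta^{*})=0$ that defines $\beta_{\varepsilon}$, and when $c_{\varepsilon}\in(0,\infty)$ the law of $G$ still carries the randomness of $Z$, so this convergence is in distribution. Next, because $\mbox{cov}(s,\ttheta^{*})=0$ holds exactly, $E(C_{N})=0$, whence $C_{N}=O_{p}(N^{-1/2})$; letting $n\to\infty$ for fixed $N$ and using uniform integrability of the relevant products — a consequence of the exponential tails in Conditions \ref{cond:sum_approx}--\ref{cond:sum_approx_tail} and of Condition \ref{cond:likelihood_moments} — shows $N^{1/2}C_{N}$ converges in distribution while $V_{N}$ converges to the positive-definite covariance of $T_{2}$, so that $\eta:=\lim_{n}N^{1/2}a_{n}\varepsilon_{n}(\hbeta_{\varepsilon}-\beta_{\varepsilon})=\lim_{n}N^{1/2}C_{N}V_{N}^{-1}$ is a well-defined random $p\times d$ matrix with $\eta=O_{p}(1)$; taking $N$ large enough guarantees $V_{N}$ is invertible with high probability and controls the remainders dropped from the expansion of $\hbeta_{\varepsilon}$, just as in Lemma \ref{lem:reg_coef}.

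Finally I would assemble the pieces. The fresh draw $(\theta,s)$ is independent of the $N$-sample, so from the displayed decomposition $a_{n}(\theta^{*}-\theta^{*}_{\varepsilon})$ converges jointly with $\eta$ to $T_{1}-N^{-1/2}\eta T_{2}$, with $T_{1}\sim N\{0,I(\theta_{0})^{-1}\}$ independent of $T_{2}$ and of $\eta$. Conditioning on $\eta$ and $T_{2}$ gives a $N\{N^{-1/2}\eta T_{2},\,I(\theta_{0})^{-1}\}$ law — the sign of $\eta$ being immaterial because $\eta\stackrel{d}{=}-\eta$ by symmetry of $T_{1}$ — and integrating $T_{2}$ against its density yields the stated mixtures: $\psi(t)\propto\int_{\mathbb{R}^{d}}N[t;N^{-1/2}\eta\{v-E_{G}(v)\},I(\theta_{0})^{-1}]G(v)\,dv$ when $c_{\varepsilon}<\infty$, and, after changing variables from $v=Ds(\theta_{0})t'$ to $t'$ (the mean $E\{Ds(\theta_{0})t'\}$ vanishing by symmetry of $K$), $\psi(t)\propto\int_{\mathbb{R}^{p}}N\{t;N^{-1/2}\eta Ds(\theta_{0})t',I(\theta_{0})^{-1}\}K\{Ds(\theta_{0})t'\}\,dt'$ when $c_{\varepsilon}=\infty$; the convergence is in distribution, jointly with $\eta_{n}:=N^{1/2}a_{n}\varepsilon_{n}(\hbeta_{\varepsilon}-\beta_{\varepsilon})\to\eta$.

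I expect the main obstacle to be the two ingredients of the second paragraph: establishing the joint weak convergence of $\{a_{n}(\ttheta^{*}-\ttheta^{*}_{\varepsilon}),\,(s-s_{\varepsilon})/\varepsilon_{n}\}$ under $\pi_{\varepsilon}$ with its asymptotic independence, which must be dug out of the proof of Theorem \ref{thm:ABC_good_convergence} rather than its statement; and upgrading this to \emph{convergence of the relevant second moments}, not merely of the law, so that the triangular-array averages $C_{N}$ and $V_{N}$ have the claimed limits. This uniform-integrability step, resting on the Edgeworth-type bound in Condition \ref{cond:sum_approx} and the exponential tails of Condition \ref{cond:sum_approx_tail}, together with controlling the near-degeneracy of $V_{N}$ and the higher-order remainder from the nonlinearity of $s(\theta)$, is where the real work lies; the remaining dependence bookkeeping parallels the proof of Theorem \ref{thm:ABC_good_convergence} and Lemma \ref{lem:reg_coef} and is routine.
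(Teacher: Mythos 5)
Your proposal is correct and follows essentially the same route as the paper: both isolate the coefficient-estimation error via $\eta_n=N^{1/2}a_n\varepsilon_n(\hbeta_\varepsilon-\beta_\varepsilon)=O_p(1)$ (using the displayed decomposition of $\hbeta_\varepsilon$ and the omitted rate $O_p\{(a_n\varepsilon_n)^{-1}N^{-1/2}\}$) and then carry it through the proof of Theorem \ref{thm:ABC_good_convergence} as an extra $O_p(N^{-1/2})$ shift of the conditional mean, which the paper implements by replacing $\mu_n^*(v)$ with $\hmu_n^*(v)=\mu_n^*(v)+N^{-1/2}\eta_n\{v-E_{G_n}(v)\}$ in \eqref{eq:adjABC_posterior_leading} and invoking Lemmas \ref{lem:ABC_likelihood_expansion} and \ref{lem:continuity_of_integrals} with the continuous mapping theorem. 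Your random-variable formulation (joint weak limit of the pair with asymptotic independence, then conditioning on $\eta$ and the $v$-marginal) is an equivalent restatement of the paper's density-level argument, and the sign ambiguity in $\eta$ you flag is immaterial since the proposition only asserts $\eta=O_p(1)$.
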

The limiting distribution here can be viewed as the convolution of the limiting distribution obtained when the optimal coefficients are used and that of a random variable, which
relates to the error in our estimate of $\beta_{\varepsilon}$, and that is $O_p(N^{-1/2})$. 
%So for large $N$, the approximation error of $\Pi_{\varepsilon}(\theta\in A\mid s_{{\rm obs}})$ will only increase by a constant factor $1+O(1/N)$ 

\subsection{Acceptance Rates when $\varepsilon$ is Negligible}\label{ABCacc_rate}

Finally we present results for the acceptance probability of approximate Bayesian computation, the quantity that
is central to the computational cost of importance sampling or Markov chain Monte Carlo-based algorithms.
We consider a set-up where we 
propose the parameter value from a location-scale family. That is, we can write
the proposal density as the density of a random variable, $\sigma_{n}X+\mu_{n}$,
where $X\sim q(\cdot)$, $E(X)=0$ and $\sigma_n$ and $\mu_n$ are constants that can depend on $n$. The average acceptance
probability $p_{{\rm acc},q}$ would then be 
\begin{align*}
& \int_{\mathcal{P}\times\mathbb{R}^{d}}q_{n}(\theta)f_{n}(s\mid\theta)K\{\varepsilon_{n}^{-1}(s-s_{{\rm obs}})\}\,d s d\theta,
\end{align*}
where $q_{n}(\theta)$ is the density of $\sigma_{n}X+\mu_{n}$. This covers the proposal distribution in fundamental sampling algorithms, including the random-walk Metropolis algorithm 
and importance sampling with unimodal proposal distribution, and serves as the building block for many advanced algorithms where the proposal distribution is a mixture of distributions from 
location-scale families, such as iterative importance sampling-type algorithms. 
%Besides, it enables concise characterizations of acceptance rates in different regimes to reflect the intuition that non-degenerating acceptance rates require the probability mass of the posterior to cover a non-negligible area of that of the proposal distribution.

We further assume that $\sigma_{n}(\mu_{n}-\theta_{0})=O_{p}(1)$, which means $\theta_{0}$ is in the coverage of $q_{n}(\theta)$. This is a
natural requirement for any good proposal distribution.
The prior distribution and $\theta_{0}$ as a point mass are included
in this proposal family. This condition would also apply to many Markov chain Monte Carlo implementations
of approximate Bayesian computation after convergence.
%, as at stationarity $\theta$ values would be proposed from the posterior distribution. 

As above, define $a_{n,\varepsilon}=a_{n}\mathbbm{1}_{c_{\varepsilon}<\infty}+\varepsilon_{n}^{-1}\mathbbm{1}_{c_{\varepsilon}=\infty}$ to be the smaller of $a_{n}$ and $\varepsilon_{n}^{-1}$.
Asymptotic results for $p_{{\rm acc},q}$
when $\sigma_{n}$ has the same rate as $a_{n,\varepsilon}^{-1}$ are given in \cite{Li/Fearnhead:2018}. Here we extend those results to other regimes.
\begin{theorem}
\label{thm:acceptance_rate}Assume the conditions of Proposition \ref{thm:ABC_bad_convergence}. As $n\rightarrow\infty$, if $\varepsilon_{n}=o(a_{n}^{-1/2})$:
%the following convergence holds depending on values of $c_{\varepsilon}$ and $\sigma_{n}$. 
(i) if $c_{\varepsilon}=0$ or $\sigma_{n}/a_{n,\varepsilon}^{-1}\rightarrow\infty$, then $p_{{\rm acc},q}\rightarrow0$ in probability; 
(ii) if $c_{\varepsilon}\in(0,\infty)$ and $\sigma_{n}/a_{n,\varepsilon}^{-1}\rightarrow r_{1}\in[0,\infty)$,
or $c_{\varepsilon}=\infty$ and $\sigma_{n}/a_{n,\varepsilon}^{-1}\rightarrow r_{1}\in(0,\infty)$,
then $p_{{\rm acc},q}=\Theta_{p}(1)$; 
 (iii) if $c_{\varepsilon}=\infty$ and $\sigma_{n}/a_{n,\varepsilon}^{-1}\rightarrow0$,
then $p_{{\rm acc},q}\rightarrow1$ in probability.
\end{theorem}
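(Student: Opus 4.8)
The plan is to reduce $p_{{\rm acc},q}$ to an explicit integral over $\theta$ and then pass to the limit by dominated convergence after rescaling to the proposal's own scale. First I would discard $\theta\in\mathcal{P}_0^c$: by Condition \ref{cond:sum_conv}(ii) and $s_{\rm obs}\to s(\theta_0)$, $\|s(\theta)-s_{\rm obs}\|$ is bounded below there, so $K\{\varepsilon_n^{-1}(s-s_{\rm obs})\}$ is supported where $\|s-s(\theta)\|\gtrsim\varepsilon_n^{-1}$, and the uniform exponential tail bound of Condition \ref{cond:sum_approx_tail}(ii), with $A(\theta)$ bounded on $\mathcal{P}$, makes this contribution super-polynomially small and negligible in every regime. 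On $\mathcal{P}_0$ I replace $f_n(s\mid\theta)$ by $\widetilde{f}_n(s\mid\theta)=N\{s;s(\theta),A(\theta)/a_n^2\}$ using Condition \ref{cond:sum_approx}; integrating the remainder $c_3\alpha_n^{-1}r_{\rm max}$ against $q_n(\theta)K\{\varepsilon_n^{-1}(s-s_{\rm obs})\}$ shows it is of smaller order than the leading term, while the condition $\varepsilon_n=o(a_n^{-1/2})$, equivalently $a_n\varepsilon_n^2\to0$, is what makes the remaining bandwidth-induced bias and the quadratic Taylor remainder of $s(\cdot)$ over the effective support of $\theta$ negligible in the acceptance integral, paralleling the analysis in \cite{Li/Fearnhead:2018}.

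Integrating out $s$ then leaves $p_{{\rm acc},q}\approx\int q_n(\theta)\,\overline K_n(\theta)\,d\theta$ with $\overline K_n(\theta)=\int\widetilde{f}_n(s\mid\theta)K\{\varepsilon_n^{-1}(s-s_{\rm obs})\}\,ds$, an explicit Gaussian--kernel convolution. Rescaling $s-s_{\rm obs}$ by $\varepsilon_n$ when $c_\varepsilon<\infty$ gives $\overline K_n(\theta)=(a_n\varepsilon_n)^{d}g_n\{a_n(s(\theta)-s_{\rm obs})\}$ for a bounded $g_n$ that, by continuity of $A(\cdot)$, converges to a positive integrable function; rescaling instead by $a_n^{-1}$ when $c_\varepsilon=\infty$ and using $a_n\varepsilon_n\to\infty$ gives $\overline K_n(\theta)=K\{\varepsilon_n^{-1}(s(\theta)-s_{\rm obs})\}\{1+o(1)\}$ uniformly, the narrow Gaussian averaging $K$ over a vanishing window.

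Substituting $\theta=\mu_n+\sigma_n x$ turns this into $\int q(x)\,\overline K_n(\mu_n+\sigma_n x)\,dx$; expanding $s(\mu_n+\sigma_n x)-s_{\rm obs}=Ds(\theta_0)\{(\mu_n-\theta_0)+\sigma_n x\}-\{s_{\rm obs}-s(\theta_0)\}+o(\cdot)$, using $a_n(s_{\rm obs}-s(\theta_0))=O_p(1)$ with weak limit $A(\theta_0)^{1/2}Z$ (the noise of Proposition \ref{thm:ABC_bad_convergence}) and the centring assumption on $\mu_n$ (limit $\xi$ of $\sigma_n^{-1}(\mu_n-\theta_0)$), the argument of $g_n$ or of $K$ converges, for each fixed $x$, to a function of $(\sigma_n/a_{n,\varepsilon}^{-1})Ds(\theta_0)(x+\xi)-A(\theta_0)^{1/2}Z$. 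Dominated convergence — the dominating function being $q(x)$ times the integrable tail bound from Conditions \ref{cond:kernel_prop}(iv) and \ref{cond:sum_approx_tail}(i) — then gives the three cases. If $\sigma_n/a_{n,\varepsilon}^{-1}\to r_1\in(0,\infty)$, the limit is $c_\varepsilon^{d}\int q(x)g\{r_1Ds(\theta_0)(x+\xi)-A(\theta_0)^{1/2}Z\}\,dx$ when $c_\varepsilon<\infty$, or $\int q(x)K\{r_1Ds(\theta_0)(x+\xi)\}\,dx$ when $c_\varepsilon=\infty$, each a.s.\ positive and finite, so $p_{{\rm acc},q}=\Theta_p(1)$, which is the regime already covered by \cite{Li/Fearnhead:2018}. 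If $c_\varepsilon=\infty$ and $\sigma_n/a_{n,\varepsilon}^{-1}\to0$, the argument of $K$ tends to $0$ for every $x$, so $K\to K(0)=1$ and the limit is $\int q(x)\,dx=1$; since $p_{{\rm acc},q}\le1$ and the convergence is uniform over a large compact set of $x$ with probability close to one, this is convergence in probability. If $c_\varepsilon=0$ (so $(a_n\varepsilon_n)^{d}\to0$) or $\sigma_n/a_{n,\varepsilon}^{-1}\to\infty$, then either the prefactor vanishes or, by the sub-exponential decay of $g_n$ and of $K$, the argument of $g_n$ or of $K$ diverges for almost every $x$, whence $p_{{\rm acc},q}\to0$ in probability; for very diffuse proposals the cruder bound $p_{{\rm acc},q}\le\|q\|_\infty\sigma_n^{-p}\int\!\!\int f_n(s\mid\theta)K\{\varepsilon_n^{-1}(s-s_{\rm obs})\}\,ds\,d\theta$ suffices.

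The main obstacle will be making the approximations uniform over the unbounded range of $\theta$: one must exhibit a single $x$-integrable dominating function for $\overline K_n(\mu_n+\sigma_n x)$ valid across all three regimes, which forces one to combine the exponential tail of $f_{W_n}$ with the kernel tail bound. A second, related, difficulty is the case $d>p$, where $Ds(\theta_0)x$ spans only a $p$-dimensional subspace of $\mathbb{R}^d$, so $g_n$ and the limiting integrals must be resolved along $\mathrm{range}\,Ds(\theta_0)$ and its orthogonal complement separately; this is the projection phenomenon responsible for the form of $\psi$ in Proposition \ref{thm:ABC_bad_convergence}(ii), and is why the random noise $Z$ survives in the limiting constant of case (ii).
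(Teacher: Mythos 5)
Your argument is correct and follows essentially the same route as the paper's proof: truncate to a neighbourhood of $\theta_0$ using the tail conditions, replace $f_n(s\mid\theta)$ by its Gaussian approximation via Condition \ref{cond:sum_approx}, rescale to the proposal's own scale $\tilde t=\sigma_n^{-1}(\theta-\mu_n)$, and pass to the limit by a dominated-convergence/continuous-mapping argument (the paper packages this as Lemma \ref{lem:ABC_likelihood_expansion} together with Lemma \ref{lem:continuity_of_integrals}, keeping the joint $(t,v)$ integral rather than first integrating out $s$, and likewise falls back on the crude bound $(r_{n,\varepsilon})^{-p}\sup q\cdot(a_{n,\varepsilon}\varepsilon_n)^d\int g_n=o_p(1)$ for the diffuse-proposal part of case (i)). The only phrasing to tighten is the claim that $\overline K_n(\theta)=K\{\varepsilon_n^{-1}(s(\theta)-s_{\rm obs})\}\{1+o(1)\}$ \emph{uniformly} when $c_\varepsilon=\infty$ — a multiplicative error cannot hold uniformly for compactly supported $K$; an additive $L^1$ statement (or the change of variables $v'=v'(v,t)$ the paper uses) is what is actually needed, and your subsequent pointwise-limit-plus-domination step already supplies it.
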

The proof of Theorem \ref{thm:acceptance_rate} can be found in the Supplementary Material. The underlying intuition is as follows. For the summary
statistic, $s_{}$, sampled with parameter value $\theta$, the
acceptance probability depends on 
\begin{align}
\frac{s_{}-s_{{\rm obs}}}{\varepsilon_{n}} & =\frac{1}{\varepsilon_{n}}[\{s_{}-s(\theta)\}+\{s(\theta)-s(\theta_{0})\}+\{s(\theta_{0})-s_{{\rm obs}}\}],\label{eq:acceptance_rate_explain}
\end{align}
where $s(\theta)$ is the limit of $s_{}$ in Condition \ref{cond:sum_conv}.
The distance between $s_{}$ and $s_{{\rm obs}}$ is at least $O_{p}(a_{n}^{-1})$,
since the first and third bracketed terms are $O_{p}(a_{n}^{-1})$. If $\varepsilon_{n}=o(a_{n}^{-1})$ then,
regardless of the value of $\theta$, \eqref{eq:acceptance_rate_explain}
will blow up as $n\rightarrow\infty$ and hence $p_{{\rm acc},q}$ goes
to $0$. If $\varepsilon_{n}$ decreases with a rate slower than $a_{n}^{-1}$, \eqref{eq:acceptance_rate_explain}
will go to zero providing we have a proposal which ensures that the middle term is $o_{p}(\varepsilon_{n})$, and hence $p_{{\rm acc},q}$ goes
to unity.

Theorem \ref{thm:ABC_bad_convergence} shows that, without the regression adjustment,
approximate Bayesian computation requires $\varepsilon_{n}$ to be $o(a_{n}^{-1})$ if its posterior is to converge to the 
true posterior given the summary. In this case Theorem \ref{thm:acceptance_rate} shows that the acceptance
rate will degenerate to zero as $n\rightarrow\infty$ regardless of the
choice of $q(\cdot)$. %This explains the low acceptance rates
%of ABC method in practice, mentioned in Section 1. In this sense,
%the ABC posterior approximation is computationally inefficient.
%This means that, if regression adjustment is not used, for an accurate approximation most sample generated in ABC will be rejected. 
On the other hand, with the regression adjustment, we can choose
$\varepsilon_{n}=o(a_{n}^{-3/5})$ and still have convergence to the true posterior given the summary. 
For such a choice, if our proposal density satisfies $\sigma_n=o(\varepsilon_{n})$, the acceptance rate will go to unity as $n\rightarrow\infty$. 
%Equivalently, if our proposal density satisfies $\sigma_n=o(a_{n}^{-3/5})$ and $\varepsilon_{n}$ is chosen by specifying the proportion of sample to be accepted, the choice of accepted proportion can go to $1$ with the asymptotic effect of $\varepsilon_{n}$ obtained negligible. This means that, with a good enough proposal density and the regression adjustment, accurate approximation can be achieved with accepting most of the simulated sample.

\section{Numerical Example}
Here we illustrate the gain of computational efficiency from using the regression adjustment on the $g$-and-$k$ distribution, %\cite[]{haynes1998flexible},  
%This is a generalized family providing flexible distributional shapes including asymmetry, shorter and longer tails than the normal distribution, and is 
a popular model for testing approximate Bayesian computation methods 
\cite[e.g.,][]{fearnhead2012constructing,marin2014relevant}. %,mengersen2013bayesian}. 
The data are independent and identically distributed from a distribution defined by its quantile function, 
\begin{align*}
F^{-1}(x;\alpha,\beta,\gamma,\kappa)= & \alpha+\beta\left[1+0.8\frac{1-\exp\{-\gamma z(x)\}}{1+\exp\{-\gamma z(x)\}}\right]\{1+z(x)^{2}\}^{\kappa}z(x),\ \ x\in[0,1],
\end{align*}
where $\alpha$ and $\beta$ are location and scale parameters, $\gamma$ and
$\kappa$ are related to the skewness and kurtosis of the distribution, and $z(x)$ is the corresponding quantile of a standard normal distribution.
No closed form is available for the density but simulating from the
model is straightforward by transforming realisations from the standard normal distribution. 

In the following we assume the parameter vector $(\alpha,\beta,\gamma,\kappa)$ has a uniform prior in $[0,10]^{4}$ and multiple datasets are generated from the model with $(\alpha,\beta,\gamma,\kappa)=(3,1,2,0.5)$.
%, which is the setting in \cite{fearnhead2012constructing}. %The distribution of this setting is positively skewed and has tails heavier than normal distribution. 
To illustrate the asymptotic behaviour of approximate Bayesian computation, $50$ data sets are generated for each of a set of values of $n$ ranging from $500$ to $10,000$. 
Consider estimating the posterior means, denoted by $\mu=(\mu_{1},\ldots,\mu_{4})$,
and standard deviations, denoted by $\sigma=(\sigma_{1},\ldots,\sigma_{4})$, of the parameters. 
The summary statistic is a set of evenly spaced quantiles of dimension $19$. 
%It has a moderate dimension in which case the acceptance probability of ABC methodis usually small. 

The bandwidth is chosen via fixing the proportion of the Monte Carlo sample to be accepted, and the accepted proportions needed to achieve certain approximation accuracy for estimates with and without the adjustment are compared. 
A higher proportion means more simulated parameter values can be kept for inference. The accuracy is measured by the average relative
errors of estimating $\mu$ or $\sigma$, 
\begin{align*}
{\rm RE}_{\mu}=\frac{1}{4}\sum_{k=1}^{4}\frac{\left|\widehat{\mu}_{k}-\mu_{k}\right|}{\mu_{k}},\quad {\rm RE}_{\sigma}=\frac{1}{4}\sum_{k=1}^{4}\frac{\left|\widehat{\sigma}_{k}-\sigma_{k}\right|}{\sigma_{k}},
\end{align*}
for estimators $\hmu=(\hmu_1,\ldots,\hmu_4)$ and $\widehat{\sigma}=(\widehat{\sigma}_1,\ldots,\widehat{\sigma}_4)$. The proposal distribution is normal, with the covariance matrix selected to inflate the posterior covariance matrix by a constant factor $c^{2}$ and the mean vector selected to differ from the posterior mean by half of the
posterior standard deviation, which avoids the case that the posterior mean can be estimated trivially. We consider a series of increasing $c$ in order to investigate the impact of the proposal distribution getting worse. 

Figure \ref{gk_efficiency} shows that the required acceptance rate for the regression adjusted estimates is higher than that for the unadjusted estimates in almost all cases. 
For estimating the posterior mean, the improvement is small. For estimating the posterior standard deviations, 
the improvement is much larger. To achieve each level of accuracy, the acceptance rates of the unadjusted estimates are all close to zero. 
Those of the regression-adjusted estimates are higher by up to two orders of magnitude,
%, as shown in Table \ref{gk_table}, 
so the Monte Carlo sample size needed to achieve the same accuracy can be reduced correspondingly. 

\begin{figure}
\centering
\includegraphics[scale=0.8]{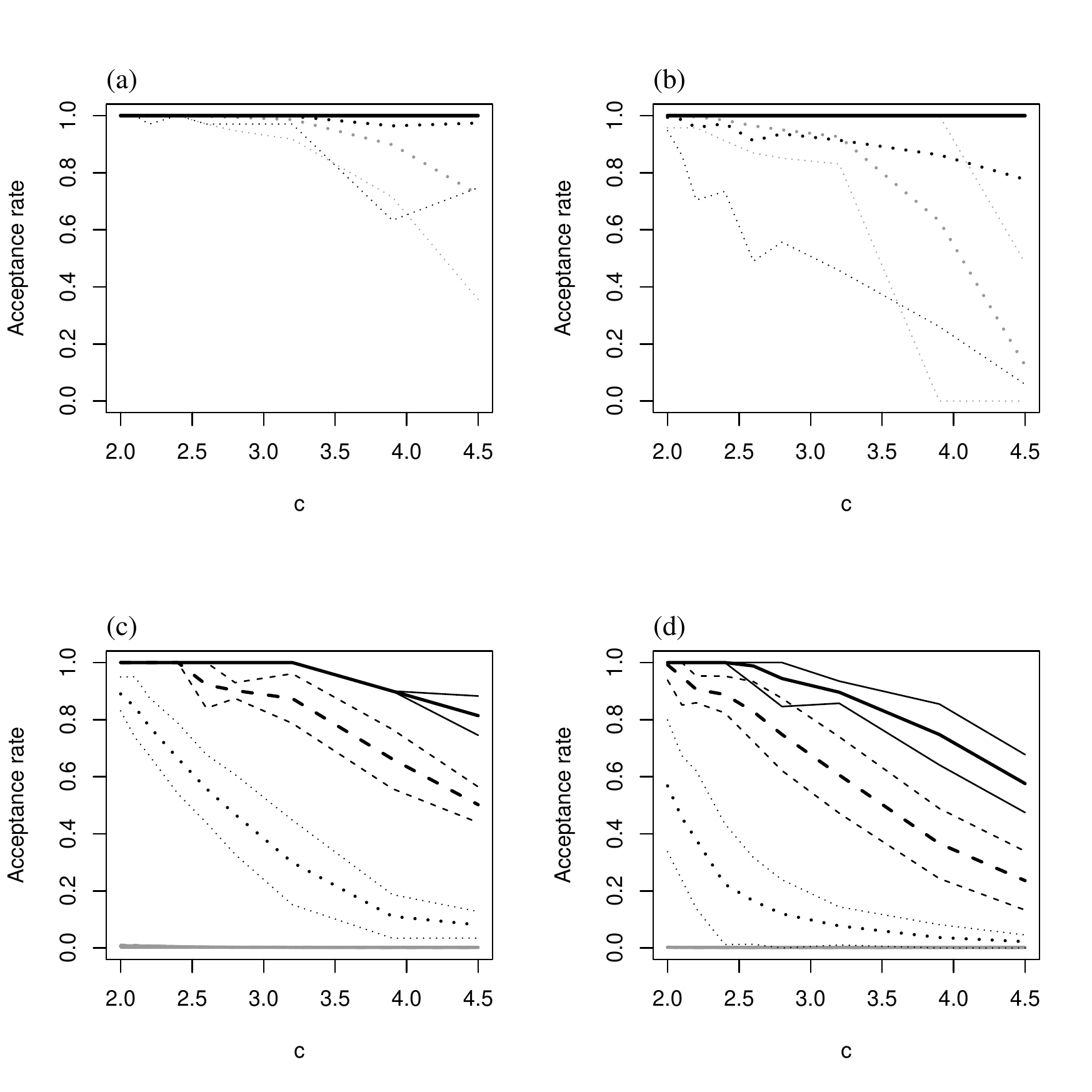}
\caption{
\label{gk_efficiency} Acceptance rates required for different degrees of accuracy of approximate Bayesian computation and different variances of the proposal distribution (which are proportional to $c$). 
In each plot we show results for standard (grey-line) and regression adjusted (black-line) approximate Bayesian computation and for
different values of $n$: $n=500$ (dotted), $n=3,000$ (dashed) and $n=10,000$ (solid). The averages over $50$ data sets (thick) and their $95\%$ confidence intervals (thin) are reported. Results are for a relative error of 0.08 and 0.05 in the posterior mean, in (a) and (b) respectively, and
for a relative error of 0.2 and 0.1 in the posterior standard deviation, in (c) and (d) respectively.}
\end{figure}

\section{Discussion}

One way to implement approximate Bayesian computation so that the acceptance probability tends to unity as $n$ increases is to use importance sampling with a suitable proposal from a location-scale family. 
The key difficulty with finding a suitable proposal is to ensure that the location parameter 
is close to the true parameter, where close means the distance is $O(\varepsilon_n)$. This can be achieved by having a preliminary analysis of the data, and using the point estimate of the parameter 
from this preliminary analysis as the location parameter \cite[]{beaumont2009adaptive,Li/Fearnhead:2018}.

%This is possible for choices of bandwidth such
%that the acceptance probability goes to 1 as $n$ increases. As shown in Section \ref{ABCacc_rate}, one way of achieving this high acceptance rate is to use an importance sampling implementation
%with a proposal distribution from a location-scale family. It is then required that the scale, $\sigma_n$ satisfies $\sigma_n=o(\varepsilon_n)$ and the center $\mu_n$ satisfies 
%$\sigma_n(\mu_n-\theta_0)=O_p(1)$ for $\varepsilon_n$ having the order $o(a_n^{-3/5})$. %In practice, one strategy to construct such a proposal is as following. First find $\mu_n$ with distance 
%to $\theta_0$ of the order of $o(a_n^{-3/5})$, then reduce $\sigma_n$ from some large value that satisfies the second equation in the above to the regime of $o(a_n^{-3/5})$, 
%finally determine $\varepsilon_n$ by specifying the proportion of sample to be accepted. Possible choices of a 
%We can find a suitable $\mu_n$ in various ways, such as 
%good enough 
%using an auxiliary model-based point estimate, the rate of which is $o(a_n^{-1})$ \cite[]{Frigessi:2004}, and optimization-based estimate, such as \cite{meeds2015optimization}, where the point estimate error can be analyzed from the optimization error. 
%The reduction of $\sigma_n$ may be implemented iteratively using, e.g., sequential Monte Carlo algorithms or iterative importance sampling. 

\section*{Acknowledgment}

This work was funded by the Engineering and Physical Sciences Research Council, under the i-like programme grant.

\section*{Supplementary Material}

Proofs of lemmas and Theorem \ref{thm:acceptance_rate} are included in the online supplementary material.

%\begin{table}
%\begin{centering}
%\begin{tabular}{\midc\midc\midc\midc\midc\midc\midc\mid}
%\hline 
%\multirow{2}{*}{$\frac{p_{adj}(RE)}{p_{unadj}(RE)}$} & \multicolumn{3}{c\mid}{$RE_{\mu}=0.08$} & \multicolumn{3}{c\mid}{$RE_{\sigma}=0.2$}\tabularnewline
%\cline{2-7} 
% & $c=2.4$ & $c=3.2$ & $c=4.5$ & $c=2.4$ & $c=3.2$ & $c=4.5$\tabularnewline
%\hline 
%$n=500$ & 1 & 1 & 1.2 & 180 & 150 & 40\tabularnewline
%\hline 
%$n=3000$ & 1 & 1 & 1 & 260 & 380 & 250\tabularnewline
%\hline 
%$n=10000$ & 1 & 1 & 1 & 270 & 480 & 410\tabularnewline
%\hline 
%\end{tabular}
%\par\end{centering}
%\caption{The ratios of acceptance rates for regression adjusted ABC to standard ABC for the top two plots of Figure \ref{gk_efficiency}. }
%\label{gk_table}
%\end{table}

\section*{Appendix}
\addcontentsline{toc}{section}{Appendix}

\subsection*{Proof of Result from Section \ref{ABC_posterior}}

Throughout the data are considered to be random.  For any integer $l>0$ and a set $A\subset\mathbb{R}^{l}$,
we use the convention that $cA+x$ denotes the set $\{ct+x:t\in A\}$
for $c\in\mathbb{R}$ and $x\in\mathbb{R}^{l}$. For a non-negative
function $h(x)$, integrable in $\mathbb{R}^{l}$, denote the normalised
function $h(x)/\int_{\mathbb{R}^{l}}h(x)\,dx$ by $h(x)^{({\rm norm})}$.
For a vector $x$, denote a general polynomial of elements of $x$ with
degree up to $l$ by $P_{l}(x)$. For any fixed $\delta<\delta_{0}$,
let $B_{\delta}$ be the neighborhood $\{\theta:\|\theta-\theta_{0}\|<\delta\}$.
Let $\pi_{\delta}(\theta)$ be $\pi(\theta)$ truncated in $B_{\delta}$.
Recall that $\ftil_{n}(s\mid\theta)$ denotes the normal density with
mean $s(\theta)$ and covariance matrix $A(\theta)/a_{n}^{2}$. Let
$t(\theta)=a_{n,\varepsilon}(\theta-\theta_{0})$ and $v(s)=\varepsilon_{n}^{-1}(s-s_{{\rm obs}})$,
rescaled versions $\theta$ and $s$. For any $A\in\mathscr{B}^{p}$,
let $t(A)$ be the set $\{\phi:\phi=t(\theta)\text{ for some }\theta\in A\}$. 

Define $\tPi_{\varepsilon}(\theta\in A\mid s_{{\rm obs}})$ to be the
normal counterpart of $\Pi_{\varepsilon}(\theta\in A\mid s_{{\rm obs}})$
with truncated prior, obtained by replacing $\pi(\theta)$ and $f_{n}(s\mid\theta)$
in $\Pi_{\varepsilon}$ by $\pi_{\delta}(\theta)$ and $\widetilde{f}_{n}(s\mid\theta)$.
So let 
\[
\tpi_{\varepsilon}(\theta,s\mid s_{{\rm obs}})=\frac{\pi_{\delta}(\theta)\ftil_{n}(s\mid \theta)K\{\varepsilon_{n}^{-1}(s-s_{{\rm obs}})\}}{\int_{B_{\delta}}\int_{\mathbb{R}^{d}}\pi_{\delta}(\theta)\ftil_{n}(s\mid \theta)K\{\varepsilon_{n}^{-1}(s-s_{{\rm obs}})\}\,d\theta ds},
\]
$\tpi_{\varepsilon}(\theta\mid s_{{\rm obs}})=\int_{\mathbb{R}^{d}}\tpi_{\varepsilon}(\theta,s\mid s_{{\rm obs}})\,ds$
and $\tPi_{\varepsilon}(\theta\in A\mid s_{{\rm obs}})$ be the distribution
function with density $\tpi_{\varepsilon}(\theta\mid s_{{\rm obs}})$.
Denote the mean of $\tPi_{\varepsilon}$ by $\ttheta_{\varepsilon}$. Let $W_{{\rm obs}}=a_{n}A(\theta_{0})^{-1/2}\{s_{{\rm obs}}-s(\theta_{0})\}$
and $\beta_{0}=I(\theta_{0})^{-1}Ds(\theta_{0})^{T}A(\theta_{0})^{-1}$.
By Condition \ref{cond:sum_conv}, $W_{{\rm obs}}\rightarrow Z$ in
distribution as $n\rightarrow\infty$, where $Z\sim N(0,I_{d})$.

Since the approximate Bayesian computation likelihood within $\tPi_{\varepsilon}$ is an incorrect
model for $s_{{\rm obs}}$, standard posterior convergence results
do not apply. However, if we condition on the value of the summary, $s$, then the 
distribution of $\theta$ is just the true posterior given $s$. Thus we can express the posterior from approximate Bayesian computation as a continuous mixture of these true posteriors.
Let $\tpi_{\varepsilon,tv}(t,v)=a_{n,\varepsilon}^{-d}\pi_{\delta}(\theta_{0}+a_{n,\varepsilon}^{-1}t)\ftil_{n}(s_{{\rm obs}}+\varepsilon_{n}v \mid \theta_{0}+a_{n,\varepsilon}^{-1}t)K(v)$.
For any $A\in\mathscr{B}^{p}$, we rewrite $\tPi_{\varepsilon}$ as, 
\begin{equation}
\tPi_{\varepsilon}(\theta\in A\mid s_{{\rm obs}})=\int_{\mathbb{R}^{d}}\int_{t(B_{\delta})}\tPi(\theta\in A\mid s_{{\rm obs}}+\varepsilon_{n}v)\tpi_{\varepsilon,tv}(t,v)^{({\rm norm})}\,dtdv,\label{eq:ABC_posterior_altform1}
\end{equation}
where $\tPi(\theta\in A\mid s)$ is the posterior distribution with prior
$\pi_{\delta}(\theta)$ and likelihood $\widetilde{f}_{n}(s\mid\theta)$.
%Although for any $v\neq0$, the likelihood within $\tPi(\theta\in A\mids_{{\rm obs}}+\varepsilon_{n}v)$
%is misspecified for $s_{{\rm obs}}+\varepsilon_{n}v$, with results

Using results from \cite{kleijn2012bernstein}, the leading term of $\tPi(\theta\in A\mid s_{{\rm obs}}+\varepsilon_{n}v)$
can be obtained and is stated in the following lemma.
\begin{lemma}\label{lem:misspecified_posterior_limit}Assume Conditions \ref{cond:sum_conv} and \ref{cond:sum_approx}. If $\varepsilon_{n}=O(a_{n}^{-1})$, for any fixed $v\in\mathbb{R}^{d}$ and small enough $\delta$,
\[
\sup_{A\in\mathscr{B}^{p}}\left|\tPi\{a_{n}(\theta-\theta_{0})\in A\mid s_{{\rm obs}}+\varepsilon_{n}v\}-\int_{A}N[t;\beta_{0}\{A(\theta_{0})^{1/2}W_{{\rm obs}}+c_{\varepsilon}v\},I(\theta_{0})^{-1}]\,dt\right|\rightarrow0,
\]
in probability as $n\rightarrow\infty$.
\end{lemma}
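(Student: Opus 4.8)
The plan is to treat $\tPi\{a_n(\theta-\theta_0)\in A\mid s_{{\rm obs}}+\varepsilon_n v\}$ as a genuine Bayesian posterior under the \emph{Gaussian working model} $\ftil_n(s\mid\theta)=N\{s;s(\theta),A(\theta)/a_n^2\}$, observed at the (generally misspecified, since the true data-generating mechanism need not be this Gaussian and the "observation" is offset by $\varepsilon_n v$) data point $s=s_{{\rm obs}}+\varepsilon_n v$, with the truncated prior $\pi_\delta$. Because the model is smooth and the prior is $C^2$ and positive at $\theta_0$ by Condition \ref{cond:par_true}, this is exactly the setting of the misspecified Bernstein--von Mises theorem of \cite{kleijn2012bernstein}: the posterior concentrates at the pseudo-true value (the minimiser of the Kullback--Leibler divergence between the sampling distribution of $s_{{\rm obs}}+\varepsilon_n v$ and $\ftil_n(\cdot\mid\theta)$), is asymptotically Gaussian after rescaling by $a_n$, and has limiting covariance given by the sandwich formula, which here collapses to $I(\theta_0)^{-1}$ because the working-model score and information coincide with $Ds(\theta_0)^TA(\theta_0)^{-1}$ and $I(\theta_0)$ respectively.

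First I would write down the log-likelihood $\ell_n(\theta)=\log\ftil_n(s_{{\rm obs}}+\varepsilon_n v\mid\theta)$ explicitly and Taylor-expand it in $\theta$ about $\theta_0$, using $s(\theta),A(\theta)\in C^1(\mathcal{P}_0)$ from Condition \ref{cond:sum_conv}(i). The gradient at $\theta_0$ is, to leading order, $a_n^2 Ds(\theta_0)^TA(\theta_0)^{-1}\{s_{{\rm obs}}+\varepsilon_n v-s(\theta_0)\}=a_n Ds(\theta_0)^TA(\theta_0)^{-1/2}W_{{\rm obs}}+a_n^2\varepsilon_n Ds(\theta_0)^TA(\theta_0)^{-1}v$, and the negative Hessian is $a_n^2 I(\theta_0)+o(a_n^2)$; the $A(\theta)$-dependence of the Hessian contributes only lower-order terms since $s_{{\rm obs}}+\varepsilon_n v-s(\theta_0)=O_p(a_n^{-1})$ when $\varepsilon_n=O(a_n^{-1})$. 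Completing the square gives a Gaussian in $a_n(\theta-\theta_0)$ centred at $I(\theta_0)^{-1}\{Ds(\theta_0)^TA(\theta_0)^{-1/2}W_{{\rm obs}}+a_n\varepsilon_n Ds(\theta_0)^TA(\theta_0)^{-1}v\}=\beta_0\{A(\theta_0)^{1/2}W_{{\rm obs}}+c_\varepsilon v\}+o_p(1)$, recalling $\beta_0=I(\theta_0)^{-1}Ds(\theta_0)^TA(\theta_0)^{-1}$ and $c_\varepsilon=\lim a_n\varepsilon_n$; this identifies the claimed mean and covariance. Then I would invoke the Bernstein--von Mises machinery to upgrade this heuristic completion-of-the-square to a statement in total variation: establish a local asymptotic normality expansion of $\ell_n$, a tail/consistency bound ensuring the posterior mass outside a shrinking $a_n^{-1}$-neighbourhood of $\theta_0$ is negligible (here the prior truncation to $B_\delta$ and the identifiability Condition \ref{cond:sum_conv}(ii) do the work, with $\delta$ chosen small), and combine these via the standard argument to conclude $\sup_{A}|\tPi\{a_n(\theta-\theta_0)\in A\mid\cdot\}-\int_A N[t;\beta_0\{A(\theta_0)^{1/2}W_{{\rm obs}}+c_\varepsilon v\},I(\theta_0)^{-1}]\,dt|\to 0$ in probability.

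The main obstacle is controlling the misspecification uniformly well enough that the remainder terms in the expansion are $o_p(1)$ \emph{after} multiplying by $a_n^2$ inside the exponent --- i.e., verifying that the error from replacing $f_{W_n}$ by $\tilde f_{W_n}$ and from the $\varepsilon_n v$ offset does not shift the posterior centre by more than $o_p(a_n^{-1})$ or distort the limiting shape. This is precisely where Condition \ref{cond:sum_approx} enters: the bound $\sup_\theta\alpha_n|f_{W_n}(w\mid\theta)-\tilde f_{W_n}(w\mid\theta)|\le c_3 r_{\max}(w)$ with $\alpha_n/a_n^{2/5}\to\infty$ guarantees the density discrepancy is small enough, relative to $a_n$, that it is absorbed into the remainder; one must track how this feeds through the likelihood ratio over the $a_n^{-1}$-ball and check the resulting contribution to both the location and the normalising constant is negligible. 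A secondary technical point is the $v$-dependence: the lemma is stated for \emph{fixed} $v$, so no uniformity in $v$ is needed here, which simplifies matters considerably --- the uniform-in-$v$ version is handled later when \eqref{eq:ABC_posterior_altform1} is integrated. I would therefore state the LAN expansion with an explicit $o_p(1)$ remainder whose bound depends on $v$ only through a fixed constant, invoke \cite{kleijn2012bernstein} for the conversion to total-variation convergence of the rescaled posterior, and identify the limiting parameters by the completion-of-the-square computation sketched above.
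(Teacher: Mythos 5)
Your proposal is correct and follows essentially the same route as the paper: both verify the two hypotheses of the misspecified Bernstein--von Mises theorem of Kleijn and van der Vaart --- a local asymptotic normality expansion of the Gaussian working log-likelihood ratio obtained by Taylor-expanding $s(\theta)$ and $A(\theta)$, together with an $a_n$-rate posterior consistency bound --- and identify $\Delta_{n,\theta_0}=\beta_0\{A(\theta_0)^{1/2}W_{\rm obs}+c_\varepsilon v\}$ and $V_{\theta_0}=I(\theta_0)$ exactly as you describe. One small correction of emphasis: since $\tPi$ is built entirely from $\ftil_n$, the LAN expansion requires no appeal to Condition \ref{cond:sum_approx}; that condition enters only in the consistency step, where the expectation of the posterior tail mass is taken under the true density $f_n(\cdot\mid\theta_0)$ of $s_{\rm obs}$ and must be transferred to $\ftil_n(\cdot\mid\theta_0)$ at cost $O(\alpha_n^{-1})$.
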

This leading term is Gaussian, but with a mean that depends on $v$. Thus asymptotically, the posterior of approximate Bayesian computation is the distribution of the 
sum of a Gaussian random variable and $\beta_0c_{\varepsilon} V$, where $V$ has density proportional to $\int \tpi_{\varepsilon,tv}(t,v)\,dt.$

To make this argument rigorous, and to find the distribution of this sum of random variables we need to introduce several functions that relate to the limit of
$\tpi_{\varepsilon,tv}(t',v)$.
%in \eqref{eq:ABC_posterior_altform1},
%several functions are introducted in the following for its expansion.
For a rank-$p$ $d\times p$ matrix $A$, a rank-$d$ $d\times d$
matrix $B$ and a $d$-dimensional vector $c$, define $g(v;A,B,c)=\exp[-(c+Bv)^{T}\{I-A(A^{T}A)^{-1}A^{T}\}(c+Bv)/2]/(2\pi)^{(d-p)/2}$. Let 
\[
g_{n}(t,v)=\begin{cases}
N\left\{ Ds(\theta_{0})t;a_{n}\varepsilon_{n}v+A(\theta_{0})^{1/2}W_{{\rm obs}},A(\theta_{0})\right\} K(v), & \ c_{\varepsilon}<\infty,\\
N\left\{ Ds(\theta_{0})t;v+\frac{1}{a_{n}\varepsilon_{n}}A(\theta_{0})^{1/2}W_{{\rm obs}},\frac{1}{a_{n}^{2}\varepsilon_{n}^{2}}A(\theta_{0})\right\} K(v), & \ c_{\varepsilon}=\infty,
\end{cases}
\]
$G_{n}(v)$ be $g\{v;A(\theta_{0})^{-1/2}Ds(\theta_{0}),a_{n}\varepsilon_{n}A(\theta_{0})^{-1/2},W_{{\rm obs}}\}K(v)$,
and $E_{G_{n}}(\cdot)$ be the expectation under the density $G_{n}(v)^{({\rm norm})}$.
In both cases it is straightforward to show that $\int_{\mathbb{R}^{p}}g_{n}(t,v)\,dt=|A(\theta_{0})|^{-1/2}G_{n}(v)$.
Additionally, for the case $c_{\varepsilon}=\infty$, define $v'(v,t)=A(\theta_{0})^{1/2}W_{{\rm obs}}+a_{n}\varepsilon_{n}v-a_{n}\varepsilon_{n}Ds(\theta_{0})t$
and 
\begin{align*}
g'_{n}(t,v')=N\{v';0,A(\theta_{0})\}K\left\{ Ds(\theta_{0})t+\frac{1}{a_{n}\varepsilon_{n}}v'-\frac{1}{a_{n}\varepsilon_{n}}A(\theta_{0})^{1/2}W_{{\rm obs}}\right\} .
\end{align*}
Then with the transformation $v'=v'(v,t)$, $g_{n}(t,v)dv=g_{n}'(t,v')dv'$.
Let 
\begin{align*}
g(t,v) & =\begin{cases}
N\{Ds(\theta_{0})t;c_{\varepsilon}v+A(\theta_{0})^{1/2}Z,A(\theta_{0})\}K(v), & \ c_{\varepsilon}<\infty,\\
K\{Ds(\theta_{0})t\}N\{v;0,A(\theta_{0})\}, & \ c_{\varepsilon}=\infty,
\end{cases}
\end{align*}
$G(v)$ be $g\{v;A(\theta_{0})^{-1/2}Ds(\theta_{0}),c_{\varepsilon}A(\theta_{0})^{-1/2},Z\}K(v)$
and $E_{G}(\cdot)$ be the expectation under the density $G(v)^{({\rm norm})}$.
When $c_{\varepsilon}<\infty$, $\int_{\mathbb{R}^{p}}g(t,v)\,dt=|A(\theta_{0})|^{-1/2}G(v)$. 

Expansions of $\int_{\mathbb{R}^{d}}\int_{t(B_{\delta})}\tpi_{\varepsilon,tv}(t,v)dtdv$
are given in the following lemma. 

\begin{lemma}\label{lem:ABC_likelihood_expansion} Assume Conditions \ref{cond:prior_regular}--\ref{cond:sum_conv}. If $\varepsilon_{n}=o(a_{n}^{-1/2})$, then $\int_{\mathbb{R}^{d}}\int_{t(B_{\delta})}\left|\tpi_{\varepsilon,tv}(t,v)-\pi(\theta_{0})g_{n}(t,v)\right|\,dtdv\rightarrow0$
in probability and $\int_{\mathbb{R}^{d}}\int_{t(B_{\delta})}g_{n}(t,v)\,dtdv=\Theta_{p}(1)$, as $n\rightarrow\infty$.
Furthermore, for $l\leq6$, $\int_{\mathbb{R}^{d}}\int_{t(B_{\delta})}P_{l}(v)g_{n}(t,v)\,dtdv$
converges to $|A(\theta_{0})|^{-1/2}\int_{\mathbb{R}^{d}}P_{l}(v)G(v)\,dv$ in distribution
when $c_{\varepsilon}<\infty$ and converges to $\int_{\mathbb{R}^{p}}P_{l}\{Ds(\theta_{0})t\}K\{Ds(\theta_{0})t\}\,dt$
in probability when $c_{\varepsilon}=\infty$, as $n\rightarrow\infty$.
\end{lemma}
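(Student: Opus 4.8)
\emph{Proof plan.} The plan is to establish the three assertions separately. For the $L^{1}$ statement I would use a first-order Taylor expansion of $\pi$, $s$ and $A$ about $\theta_{0}$ to identify $\pi(\theta_{0})g_{n}(t,v)$ as the pointwise limit of $\tpi_{\varepsilon,tv}(t,v)$, and then a generalised dominated convergence argument; for the two integral statements I would carry out the Gaussian integration over $t$ in closed form and pass to the limit. Concretely, writing $\theta=\theta_{0}+a_{n,\varepsilon}^{-1}t$ (so $\theta\in B_{\delta}\subset\mathcal{P}_{0}$ on $t(B_{\delta})$), Condition \ref{cond:prior_regular} gives $\pi_{\delta}(\theta)=\pi(\theta_{0})+O(a_{n,\varepsilon}^{-1}\|t\|)$, Condition \ref{cond:sum_conv}(i) gives $A(\theta)=A(\theta_{0})+O(a_{n,\varepsilon}^{-1}\|t\|)$, and the fundamental theorem of calculus together with the uniform continuity of $Ds$ on $\overline{B_{\delta}}$ gives $\|R_{n}(t)\|\le a_{n,\varepsilon}^{-1}\|t\|\,\omega(a_{n,\varepsilon}^{-1}\|t\|)$ for $R_{n}(t)=s(\theta)-s(\theta_{0})-a_{n,\varepsilon}^{-1}Ds(\theta_{0})t$ and a modulus of continuity $\omega$. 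Substituting these into $\ftil_{n}(s_{{\rm obs}}+\varepsilon_{n}v\mid\theta)=N\{s_{{\rm obs}}+\varepsilon_{n}v;s(\theta),A(\theta)/a_{n}^{2}\}$ and using $a_{n}\{s_{{\rm obs}}-s(\theta_{0})\}=A(\theta_{0})^{1/2}W_{{\rm obs}}$, the exponent and prefactor rearrange into $\pi(\theta_{0})g_{n}(t,v)$ (the $a_{n,\varepsilon}^{-d}$ prefactor being exactly what cancels the $a_{n}^{d}$, and, when $c_{\varepsilon}=\infty$, the extra $(a_{n}\varepsilon_{n})^{-d}$, emerging from $\ftil_{n}$) plus an error that, for fixed $(t,v)$, tends to zero in probability. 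When $c_{\varepsilon}=\infty$ the rescaled remainder is multiplied by the divergent factor $a_{n}\varepsilon_{n}$, so before taking limits I would change the integration variable to $v'=A(\theta_{0})^{1/2}W_{{\rm obs}}+a_{n}\varepsilon_{n}v-a_{n}\varepsilon_{n}Ds(\theta_{0})t$, as in the definition of $g'_{n}$; in these coordinates the normal factor is in standard form and the remainder enters only through $K$, divided by $\varepsilon_{n}$ rather than multiplied by $a_{n}$, with $\|R_{n}(t)\|/\varepsilon_{n}\le\|t\|\,\omega(\varepsilon_{n}\|t\|)\to0$. The bandwidth hypothesis $\varepsilon_{n}=o(a_{n}^{-1/2})$ is what makes these rescaled remainders negligible.

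To upgrade pointwise convergence to convergence in $L^{1}$ I would, on the event $\{\|W_{{\rm obs}}\|\le M\}$ (of probability close to one by Condition \ref{cond:sum_conv}), construct an integrable majorant of both $\tpi_{\varepsilon,tv}$ and $\pi(\theta_{0})g_{n}$ on $t(B_{\delta})\times\mathbb{R}^{d}$ that is uniform in $n$, and then invoke the generalised dominated convergence theorem. The majorant combines boundedness of $\pi_{\delta}$ and of $|A(\theta)|^{1/2}$ and $|A(\theta)|^{-1/2}$ on $\overline{B_{\delta}}$; the fact that for $\delta$ small $\|A(\theta)^{-1}-A(\theta_{0})^{-1}\|$ and $a_{n,\varepsilon}\|R_{n}(t)\|/\|t\|$ are uniformly small, so that the Gaussian exponent is bounded above by a $(1-O(\delta))$ multiple of the quadratic form appearing in the exponent of $g_{n}$ plus lower-order terms; and the full column rank of $Ds(\theta_{0})$ from Condition \ref{cond:sum_conv}(iii), which makes this Gaussian decay like $e^{-c\|t\|^{2}}$ for fixed $v$, with its $t$-integral growing at most polynomially in $\|v\|$ and hence controlled after multiplication by $K(v)$ using Condition \ref{cond:kernel_prop}(ii),(iv). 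The residual integral over $t(B_{\delta})^{c}$ is negligible for the same Gaussian reason. I expect this step to be the main obstacle: $t(B_{\delta})$ expands with $n$, the remainder bounds are pointwise rather than uniform in $t$, and the $c_{\varepsilon}=\infty$ regime needs the change of variables above, so producing a single clean, uniformly integrable majorant valid across all regimes is where the bookkeeping concentrates.

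For the remaining two claims, the integral $\int_{\mathbb{R}^{p}}g_{n}(t,v)\,dt$ is computed in closed form by completing the square in $t$ against $Ds(\theta_{0})^{T}A(\theta_{0})^{-1}Ds(\theta_{0})=I(\theta_{0})$, giving a deterministic constant multiple of $G_{n}(v)$; replacing $\mathbb{R}^{p}$ by $t(B_{\delta})$ changes this by a negligible Gaussian tail. Thus $\int_{\mathbb{R}^{d}}\int_{t(B_{\delta})}g_{n}\,dtdv$ equals, up to constants, $\int_{\mathbb{R}^{d}}G_{n}(v)\,dv$, which is at most $(2\pi)^{-(d-p)/2}\int K(v)\,dv<\infty$ because the Gaussian factor of $G_{n}$ is bounded by one, and at least a positive constant by restricting to a small ball of $v$ where $K>0$ and, on $\{\|W_{{\rm obs}}\|\le M\}$, the Gaussian factor is bounded below; this yields $\Theta_{p}(1)$ (for $c_{\varepsilon}=\infty$ the same argument runs in the $v'$ variables, where the $t$-integral of $g'_{n}$ tends to $K\{Ds(\theta_{0})t\}$). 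For the polynomial moments with $c_{\varepsilon}<\infty$, since $a_{n}\varepsilon_{n}\to c_{\varepsilon}$ and $W_{{\rm obs}}\to Z$ we have $G_{n}(v)\to G(v)$, with domination $|P_{l}(v)G_{n}(v)|\le|P_{l}(v)|K(v)$ integrable for $l\le p+6$ by Condition \ref{cond:kernel_prop}(ii); dominated convergence and the continuous mapping theorem give convergence in distribution to $|A(\theta_{0})|^{-1/2}\int P_{l}(v)G(v)\,dv$. When $c_{\varepsilon}=\infty$, passing to the $v'$ variables gives $g'_{n}(t,v')\to N\{v';0,A(\theta_{0})\}K\{Ds(\theta_{0})t\}$ and the argument of $P_{l}$ tends to $Ds(\theta_{0})t$, the only randomness being of order $(a_{n}\varepsilon_{n})^{-1}$ and hence vanishing in probability, so dominated convergence yields $\int_{\mathbb{R}^{p}}P_{l}\{Ds(\theta_{0})t\}K\{Ds(\theta_{0})t\}\,dt$ in probability; integrability of the majorant again uses the full rank of $Ds(\theta_{0})$ and Condition \ref{cond:kernel_prop}(ii),(iv).
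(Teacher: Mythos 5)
Your proposal is correct and follows essentially the same route as the paper: linearisation of $\pi_\delta$, $s(\cdot)$ and $A(\cdot)$ to identify $\pi(\theta_0)g_n$ as the leading term, the change of variables $v'=v'(v,t)$ to tame the $c_\varepsilon=\infty$ regime, closed-form Gaussian integration over $t$ to reduce to $G_n(v)$, and dominated convergence plus the continuous mapping theorem (the content of the paper's Lemma \ref{lem:continuity_of_integrals}) for the limits of the moments. The only difference is that the paper delegates the first assertion to Lemma 5 of \cite{Li/Fearnhead:2018} rather than constructing the uniform integrable majorant directly as you sketch, but the underlying argument is the same.
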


The following lemma states that $\Pi_{\varepsilon}$ and $\tPi_{\varepsilon}$
are asymptotically the same and gives an expansion of $\ttheta_{\varepsilon}$. 

\begin{lemma} \label{lem:ABC_probability_expand}Assume Conditions
\ref{cond:par_true}--\ref{cond:sum_approx}.
If $\varepsilon_{n}=o(a_{n}^{-1/2})$, then

(a) for any $\delta<\delta_{0}$, $\Pi_{\varepsilon}(\theta\in B_{\delta}^{c}\mid s_{{\rm obs}})$
and $\tPi_{\varepsilon}(\theta\in B_{\delta}^{c}\mid s_{{\rm obs}})$
are $o_{p}(1)$;

(b) there exists a $\delta<\delta_0$ such that $\sup_{A\in\mathscr{B}^{p}}\left|\Pi_{\varepsilon}(\theta\in A\cap B_{\delta}\mid s_{{\rm obs}})-\tPi_{\varepsilon}(\theta\in A\cap B_{\delta}\mid s_{{\rm obs}})\right|=o_{p}(1)$;

(c) if, in addition, Condition \ref{cond:sum_approx_tail} holds, then $a_{n,\varepsilon}(\theta_{\varepsilon}-\ttheta_{\varepsilon})=o_{p}(1)$,
and $\ttheta_{\varepsilon}=\theta_{0}+a_{n}^{-1}\beta_{0}A(\theta_{0})^{1/2}W_{{\rm obs}}+\varepsilon_{n}\beta_{0}E_{G_{n}}(v)+r_{n,1}$
where the remainder $r_{n,1}=o_{p}(a_{n}^{-1})$. 

\end{lemma}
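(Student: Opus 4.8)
The plan is to reduce everything to the normal-approximation posterior $\tPi_\varepsilon$ and the mixture representation \eqref{eq:ABC_posterior_altform1}, and then to quantify the normalising constants using Lemma \ref{lem:ABC_likelihood_expansion}. For part (a), I would bound $\Pi_\varepsilon(\theta\in B_\delta^c\mid s_{\rm obs})$ by writing it as the ratio of $\int_{B_\delta^c}\int \pi(\theta)f_n(s\mid\theta)K\{\varepsilon_n^{-1}(s-s_{\rm obs})\}\,ds\,d\theta$ to the analogous integral over all of $\mathcal P$. The denominator is, after the change of variables $(t,v)$, of order $a_{n,\varepsilon}^{-d}\varepsilon_n^{d}\cdot\Theta_p(1)$ by the second assertion of Lemma \ref{lem:ABC_likelihood_expansion} (together with $\tpi_{\varepsilon,tv}\approx\pi(\theta_0)g_n$). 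For the numerator, Condition \ref{cond:sum_conv}(ii) ensures $\|s(\theta)-s(\theta_0)\|$ is bounded below on $B_\delta^c$, and Condition \ref{cond:sum_approx_tail}(ii) gives exponentially decaying tails of $f_{W_n}$ uniform on $\mathcal P_0^c$, so the kernel factor $K\{\varepsilon_n^{-1}(s-s_{\rm obs})\}$ is super-polynomially small there (using $\varepsilon_n\to0$); this dominates any polynomial factor and forces the numerator to be $o_p$ of the denominator. The same estimates, with $f_n$ and $\pi$ replaced by $\ftil_n$ and $\pi_\delta$, handle $\tPi_\varepsilon(\theta\in B_\delta^c\mid s_{\rm obs})$, which is in fact identically zero for the truncated prior but the bound is needed in the form used later.

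For part (b), on the set $B_\delta$ I would compare the integrands of $\Pi_\varepsilon$ and $\tPi_\varepsilon$ directly. After rescaling to $(t,v)$ coordinates, the difference of the two joint densities is controlled by $|\pi(\theta_0+a_{n,\varepsilon}^{-1}t)f_n(s_{\rm obs}+\varepsilon_n v\mid\cdot)-\pi_\delta(\cdot)\ftil_n(\cdot)|$, which splits into a prior-smoothness piece (handled by Condition \ref{cond:prior_regular}, giving an $O(a_{n,\varepsilon}^{-1})$ correction) and the density-approximation piece $|f_{W_n}-\ftil_{W_n}|\le c_3\alpha_n^{-1}r_{\rm max}(w)$ from Condition \ref{cond:sum_approx}. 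Since $\alpha_n/a_n^{2/5}\to\infty$ and $\varepsilon_n=o(a_n^{-1/2})$, integrating $r_{\rm max}(w)$ against the rescaled kernel and using Lemma \ref{lem:ABC_likelihood_expansion} to control the normalising constant from below shows the $L^1$ distance between the two (sub-probability) densities restricted to $B_\delta$ tends to zero in probability, which gives the uniform-over-$A$ bound. Combining with part (a) to discard the $B_\delta^c$ mass completes (b); a small $\delta$ is fixed to make the prior-expansion error terms uniform.

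For part (c), I would first use (a) and (b) together with Condition \ref{cond:sum_approx_tail} to argue that the posterior means $\theta_\varepsilon$ and $\ttheta_\varepsilon$ are both dominated by integrals over $B_\delta$ — this is where the exponential-tail condition is essential, since it guarantees that the first two moments of the tail contribution are negligible relative to $a_{n,\varepsilon}^{-1}$, not merely that the tail probability is small. Then $a_{n,\varepsilon}(\theta_\varepsilon-\ttheta_\varepsilon)=o_p(1)$ follows from the $L^1$ bound in (b) after multiplying by the scaling and checking uniform integrability of $t$ against the limiting densities of Lemma \ref{lem:ABC_likelihood_expansion} (again using the $l\le6$ polynomial-moment convergence there). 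Finally, for the explicit expansion of $\ttheta_\varepsilon$, I would compute $\int t\,\tpi_{\varepsilon,tv}(t,v)^{(\rm norm)}\,dt\,dv$ by inserting the approximation $\tpi_{\varepsilon,tv}\approx\pi(\theta_0)g_n$, using that $\int t\,g_n(t,v)\,dt\propto\{A(\theta_0)^{1/2}W_{\rm obs}+a_n\varepsilon_n v\}$ composed with $\beta_0=I(\theta_0)^{-1}Ds(\theta_0)^TA(\theta_0)^{-1}$, and then splitting off the $W_{\rm obs}$ term (giving $a_n^{-1}\beta_0A(\theta_0)^{1/2}W_{\rm obs}$) and the $v$-term (giving $\varepsilon_n\beta_0E_{G_n}(v)$), with all the approximation errors collected into $r_{n,1}=o_p(a_n^{-1})$; here the prior curvature term is $O(a_{n,\varepsilon}^{-1}\varepsilon_n)=o(a_n^{-1})$ and the density-approximation error is $O(\alpha_n^{-1}a_n^{-1})=o(a_n^{-1})$ because $\alpha_n\to\infty$.

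\textbf{Main obstacle.} The delicate point is part (c): controlling the \emph{moments} (not just the mass) of the posterior in the tail region $B_\delta^c$ and showing they are $o_p(a_n^{-1})$. The $L^1$-closeness of the densities on $B_\delta$ from (b) does not by itself control $\int_{B_\delta^c} |t|\,d\Pi_\varepsilon$; one must combine the exponential tail bound of Condition \ref{cond:sum_approx_tail}(ii), the lower bound $\Theta_p(1)$ on the normalising constant from Lemma \ref{lem:ABC_likelihood_expansion}, and a careful accounting of the polynomial-versus-exponential trade-off, while keeping everything uniform in $s_{\rm obs}$ (which is itself random, converging to $s(\theta_0)$). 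Getting the error in the expansion of $\ttheta_\varepsilon$ sharp enough — i.e.\ genuinely $o_p(a_n^{-1})$ rather than $o_p(a_{n,\varepsilon}^{-1})$ — requires using the full strength of $\alpha_n/a_n^{2/5}\to\infty$ and $\varepsilon_n=o(a_n^{-3/5})$ when multiplying the $\varepsilon_n$-order correction back by $a_n$.
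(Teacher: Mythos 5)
Your plan is correct and follows essentially the same route as the paper, which disposes of this lemma by citing Lemma \ref{lem:LF_results} (i.e.\ Lemmas 3, 5, 6 and the proof of Theorem 1 of Li and Fearnhead, 2018): the tail mass is killed by the kernel decay together with $\|s(\theta)-s(\theta_0)\|$ being bounded away from zero on $B_\delta^c$, the $L^1$ comparison on $B_\delta$ uses the $\alpha_n^{-1}r_{\rm max}$ bound of Condition \ref{cond:sum_approx}, and the expansion of $\ttheta_\varepsilon$ comes from substituting $\pi(\theta_0)g_n(t,v)$ and computing $\int t\,g_n(t,v)\,dt$ exactly as you describe. The one caveat is that parts (a)--(b) assume only Conditions \ref{cond:par_true}--\ref{cond:sum_approx}, so your appeal to Condition \ref{cond:sum_approx_tail}(ii) in part (a) is not available under the stated hypotheses; the paper itself notes that the cited tail bound must be modified (losing the exponential rate but retaining $o_p(1)$) when that condition is dropped, and your argument should be adjusted accordingly.
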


\begin{proof}[of Proposition \ref{thm:ABC_bad_convergence}] Lemma 10 in the
Supplementary Material shows that $\Pi_{\varepsilon}\{a_{n,\varepsilon}(\theta-\theta_{\varepsilon})\in A\mid s_{{\rm obs}}\}$
and $\tPi_{\varepsilon}\{a_{n,\varepsilon}(\theta-\ttheta_{\varepsilon})\in A\mid s_{{\rm obs}}\}$
have the same limit, in distribution when $c_{\varepsilon}\in(0,\infty)$
and in total variation form when $c_{\varepsilon}=0$ or $\infty$.
Therefore it is sufficient to only consider the convergence of $\tPi_{\varepsilon}$
of the properly scaled and centered $\theta$. 

When $a_{n}\varepsilon_{n}\rightarrow c_{\varepsilon}<\infty$, according
to \eqref{eq:ABC_posterior_altform1}, $\tPi_{\varepsilon}\{a_{n}(\theta-\ttheta_{\varepsilon})\in A\mid s_{{\rm obs}}\}$ equals
\[
\int_{\mathbb{R}^{d}}\int_{t(B_{\delta})}\tPi\{a_{n}(\theta-\theta_{0})\in A+a_{n}(\ttheta_{\varepsilon}-\theta_{0})\mid s_{{\rm obs}}+\varepsilon_{n}v\}\tpi_{\varepsilon,tv}(t',v)^{({\rm norm})}\:dt'dv.
\]
By Lemma \ref{lem:misspecified_posterior_limit} and Lemma \ref{lem:ABC_probability_expand}(c),
we have
\begin{align*}
\sup_{A\in\mathscr{B}^{p}}\left|\tPi\{a_{n}(\theta-\theta_{0})\in A+a_{n}(\ttheta_{\varepsilon}-\theta_{0})\mid s_{{\rm obs}}+\varepsilon_{n}v\}-\int_{A}N\{t;\mu_{n}(v),I(\theta_{0})^{-1}\}\,dt\right|=o_{p}(1),
\end{align*}
where $\mu_{n}(v)=\beta_{0}\{c_{\varepsilon}v-a_{n}\varepsilon_{n}E_{G_{n}}(v)\}-a_{n}r_{n,1}$.
Then with Lemma \ref{lem:ABC_likelihood_expansion}, the leading
term of $\tPi_{\varepsilon}\{a_{n}(\theta-\ttheta_{\varepsilon}) \in A\mid s_{{\rm obs}}\}$
equals
\begin{align}
 & \sup_{A\in\mathscr{B}^{p}}\left|\tPi_{\varepsilon}\{a_{n}(\theta-\ttheta_{\varepsilon}) \in A \mid s_{{\rm obs}}\}-\int_{\mathbb{R}^{d}}\int_{t(B_{\delta})}\int_{A}N\{t;\mu_{n}(v),I(\theta_{0})^{-1}\}g_{n}(t',v)^{({\rm norm})}\,dtdt'dv\right|=o_{p}(1).\label{eq:ABC_posterior_leading}
\end{align}

The numerator of the leading term of \eqref{eq:ABC_posterior_leading} is in the form
\begin{align*}
\int_{\mathbb{R}^{d}}\int_{t(B_{\delta})}\int_{A}N\{t;c_{\varepsilon}\beta_{0}v+x_{3},I(\theta_{0})^{-1}\}N\{Ds(\theta_{0})t';x_{1}v+x_{2},A(\theta_{0})\}K(v)\,dtdt'dv,
\end{align*}
where $x_{1}\in\mathbb{R}$, $x_{2}\in\mathbb{R}^{d}$
and $x_{3}\in\mathbb{R}^{p}$. This is continuous by Lemma \ref{lem:continuity_of_integrals} in the Supplementary Material. Then
since $E_{G_{n}}(v)\rightarrow E_{G}(v)$ in distribution as $n\rightarrow\infty$ by Lemma
\ref{lem:ABC_likelihood_expansion}, we have
\begin{eqnarray*}
\lefteqn{\int_{\mathbb{R}^{d}}\int_{t(B_{\delta})}\int_{A}N\{t;\mu_{n}(v),I(\theta_{0})^{-1}\}g_{n}(t',v)\,dtdt'dv} \\
& & \rightarrow\int_{\mathbb{R}^{d}}\int_{\mathbb{R}^{p}}\int_{A}N[t;c_{\varepsilon}\beta_{0}\{v-E_{G}(v)\},I(\theta_{0})^{-1}]g(t',v)\,dtdt'dv,
\end{eqnarray*}
in distribution as $n\rightarrow\infty$. Putting the above results together, it
holds that 
\[
\tPi_{\varepsilon}\{a_{n}(\theta-\ttheta_{\varepsilon})\in A\mid s_{{\rm obs}}\}\rightarrow\int_{A}\int_{\mathbb{R}^{p}}N[t;c_{\varepsilon}\beta_{0}\{v-E_{G}(v)\},I(\theta_{0})^{-1}]G(v)^{({\rm norm})}\,dvdt,
\]
in distribution, and statement (ii) of the proposition holds. 

When $c_{\varepsilon}=0$, since $\mu_{n}(v)$ does not depend on
$v$, \eqref{eq:ABC_posterior_leading} becomes 
\[
\sup_{A\in\mathscr{B}^{p}}\left|\tPi_{\varepsilon}\{a_{n}(\theta-\ttheta_{\varepsilon})\in A\mid s_{{\rm obs}}\}-\int_{A}N\{t;-a_{n}\varepsilon_{n}\beta_{0}E_{G_{n}}(v)-a_{n}r_{n,1},I(\theta_{0})^{-1}\}\,dt\right|=o_{p}(1),
\]
and by the continuous mapping theorem \cite[]{van2000asymptotic}, 
\begin{align*}
 & \int_{\mathbb{R}^{p}}\left|N\{t;-a_{n}\varepsilon_{n}\beta_{0}E_{G_{n}}(v)-a_{n}r_{n,1},I(\theta_{0})^{-1}\}-N\{t;0,I(\theta_{0})^{-1}\}\right|\,dt=o_{p}(1).
\end{align*}
Therefore $\sup_{A\in\mathscr{B}^{p}}\left|\tPi_{\varepsilon}\{a_{n}(\theta-\ttheta_{\varepsilon})\in A\mid s_{{\rm obs}}\}-\int_{A}N\{t;0,I(\theta_{0})^{-1}\}\,dt\right|=o_{p}(1)$,
and statement (i) of the proposition holds. 

When $c_{\varepsilon}=\infty$, Lemma \ref{lem:misspecified_posterior_limit}
cannot be applied to the posterior distribution within \eqref{eq:ABC_posterior_altform1}
directly. With transformation $v''=a_{n}\varepsilon_{n}v$, $\tPi_{\varepsilon}\{\theta\in A\mid s_{{\rm obs}}\}$ equals, 
\[
\int_{\mathbb{R}^{d}}\int_{t(B_{\delta})}\tPi(\theta\in A\mid s_{{\rm obs}}+a_{n}^{-1}v'')\tpi_{\varepsilon,tv}(t',a_{n}^{-1}\varepsilon_{n}^{-1}v'')^{({\rm norm})}\,dt'dv'',
\]
which implies that $\tPi_{\varepsilon}\{\varepsilon_{n}^{-1}(\theta-\ttheta_{\varepsilon})\in A\mid s_{{\rm obs}}\}$ equals
\begin{align*}
& \int_{\mathbb{R}^{d}}\int_{t(B_{\delta})}\tPi\{a_{n}(\theta-\theta_{0})\in a_{n}\varepsilon_{n}A+a_{n}(\ttheta_{\varepsilon}-\theta_{0})\mid s_{{\rm obs}}+a_{n}^{-1}v''\}\tpi_{\varepsilon,tv}(t',a_{n}^{-1}\varepsilon_{n}^{-1}v'')^{({\rm norm})}\:dt'dv''.
\end{align*}
Then Lemma \ref{lem:misspecified_posterior_limit} can be applied.
Using Lemma \ref{lem:ABC_likelihood_expansion} and transforming $v''$
back to $v$ we have 
\begin{align}
 & \sup_{A\in\mathscr{B}^{p}}\left|\tPi_{\varepsilon}\{\varepsilon_{n}^{-1}(\theta-\ttheta_{\varepsilon})\in A\mid s_{{\rm obs}}\}\right.\nonumber \\
 & \left.-\int_{\mathbb{R}^{d}}\int_{t(B_{\delta})}\int_{A}(a_{n}\varepsilon_{n})^{p}N\{a_{n}\varepsilon_{n}t;\mu_{n}'(v),I(\theta_{0})^{-1}\}g_{n}(t',v)^{({\rm norm})}\:dtdt'dv\right|=o_{p}(1),\label{eq:ABC_posterior_leading2}
\end{align}
where $\mu_{n}'(v)=\beta_{0}\{A(\theta_{0})^{1/2}W_{{\rm obs}}+a_{n}\varepsilon_{n}v\}-a_{n}(\ttheta_{\varepsilon}-\theta_{0})$. 

Let $t''(t,t')=a_{n}\varepsilon_{n}(t-t')+a_{n}(\ttheta_{\varepsilon}-\theta_{0})$
and $t''(A,B_{\delta})$ be the set $\{t''(t,t'):t\in A,t'\in t(B_{\delta})\}$.
With transformations $v'=v'(v,t')$ and $t''=t''(t,t')$, since $\beta_{0}Ds(\theta_{0})=I_{p}$,
%the numerator of the leading term of \eqref{eq:ABC_posterior_leading2}
%can be written as,
we have
\begin{align*}
 & \int_{\mathbb{R}^{d}}\int_{t(B_{\delta})}\int_{A}(a_{n}\varepsilon_{n})^{p}N\{a_{n}\varepsilon_{n}t;\mu_{n}'(v),I(\theta_{0})^{-1}\}g_{n}(t',v)\,dtdt'dv\\
 & =\int_{\mathbb{R}^{d}}\int_{t(B_{\delta})}\int_{A}(a_{n}\varepsilon_{n})^{p}N\{a_{n}\varepsilon_{n}(t-t');\beta_{0}v'-a_{n}(\ttheta_{\varepsilon}-\theta_{0}),I(\theta_{0})^{-1}\}g_{n}'(t',v')\,dtdt'dv'\\
 & =\int_{\mathbb{R}^{d}}\int_{t''(A,B_{\delta})}\int_{A}N\{t'';\beta_{0}v'-a_{n}(\ttheta_{\varepsilon}-\theta_{0}),I(\theta_{0})^{-1}\}g'_{n}\left[t-\frac{1}{a_{n}\varepsilon_{n}}\{t''-a_{n}(\ttheta_{\varepsilon}-\theta_{0})\},v'\right]\,dtdt''dv'.
\end{align*}
The idea now is that as $n\rightarrow \infty$, $a_n\varepsilon_n\rightarrow\infty$, so the $g'_n$ term in the integral will tend to $g'_n(t,v')$. Then by  integrating first with respect
to $t''$ and then with respect to $v$, we get the required result.

To make this argument rigorous, consider the following function, 
\begin{eqnarray*}
\lefteqn{\int_{\mathbb{R}^{d}}\int_{t''(A,B_{\delta})}\int_{A}N\{t'';\beta_{0}v',I(\theta_{0})^{-1}\}N\{v';0,A(\theta_{0})\}} \\
& & \times\left|K\{Ds(\theta_{0})t+x_{1}v'-x_{2}t''+x_{3}\}-K\{Ds(\theta_{0})t\}\right|\,dtdt''dv',
\end{eqnarray*}
where $x_{1}\in\mathbb{R}$, $x_{2}\in\mathbb{R}$
and $x_{3}\in\mathbb{R}^{d}$. This is continuous by Lemma \ref{lem:continuity_of_integrals} in the Supplementary Material, so by the continuous mapping theorem,
\begin{align*}
\sup_{A\in\mathscr{B}^{p}}\left|\int_{\mathbb{R}^{d}}\int_{t''(A,B_{\delta})}\int_{A}N\{t'';\beta_{0}v',I(\theta_{0})^{-1}\}g'_{n}\left(t-\frac{1}{a_{n}\varepsilon_{n}}t'',v'\right)\,dtdt''dv'-\int_{A}K\{Ds(\theta_{0})t\}\,dt\right| & =o_{p}(1).
\end{align*}
Then using Lemma \ref{lem:ABC_likelihood_expansion},
\[
\sup_{A\in\mathscr{B}^{p}}\left|\tPi_{\varepsilon}\{\varepsilon_{n}^{-1}(\theta-\ttheta_{\varepsilon})\in A\mid s_{{\rm obs}}\}-\int_{A}K\{Ds(\theta_{0})t\}\,
dt/\int_{\mathbb{R}^{p}}K\{Ds(\theta_{0})t\}\,dt\right|=o_{p}(1).
\]
Therefore statement (iii) of the proposition holds. 

\end{proof}

\subsection*{Proof of Result from Section \ref{adjABC_posterior}}

The intuition behind this result is that, as shown above, the joint posterior of $(\theta,s)$ under approximate Bayesian computation can be viewed as a marginal distribution for $s$ times a conditional for $\theta$
given $s$. The latter is just the true posterior for $\theta$ given $s$, and this posterior converges to a Gaussian limit that depends on $s$ only through its mean. Regression adjustment works because
it corrects for the dependence of this mean on $s$, so if we work with the regression adjusted parameter $\theta^*$ then the conditional distribution for $\theta^*$ given $s$ will tend to a Gaussian limit 
whose mean is the same for all $s$.

The following lemma gives an expansion of the optimal linear coefficient
matrix $\beta_{\varepsilon}$. The order of the remainder leads to
successful removal of the first order bias in the posterior distribution
of approximate Bayesian computation.

\begin{lemma} \label{lem:reg_coef}Assume Conditions \ref{cond:par_true}--\ref{cond:likelihood_moments}.
Then if $\varepsilon_{n}=o(a_{n}^{-3/5})$, $a_{n}\varepsilon_{n}(\beta_{\varepsilon}-\beta_{0})=o_{p}(1)$.
\end{lemma}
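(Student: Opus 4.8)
The plan is to express $\beta_\varepsilon$ through the posterior covariances of approximate Bayesian computation, $\beta_\varepsilon=\mathrm{cov}_\varepsilon(\theta,s)\,\mathrm{var}_\varepsilon(s)^{-1}$, where the expectations are taken under $\pi_\varepsilon(\theta,s\mid s_{\rm obs})$ and exist by Condition \ref{cond:likelihood_moments}, and to show that after passing to the truncated normal approximation the leading term is exactly $\beta_0$ while the remainder is $o_p\{(a_n\varepsilon_n)^{-1}\}$. As in the proof of Lemma \ref{lem:ABC_probability_expand}, Condition \ref{cond:sum_approx_tail} together with the tail of $r_{\rm max}$ in Condition \ref{cond:sum_approx} makes the contribution of $\theta\in B_\delta^c$ negligible, so throughout I may restrict to $B_\delta\times\mathbb{R}^d$ and write $f_n(s\mid\theta)=\ftil_n(s\mid\theta)\{1+R_n(s,\theta)\}$, where $|R_n(s,\theta)|\le c_3\alpha_n^{-1}r_{\rm max}(w)/\tilde f_{W_n}(w\mid\theta)$ by Condition \ref{cond:sum_approx} and $w=a_nA(\theta)^{-1/2}\{s-s(\theta)\}$.

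Using the mixture representation \eqref{eq:ABC_posterior_altform1} I write, under the resulting law, $s-s_{\rm obs}=\varepsilon_n V$, where $V$ has density proportional to $\int\tpi_{\varepsilon,tv}(t,V)\,dt$ and converges by Lemma \ref{lem:ABC_likelihood_expansion} so that $\mathrm{var}_\varepsilon(s)=\Theta_p(\varepsilon_n^2)$, and, conditional on $s$, $\theta$ is distributed as the true posterior given $s$, with mean $m(s)$. By the law of total covariance $\mathrm{cov}_\varepsilon(\theta,s)=\mathrm{cov}_\varepsilon\{m(s),s\}$, so $\beta_\varepsilon=\mathrm{cov}_\varepsilon\{m(s),s\}\,\mathrm{var}_\varepsilon(s)^{-1}$. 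Refining Lemma \ref{lem:misspecified_posterior_limit} by one order, via the Edgeworth bound in Condition \ref{cond:sum_approx} and Taylor expansions of $\pi$, $s(\cdot)$ and $A(\cdot)$ at $\theta_0$, I obtain
\[
m(s)=\theta_0+\beta_0\{s-s(\theta_0)\}+\rho(s),
\]
where $\rho(s)$ collects the quadratic-in-$\{s-s(\theta_0)\}$ term generated by the curvature of $s(\cdot)$ and the variation of $A(\cdot)$, the $O_p(a_n^{-2})$ discrepancy between the posterior mean and the root of the estimating equation, and a term arising from $R_n$. Since $s(\theta_0)$ is constant, $\mathrm{cov}_\varepsilon[\beta_0\{s-s(\theta_0)\},s]=\beta_0\mathrm{var}_\varepsilon(s)$, so that $\beta_\varepsilon-\beta_0=\mathrm{cov}_\varepsilon\{\rho(s),s\}\,\mathrm{var}_\varepsilon(s)^{-1}$; the structural reason the leading term is $\beta_0$ is that, for the $g_n$ approximation, $\int_{\mathbb{R}^p}g_n(t,v)\,t\,dt/\int_{\mathbb{R}^p}g_n(t,v)\,dt$ is affine in $v$, by completing the square in the Gaussian factor of $g_n$.

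It remains to bound the remainder. Writing $s-s(\theta_0)=\varepsilon_n V+a_n^{-1}A(\theta_0)^{1/2}W_{\rm obs}$ with $V=O_p(1)$ and $W_{\rm obs}=O_p(1)$, an accounting of the orders of the three pieces of $\rho(s)$ as functions of $s$, together with $\mathrm{var}_\varepsilon(s)=\Theta_p(\varepsilon_n^2)$, gives $\beta_\varepsilon-\beta_0=O_p(a_n^{-1}+\varepsilon_n+\alpha_n^{-1})$, the slowest-decaying contribution, of order $\alpha_n^{-1}$, coming from $R_n$. Hence
\[
a_n\varepsilon_n(\beta_\varepsilon-\beta_0)=O_p(\varepsilon_n+a_n\varepsilon_n^2+a_n\varepsilon_n\alpha_n^{-1}),
\]
and each term is $o_p(1)$: $\varepsilon_n\to0$; $a_n\varepsilon_n^2=o(1)$ because $\varepsilon_n=o(a_n^{-3/5})=o(a_n^{-1/2})$; and $a_n\varepsilon_n\alpha_n^{-1}=(a_n\varepsilon_n a_n^{-2/5})(a_n^{2/5}\alpha_n^{-1})\to0$ since $\varepsilon_n=o(a_n^{-3/5})$ gives $a_n\varepsilon_n=o(a_n^{2/5})$ and Condition \ref{cond:sum_approx} gives $\alpha_n/a_n^{2/5}\to\infty$. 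The last balance is exactly why the rate $\varepsilon_n=o(a_n^{-3/5})$, stronger than the $O(a_n^{-1})$ of Lemma \ref{lem:misspecified_posterior_limit}, is required.

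I expect the main obstacle to be the refined expansion of $m(s)$ with $s$-uniform control of $\rho(s)$, and in particular bounding the contribution of the Edgeworth remainder $R_n$ to the posterior mean uniformly over the range of $s$ retained by the kernel; this, together with the anisotropy of $\mathrm{var}_\varepsilon(s)$ when $d>p$, is where Conditions \ref{cond:kernel_prop}(iv) and \ref{cond:sum_approx_tail} enter to keep the large-$\|w\|$ regime negligible. The remaining steps — the reduction to the truncated normal approximation, the existence and orders of the posterior covariances, and the Taylor bookkeeping — parallel the proofs of Lemmas \ref{lem:misspecified_posterior_limit}–\ref{lem:ABC_probability_expand} and are routine but lengthy.
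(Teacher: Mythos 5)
Your proposal is correct in substance and reaches the right conclusion, but it organizes the middle of the argument differently from the paper. Both start from $\beta_{\varepsilon}=\mathrm{cov}_{\varepsilon}(\theta,s)\,\mathrm{var}_{\varepsilon}(s)^{-1}$, both dispose of $B_{\delta}^{c}$ via the tail conditions, and both locate the binding error in the Edgeworth remainder, with the same balance $a_{n}\varepsilon_{n}\alpha_{n}^{-1}=o(1)$ forced by $\varepsilon_{n}=o(a_{n}^{-3/5})$ and $\alpha_{n}/a_{n}^{2/5}\rightarrow\infty$. Where you differ is in how the leading term is identified: you condition on $s$, invoke the law of total covariance, and require a second-order, $s$-uniform refinement of the conditional posterior mean $m(s)$ (a strengthening of Lemma \ref{lem:misspecified_posterior_limit} that the paper never proves); the paper instead expands the five joint moments $\pi_{B_{\delta}}\{(\theta-\theta_{0})^{k_{1}}(s-s_{\rm obs})^{k_{2}}\}$ directly against the leading density $\pi(\theta_{0})g_{n}(t,v)$ (equation \eqref{eq:tv_moments_expand}), obtaining $\mathrm{cov}_{\varepsilon}(\theta,s)=\varepsilon_{n}^{2}\beta_{0}\mathrm{var}_{G_{n}}(v)+o_{p}(a_{n}^{-2/5}\varepsilon_{n}^{2})$ and $\mathrm{var}_{\varepsilon}(s)=\varepsilon_{n}^{2}\mathrm{var}_{G_{n}}(v)\{1+o_{p}(a_{n}^{-2/5})\}$, hence the cleaner intermediate statement $\beta_{\varepsilon}-\beta_{0}=o_{p}(a_{n}^{-2/5})$. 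The joint-moment route buys exactly what you flag as your main obstacle: it never needs uniform-in-$s$ control of $\rho(s)$, because the conditional mean is only ever integrated against the kernel. One small correction to your bookkeeping: the bound $\beta_{\varepsilon}-\beta_{0}=O_{p}(a_{n}^{-1}+\varepsilon_{n}+\alpha_{n}^{-1})$ is only right when $\varepsilon_{n}$ is at least of order $a_{n}^{-1}$; by Cauchy--Schwarz the $O_{p}(a_{n}^{-2})$ and Edgeworth pieces of $\rho(s)$ actually contribute $O_{p}(a_{n}^{-2}\varepsilon_{n}^{-1})$ and $O_{p}(\alpha_{n}^{-1}a_{n}^{-1}\varepsilon_{n}^{-1})$ to $\beta_{\varepsilon}-\beta_{0}$, which dominate your stated rates when $c_{\varepsilon}=0$ but, after multiplication by $a_{n}\varepsilon_{n}$, give $O_{p}(a_{n}^{-1})$ and $O_{p}(\alpha_{n}^{-1})$, so the conclusion is unaffected. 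Finally, you are right that the near-singularity of $\mathrm{var}_{G_{n}}(v)$ in the $d-p$ directions orthogonal to $A(\theta_{0})^{-1/2}Ds(\theta_{0})$ when $c_{\varepsilon}=\infty$ is the delicate point in inverting $\mathrm{var}_{\varepsilon}(s)$; the paper's multiplicative form of the error is what absorbs this, and any write-up along your lines would need the analogous care.
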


The following lemma, similar to Lemma \ref{lem:ABC_probability_expand},
says that the approximate Bayesian computation posterior distribution of $\theta^{*}$ is asymptotically
the same as $\tPi_{\varepsilon}$. Recall that $\theta_{\varepsilon}^{*}$
is the mean of $\pi_{\varepsilon}^{*}(\theta^{*}\mid s_{{\rm obs}})$.
Let $\tpi_{\varepsilon}^{*}(\theta^{*}\mid s_{{\rm obs}})=\int_{\mathbb{R}^{d}}\tpi_{\varepsilon}\{\theta^{*}+\beta_{\varepsilon}(s-s_{{\rm obs}}),s\mid s_{{\rm obs}}\}\,ds$
and $\ttheta_{\varepsilon}^{*}$ be the mean of $\tpi_{\varepsilon}^{*}(\theta^{*}\mid s_{{\rm obs}})$.
\begin{lemma} \label{lem:adjABC_probability_expand}Assume Conditions
\ref{cond:par_true}--\ref{cond:likelihood_moments}. If $\varepsilon_{n}=o(a_{n}^{-3/5})$,
then

(a) for any $\delta<\delta_{0}$, $\Pi_{\varepsilon}(\theta^{*}\in B_{\delta}^{c}\mid s_{{\rm obs}})$
and $\tPi_{\varepsilon}(\theta^{*}\in B_{\delta}^{c}\mid s_{{\rm obs}})$
are $o_{p}(1)$;

(b) there exists $\delta<\delta_0$ such that $\sup_{A\in\mathscr{B}^{p}}\left|\Pi_{\varepsilon}(\theta^{*}\in A\cap B_{\delta}\mid s_{{\rm obs}})-\tPi_{\varepsilon}(\theta^{*}\in A\cap B_{\delta}\mid s_{{\rm obs}})\right|=o_{p}(1)$;

(c) $a_{n}(\theta_{\varepsilon}^{*}-\ttheta_{\varepsilon}^{*})=o_{p}(1)$,
and $\ttheta_{\varepsilon}^{*}=\theta_{0}+a_{n}^{-1}\beta_{0}A(\theta_{0})^{1/2}W_{{\rm obs}}+\varepsilon_{n}(\beta_{0}-\beta_{\varepsilon})E_{G_{n}}(v)+r_{n,2}$
where the remainder $r_{n,2}=o_{p}(a_{n}^{-1})$. \end{lemma}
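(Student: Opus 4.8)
The plan is to deduce parts (a) and (b) from the corresponding parts of Lemma~\ref{lem:ABC_probability_expand} by exploiting that, for the fixed matrix $\beta_{\varepsilon}$, the regression adjustment $(\theta,s)\mapsto(\theta^{*},s)=\{\theta-\beta_{\varepsilon}(s-s_{{\rm obs}}),s\}$ is a bijection of $\mathbb{R}^{p}\times\mathbb{R}^{d}$ with unit Jacobian, so that the joint law of $(\theta^{*},s)$ under $\pi_{\varepsilon}$, respectively $\tpi_{\varepsilon}$, is the pushforward of the joint law of $(\theta,s)$, while the $\theta^{*}$--marginal is $\pi_{\varepsilon}^{*}$, respectively $\tpi_{\varepsilon}^{*}$. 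Part (c) will instead require a quantitative refinement, because Lemma~\ref{lem:ABC_probability_expand}(c) controls $\theta_{\varepsilon}-\ttheta_{\varepsilon}$ only at rate $a_{n,\varepsilon}^{-1}$, which is coarser than $a_{n}^{-1}$ when $c_{\varepsilon}=\infty$, so the adjustment must be shown to supply the missing cancellation.

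For (a) I would first record that $\beta_{\varepsilon}\to\beta_{0}$ in probability by Lemma~\ref{lem:reg_coef}, so $\beta_{\varepsilon}=O_{p}(1)$, while under each of $\Pi_{\varepsilon}$ and $\tPi_{\varepsilon}$ the summary concentrates, $s-s_{{\rm obs}}=O_{p}\{\max(\varepsilon_{n},a_{n}^{-1})\}=o_{p}(1)$; for $\tPi_{\varepsilon}$ this is contained in Lemma~\ref{lem:ABC_likelihood_expansion}, and for $\Pi_{\varepsilon}$ it follows from the same computation with $f_{n}$ in place of $\ftil_{n}$, justified by Conditions~\ref{cond:sum_approx}--\ref{cond:sum_approx_tail}. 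Then for any $\delta<\delta_{0}$ one has $\{\theta^{*}\in B_{\delta}^{c}\}\subseteq\{\theta\in B_{\delta/2}^{c}\}\cup\{\|\beta_{\varepsilon}(s-s_{{\rm obs}})\|>\delta/2\}$, and each event on the right has probability $o_{p}(1)$ under both $\Pi_{\varepsilon}$ and $\tPi_{\varepsilon}$ by Lemma~\ref{lem:ABC_probability_expand}(a) and the concentration just noted, which gives (a). For (b) I would fix $\delta<\delta_{0}/2$ and change variables $\theta=\theta^{*}+\beta_{\varepsilon}(s-s_{{\rm obs}})$, Jacobian one, in the integral representations of $\Pi_{\varepsilon}(\theta^{*}\in A\cap B_{\delta}\mid s_{{\rm obs}})$ and $\tPi_{\varepsilon}(\theta^{*}\in A\cap B_{\delta}\mid s_{{\rm obs}})$; their difference is then at most $\int_{\mathbb{R}^{d}}\int_{\{\theta-\beta_{\varepsilon}(s-s_{{\rm obs}})\in B_{\delta}\}}|\pi_{\varepsilon}(\theta,s\mid s_{{\rm obs}})-\tpi_{\varepsilon}(\theta,s\mid s_{{\rm obs}})|\,d\theta\,ds$, whose domain lies in $\{\theta\in B_{2\delta}\}\cup\{\|\beta_{\varepsilon}(s-s_{{\rm obs}})\|>\delta\}$. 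The contribution of the second set is $o_{p}(1)$ by the concentration; the contribution on $\{\theta\in B_{2\delta}\}$ is $o_{p}(1)$ because the argument proving Lemma~\ref{lem:ABC_probability_expand}(b) in fact controls the joint quantity $\int\int_{B_{2\delta}\times\mathbb{R}^{d}}|\pi_{\varepsilon}-\tpi_{\varepsilon}|$, its estimates involving only the replacement of $\pi,f_{n}$ by $\pi_{\delta},\ftil_{n}$ and renormalisation. This proves (b).

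For (c) I would write $\theta^{*}=\theta^{\sharp}-(\beta_{\varepsilon}-\beta_{0})(s-s_{{\rm obs}})$ with $\theta^{\sharp}=\theta-\beta_{0}(s-s_{{\rm obs}})$, so that, writing $s_{\varepsilon}$ and $\tilde s_{\varepsilon}$ for the posterior means of $s$ under $\pi_{\varepsilon}$ and $\tpi_{\varepsilon}$,
\[
a_{n}(\theta_{\varepsilon}^{*}-\ttheta_{\varepsilon}^{*})=a_{n}\{E_{\pi_{\varepsilon}}(\theta^{\sharp})-E_{\tpi_{\varepsilon}}(\theta^{\sharp})\}-a_{n}(\beta_{\varepsilon}-\beta_{0})(s_{\varepsilon}-\tilde s_{\varepsilon}).
\]
The last term is $o_{p}(1)$ since $\beta_{\varepsilon}-\beta_{0}=o_{p}\{(a_{n}\varepsilon_{n})^{-1}\}$ by Lemma~\ref{lem:reg_coef} and $s_{\varepsilon}-\tilde s_{\varepsilon}=O_{p}\{\max(\varepsilon_{n},a_{n}^{-1})\}$. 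For the first term the key observation is that, after rescaling to $(t,v)$, under the idealised density $g_{n}$ the vector $a_{n}(\theta-\theta_{0})$ is, conditionally on $v$, distributed as $N\{\beta_{0}(a_{n}\varepsilon_{n}v+A(\theta_{0})^{1/2}W_{{\rm obs}}),I(\theta_{0})^{-1}\}$, so, since $\beta_{0}Ds(\theta_{0})=I_{p}$, $E_{g_{n}}\{a_{n}(\theta^{\sharp}-\theta_{0})\mid v\}=\beta_{0}A(\theta_{0})^{1/2}W_{{\rm obs}}$, free of $v$; equivalently $a_{n}(\theta^{\sharp}-\theta_{0})$, unlike $a_{n}(\theta-\theta_{0})$, stays $O_{p}(1)$ because its dependence on $v$ cancels. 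Replacing $\pi_{\varepsilon,tv}$ and $\tpi_{\varepsilon,tv}$ by $\pi(\theta_{0})g_{n}$ in $E_{\pi_{\varepsilon}}\{a_{n}(\theta^{\sharp}-\theta_{0})\}$ and $E_{\tpi_{\varepsilon}}\{a_{n}(\theta^{\sharp}-\theta_{0})\}$, using the moment statement of Lemma~\ref{lem:ABC_likelihood_expansion} for the tilded density and its $f_{n}$--analogue for the untilded one, and discarding the $B_{\delta}^{c}$ tails via Condition~\ref{cond:sum_approx_tail}, both expectations equal $\beta_{0}A(\theta_{0})^{1/2}W_{{\rm obs}}+o_{p}(1)$, so $a_{n}\{E_{\pi_{\varepsilon}}(\theta^{\sharp})-E_{\tpi_{\varepsilon}}(\theta^{\sharp})\}=o_{p}(1)$ and $a_{n}(\theta_{\varepsilon}^{*}-\ttheta_{\varepsilon}^{*})=o_{p}(1)$. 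For the stated expansion I would write $\ttheta_{\varepsilon}^{*}=\ttheta_{\varepsilon}-\beta_{\varepsilon}(\tilde s_{\varepsilon}-s_{{\rm obs}})$ and substitute the expansion of $\ttheta_{\varepsilon}$ from Lemma~\ref{lem:ABC_probability_expand}(c) together with $\tilde s_{\varepsilon}-s_{{\rm obs}}=\varepsilon_{n}E_{G_{n}}(v)+o_{p}(a_{n}^{-1})$, which comes from the moment expansion of Lemma~\ref{lem:ABC_likelihood_expansion}, giving $r_{n,2}=r_{n,1}-\beta_{\varepsilon}o_{p}(a_{n}^{-1})=o_{p}(a_{n}^{-1})$.

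The hard part is the first term of part (c). Lemma~\ref{lem:ABC_probability_expand}(c) cannot be quoted directly, since its bound on $\theta_{\varepsilon}-\ttheta_{\varepsilon}$ is $o_{p}(a_{n,\varepsilon}^{-1})$, weaker than the $o_{p}(a_{n}^{-1})$ required when $c_{\varepsilon}=\infty$; nor does it suffice to invoke parts (a)--(b), because the resulting total-variation mass discrepancy is only $o_{p}(1)$, which after the centering at $\theta_{0}$ contributes $o_{p}(1)$ rather than $o_{p}(a_{n}^{-1})$. The genuine work is therefore to re-run the moment computations underlying Lemma~\ref{lem:ABC_likelihood_expansion} with the true likelihood $f_{n}$, and to verify that the Edgeworth remainder of Condition~\ref{cond:sum_approx}, of order $\alpha_{n}^{-1}=o(a_{n}^{-2/5})$, remains negligible relative to $a_{n}^{-1}$ once integrated against the adjusted variable $a_{n}(\theta^{\sharp}-\theta_{0})$, which the cancellation above keeps $O_{p}(1)$; this is exactly where $\varepsilon_{n}=o(a_{n}^{-3/5})$ enters, through $a_{n}^{-2/5}=o\{(a_{n}\varepsilon_{n})^{-1}\}$.
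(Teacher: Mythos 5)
Your overall strategy for (a) and (b) --- the containment $\{\theta^{*}\in B_{\delta}^{c}\}\subseteq\{\theta\in B_{\delta/2}^{c}\}\cup\{\|\beta_{\varepsilon}(s-s_{{\rm obs}})\|>\delta/2\}$, followed by Lemma \ref{lem:ABC_probability_expand} for the first event and a tail bound for the second --- is the same as the paper's. But there is a genuine gap at the step where you dispose of the second event: you assert $\beta_{\varepsilon}=O_{p}(1)$ on the grounds that ``$\beta_{\varepsilon}\rightarrow\beta_{0}$ in probability by Lemma \ref{lem:reg_coef}''. That lemma only gives $\beta_{\varepsilon}-\beta_{0}=o_{p}\{(a_{n}\varepsilon_{n})^{-1}\}$, and when $a_{n}\varepsilon_{n}\rightarrow0$ this does not bound $\beta_{\varepsilon}$ at all. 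The paper's proof makes exactly this distinction: when $\varepsilon_{n}$ is $\Omega(a_{n}^{-7/5})$ the coefficient is bounded in probability and the kernel tail bound $K\{\varepsilon_{n}^{-1}\delta/(2\beta_{{\rm sup}})\}\varepsilon_{n}^{-d}=o_{p}(1)$ suffices, but when $\varepsilon_{n}=o(a_{n}^{-7/5})$ the matrix $\beta_{\varepsilon}$ can be unbounded, and the bulk of the paper's proof of (a) is a separate argument for that regime: bound $f_{n}$ by $\ftil_{n}$ plus the envelope $\alpha_{n}^{-1}a_{n}^{d}|A(\theta)|^{1/2}r_{\rm max}$, pass to the $(t,v)$ variables, and split the $t$-domain according to whether $\|Ds(\theta_{0}+e_{n}^{(1)}t)t\|/2$ dominates $\|A(\theta_{0})^{1/2}W_{{\rm obs}}+a_{n}\varepsilon_{n}v\|$, using only $\beta_{\varepsilon}\varepsilon_{n}=o_{p}(1)$, which Lemma \ref{lem:reg_coef} does supply. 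As written, your proofs of (a) and (b) fail in the small-bandwidth regime; the same unjustified boundedness reappears at the end of (c) when you write $r_{n,2}=r_{n,1}-\beta_{\varepsilon}\,o_{p}(a_{n}^{-1})$.

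Two further points on (c). First, the rate you assign to $s_{\varepsilon}-\ts_{\varepsilon}$, namely $O_{p}\{\max(\varepsilon_{n},a_{n}^{-1})\}$, is the rate of $s_{\varepsilon}-s_{{\rm obs}}$, not of the difference of the two posterior means; with that crude rate your cross term $a_{n}(\beta_{\varepsilon}-\beta_{0})(s_{\varepsilon}-\ts_{\varepsilon})$ is only $o_{p}\{(a_{n}\varepsilon_{n})^{-1}\}$, which is not $o_{p}(1)$ when $c_{\varepsilon}=0$. The correct statement, $s_{\varepsilon}-\ts_{\varepsilon}=O_{p}(\varepsilon_{n}\alpha_{n}^{-1})$, follows from the moment expansions in the proof of Lemma \ref{lem:reg_coef}, and the paper keeps $\beta_{\varepsilon}$ whole to get $a_{n}\beta_{\varepsilon}(s_{\varepsilon}-\ts_{\varepsilon})=O_{p}(\alpha_{n}^{-1}a_{n}\varepsilon_{n})=o_{p}(1)$ for $\varepsilon_{n}=o(a_{n}^{-3/5})$. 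Second, what you identify as the ``hard part'' --- re-deriving $a_{n}(\theta_{\varepsilon}-\ttheta_{\varepsilon})=o_{p}(1)$ via the cancellation in $\theta-\beta_{0}(s-s_{{\rm obs}})$ --- is a legitimate idea (it is the same cancellation that drives Theorem \ref{thm:ABC_good_convergence}) but an unnecessary detour here: under $\varepsilon_{n}=o(a_{n}^{-3/5})$ the $o_{p}(a_{n}^{-1})$ rate for $\theta_{\varepsilon}-\ttheta_{\varepsilon}$ is already available from Lemma 6 of Li and Fearnhead (2018), restated as Lemma \ref{lem:LF_results}(iv), and the paper simply cites it.
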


\begin{proof}[of Theorem \ref{thm:ABC_good_convergence}]Similar to the proof of Proposition \ref{thm:ABC_bad_convergence}, it is sufficient to only consider the convergence of $\tPi_{\varepsilon}$
of the properly scaled and centered $\theta^*$. Similar
to \eqref{eq:ABC_posterior_altform1},
%, $\tPi_{\varepsilon}$ of $\theta^{*}$ can be rewritten as 
\begin{align*}
\tPi_{\varepsilon}(\theta^{*}\in A\mid s_{{\rm obs}}) & =\begin{cases}
\int_{\mathbb{R}^{d}}\int_{t(B_{\delta})}\tPi(\theta\in A+\varepsilon_{n}\beta_{\varepsilon}v\mid s_{{\rm obs}}+\varepsilon_{n}v)\tpi_{\varepsilon,tv}(t',v)^{({\rm norm})}\,dt'dv, & \ c_{\varepsilon}<\infty,\\
\int_{\mathbb{R}^{d}}\int_{t(B_{\delta})}\tPi(\theta\in A+\varepsilon_{n}\beta_{\varepsilon}v\mid s_{{\rm obs}}+a_{n}^{-1}v)\tpi_{\varepsilon,tv}(t',a_{n}^{-1}\varepsilon_{n}^{-1}v)^{({\rm norm})}\,dt'dv, & \ c_{\varepsilon}=\infty.
\end{cases}
\end{align*}
Similar to \eqref{eq:ABC_posterior_leading}, by Lemma \ref{lem:misspecified_posterior_limit}
and Lemma \ref{lem:adjABC_probability_expand}(c), we have 
\begin{align}
 & \sup_{A\in\mathscr{B}^{p}}\left|\tPi_{\varepsilon}\{a_{n}(\theta^{*}-\ttheta_{\varepsilon}^{*})\in A\mid s_{{\rm obs}}\}-\int_{\mathbb{R}^{d}}\int_{t(B_{\delta})}\int_{A}N\{t;\mu_{n}^{*}(v),I(\theta_{0})^{-1}\}g_{n}(t',v)^{({\rm norm})}\,dtdt'dv\right|=o_{p}(1),\label{eq:adjABC_posterior_leading}
\end{align}
where $\mu_{n}^{*}(v)=\{a_{n}\varepsilon_{n}(\beta_{0}-\beta_{\varepsilon})+(c_{\varepsilon}-a_{n}\varepsilon_{n})\beta_{0}\mathbbm{1}_{c_{\varepsilon}<\infty}\}\{v-E_{G_{n}}(v)\}+a_{n}r_{n,2}$.
Since $a_{n}\varepsilon_{n}(\beta_{\varepsilon}-\beta_{0})=o_{p}(1)$,
by Lemma \ref{lem:continuity_of_integrals} in the Supplementary Material and the continuous mapping
theorem, 
\[
\int_{\mathbb{R}^{d}}\int_{t(B_{\delta})}\int_{\mathbb{R}^{p}}\left|N\{t;\mu_{n}^{*}(v),I(\theta_{0})^{-1}\}-N\{t;0,I(\theta_{0})^{-1}\}\right|g_{n}(t',v)\,dtdt'dv=o_{p}(1).
\]
Then we have
\begin{align*}
 & \sup_{A\in\mathscr{B}^{p}}\left|\tPi_{\varepsilon}\{a_{n}(\theta^{*}-\ttheta_{\varepsilon}^{*})\in A\mid s_{{\rm obs}}\}-\int_{A}N\{t;0,I(\theta_{0})^{-1}\}\,dt\right|=o_{p}(1),
\end{align*}
and the first convergence in the theorem holds.

The second convergence in the theorem holds by Lemma \ref{lem:adjABC_probability_expand}(c).

Since the only requirement for $\beta_{\varepsilon}$ in the above
is $a_{n}\varepsilon_{n}(\beta_{\varepsilon}^{'}-\beta_{\varepsilon})=o_{p}(1)$,
the above arguments will hold if $\beta_{\varepsilon}$ is replaced
by a $p\times d$ matrix $\hbeta_{\varepsilon}$ satisfying $a_{n}\varepsilon_{n}(\hbeta_{\varepsilon}-\beta_{\varepsilon})=o_{p}(1)$.
%Therefore the theorem holds.
\end{proof}

\begin{proof}[of Proposition \ref{prop:ABC_good_convergence}]

Consider $\theta^{*}$ where $\beta_{\varepsilon}$ is replaced by
$\hbeta_{\varepsilon}$. Let $\eta_{n}=N^{1/2}a_{n}\varepsilon_{n}(\hbeta_{\varepsilon}-\beta_{\varepsilon})$.
Since $\hbeta_{\varepsilon}-\beta_{\varepsilon}=O_{p}\{(a_{n}\varepsilon_{n})^{-1}N^{-1/2}\}$
as $n\rightarrow\infty$, $\eta_{n}=O_{p}(1)$ as $n\rightarrow\infty$
and let its limit be $\eta$. In this case, if we replace $\mu_{n}^{*}(v)$
in \eqref{eq:adjABC_posterior_leading} with $\mu_{n}^{*}(v)+N^{-1/2}\eta_{n}\{v-E_{G_{n}}(v)\}$,
denoted by $\hmu_{n}^{*}(v)$, the equation still holds. Denote this
equation by $(7')$. Limits of the leading term in $(7')$ can be
obtained by arguments similar as those for \eqref{eq:ABC_posterior_leading}.

When $c_{\varepsilon}<\infty$, since for fixed $v$, $\hmu_{n}^{*}(v)$
converges to $N^{-1/2}\eta\{v-E_{G}(v)\}$ in distribution, by following
the same line we have 
\begin{align*}
\tPi_{\varepsilon}\{a_{n}(\theta^{*}-\ttheta_{\varepsilon}^{*})\in A\mid s_{{\rm obs}}\} & \rightarrow\int_{A}\int_{\mathbb{R}^{p}}N[t;N^{-1/2}\eta\{v-E_{G}(v)\},I(\theta_{0})^{-1}]G(v)^{({\rm norm})}\,dvdt,
\end{align*}
in distribution as $n\rightarrow\infty$.

When $c_{\varepsilon}=\infty$, by Lemma \ref{lem:ABC_likelihood_expansion}
we have $E_{G_{n}}(v)\rightarrow0$ in probability, and $\int_{\mathbb{R}^{d}}\int_{t(B_{\delta})}g_{n}(t',v)\,dt'dv\rightarrow\int_{\mathbb{R}^{p}}K\{Ds(\theta_{0})t\}\,dt$
in probability. Then with transformation $v'=v'(v,t')$, for fixed
$v$,
\begin{align*}
\hmu_{n}^{*}(v) & =\{a_{n}\varepsilon_{n}(\beta_{0}-\beta_{\varepsilon})+N^{-1/2}\eta_{n}\}\left\{ Ds(\theta_{0})t'+\frac{1}{a_{n}\varepsilon_{n}}v'-\frac{1}{a_{n}\varepsilon_{n}}A(\theta_{0})^{1/2}W_{{\rm obs}}-E_{G_{n}(v)}\right\} \\
 & \rightarrow N^{-1/2}\eta Ds(\theta_{0})t',
\end{align*}
in distribution as $n\rightarrow\infty$. Recall that $g_{n}(t',v)\,dv=g_{n}'(t',v')\,dv'$.
Then by Lemma \ref{lem:continuity_of_integrals} in the Supplementary Material and the continuous
mapping theorem,
\begin{align*}
 & \int_{\mathbb{R}^{d}}\int_{t(B_{\delta})}\int_{A}N\{t;\hmu_{n}^{*}(v),I(\theta_{0})^{-1}\}g_{n}'(t',v')\,dtdt'dv\\
\rightarrow & \int_{\mathbb{R}^{d}}\int_{t(B_{\delta})}\int_{A}N\{t;N^{-1/2}\eta Ds(\theta_{0})t',I(\theta_{0})^{-1}\}K\{Ds(\theta_{0})t'\}\,dtdt'dv,
\end{align*}
in distribution as $n\rightarrow\infty$. Therefore by $(7')$ and the above convergence results, 
\begin{align*}
\tPi_{\varepsilon}\{a_{n}(\theta^{*}-\ttheta_{\varepsilon}^{*})\in A\mid s_{{\rm obs}}\} & \rightarrow\int_{A}\int_{\mathbb{R}^{p}}N\{t;N^{-1/2}\eta Ds(\theta_{0})t',I(\theta_{0})^{-1}\}K\{Ds(\theta_{0})t'\}^{({\rm norm})}\,dt'dt,
\end{align*}
in distribution as $n\rightarrow\infty$.
\end{proof}

\section*{Supplementary Material}
\addcontentsline{toc}{section}{Supplementary Material}

\subsection*{Notations and Set-up}

First some limit notations and conventions are given. For
two sets $A$ and $B$, the sum of integrals $\int_{A}f(x)\,dx+\int_{B}f(x)\,dx$
is written as $(\int_{A}+\int_{B})f(x)\,dx$. For a constant $d\times p$
matrix $A$, let the minimum and maximum eigenvalues of $A^{T}A$
be $\lambda_{{\rm min}}^{2}(A)$ and $\lambda_{{\rm max}}^{2}(A)$
where $\lambda_{{\rm min}}(A)$ and $\lambda_{{\rm max}}(A)$ are
non-negative. Obviously, for any $p$-dimension vector $x$, $\lambda_{{\rm min}}(A)\|x\|\leq\|Ax\|\leq\lambda_{{\rm max}}(A)\|x\|$.
For two matrices $A$ and $B$, we say $A$ is bounded by $B$ or
$A\leq B$ if $\lambda_{{\rm max}}(A)\leq\lambda_{{\rm min}}(B)$.
For a set of matrices $\{A_{i}:i\in I\}$ for some index set $I$,
we say it is bounded if $\lambda_{{\rm max}}(A_{i})$ are uniformly
bounded in $i$. Denote the identity matrix with dimension $d$ by
$I_{d}$. Notations from the main text will also be used.

The following basic asymptotic results \cite[]{serfling2009approximation}
will be used throughout. \begin{lemma} \label{lem:basic_asymp} (i) For a series of random
variables $Z_{n}$, if $Z_{n}\rightarrow Z$ in distribution as $n\rightarrow\infty$, $Z_{n}=O_{p}(1)$.
(ii) (Continuous mapping) For a series of continuous function $g_{n}(x)$, if $g_{n}(x)=O(1)$
almost everywhere, then $g_{n}(Z_{n})=O_{p}(1)$, and this also holds if
$O(1)$ and $O_{p}(1)$ are replaced by $\Theta(1)$ and $\Theta_{p}(1)$.
\end{lemma}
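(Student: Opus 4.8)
The plan is to treat the two parts separately; each is a standard fact about weak convergence and tightness on $\mathbb{R}^{k}$, so the proofs are short once the statements are read correctly.

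For part (i), I would invoke the ``easy'' direction of Prokhorov's theorem directly. Fix $\varepsilon>0$. Since the limit $Z$ is a proper random vector, choose $M_{0}$ with $P(\|Z\|<M_{0})>1-\varepsilon/2$. By the portmanteau theorem applied to the open ball $B=\{x:\|x\|<M_{0}\}$, $\liminf_{n}P(Z_{n}\in B)\geq P(Z\in B)>1-\varepsilon/2$, so there is an $N$ with $P(\|Z_{n}\|\geq M_{0})<\varepsilon$ for all $n\geq N$. For each of the finitely many indices $n<N$, pick $M_{n}$ with $P(\|Z_{n}\|\geq M_{n})<\varepsilon$, which is possible because each $Z_{n}$ is almost surely finite. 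Taking $M=\max\{M_{0},M_{1},\dots,M_{N-1}\}$ gives $\sup_{n}P(\|Z_{n}\|\geq M)\leq\varepsilon$, which is precisely $Z_{n}=O_{p}(1)$.

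For part (ii), the first step is to read the hypothesis ``$g_{n}(x)=O(1)$ almost everywhere'' correctly: it asserts a bound uniform in both $n$ and $x$, i.e.\ there is a constant $M$ and a Lebesgue-null set $\mathcal{N}$ with $|g_{n}(x)|\leq M$ for every $n$ and every $x\notin\mathcal{N}$. Continuity of $g_{n}$ is then used to delete the exceptional set: the set $\{x:|g_{n}(x)|\leq M\}$ is closed and contains the dense set $\mathcal{N}^{c}$, hence equals the whole space, so in fact $|g_{n}(x)|\leq M$ for all $x$ and all $n$. Consequently $|g_{n}(Z_{n})|\leq M$ almost surely for any sequence of random vectors $Z_{n}$, so $P(|g_{n}(Z_{n})|>M+1)=0$ for all $n$ and $g_{n}(Z_{n})=O_{p}(1)$. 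For the $\Theta$ version, if in addition $0<m\leq|g_{n}(x)|$ holds almost everywhere, the same closedness-plus-density argument upgrades it to $m\leq|g_{n}(x)|\leq M$ for all $x$, whence $m\leq|g_{n}(Z_{n})|\leq M$ almost surely and $g_{n}(Z_{n})=\Theta_{p}(1)$.

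There is essentially no obstacle here; the only point demanding care is the interpretation of ``$O(1)$ almost everywhere'' as a bound uniform in $n$ (a bound allowed to depend on $x$ would make the statement false, as a moving-bump example with $Z_{n}\to 0$ shows), together with the use of continuity to pass from an almost-everywhere bound to an everywhere bound, which is what makes the composition $g_{n}(Z_{n})$ controllable deterministically rather than merely in probability.
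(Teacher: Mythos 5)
The paper never actually proves this lemma: it is presented as a pair of ``basic asymptotic results'' with a citation to Serfling (2009), so there is no in-paper argument to compare against, and your write-up supplies more detail than the source does. Your part (i) is the standard tightness argument (portmanteau on an open ball, then absorbing the finitely many initial indices) and is correct as written.

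Part (ii) is where interpretation matters. You read ``$g_n(x)=O(1)$ almost everywhere'' as a single constant $M$ bounding $|g_n(x)|$ for all $n$ and all $x$ off one null set; under that reading your proof is valid (the closed-set/dense-complement step correctly removes the null set, and the bound becomes deterministic). But notice that under this reading the hypothesis $Z_n\rightarrow Z$ in distribution is never used and the conclusion holds for arbitrary random sequences --- a strong hint that this is not the intended hypothesis. Indeed, where the paper invokes the $\Theta$ version (e.g.\ to conclude $\int\int g_n(t,v)\,dt\,dv=\Theta_p(1)$ in Lemma~\ref{lem:ABC_likelihood_expansion}), the relevant functions of the random arguments are \emph{not} bounded below away from zero uniformly over all of $\mathbb{R}^d$ --- they decay as $\|W_{{\rm obs}}\|\rightarrow\infty$ --- so your globally uniform reading would not cover those applications. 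The version that is both true and matches the usage is local: the $g_n$ are uniformly in $n$ bounded on compact sets (above, and for the $\Theta$ version also below), and the proof must then combine part (i) with this local bound --- given $\varepsilon$, use tightness to pick a compact $K$ with $\sup_n P(Z_n\notin K)<\varepsilon$ and set $M=\sup_n\sup_{x\in K}|g_n(x)|$; equivalently, one appeals to the extended continuous mapping theorem when $g_n$ converges continuously to a limit $g$, which is how the lemma is actually deployed elsewhere in the supplement. So your argument is internally correct, and your observation that the pointwise-in-$x$ reading is false (via a moving-bump example) is exactly right, but the statement you prove has a hypothesis too strong to serve the paper's applications; the missing ingredient for the applicable version is precisely the interplay between the tightness established in part (i) and a locally uniform bound on the $g_n$.
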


Some notations regarding the posterior distribution of approximate Bayesian computation are given. For $A\subset\mathbbm{R}^{p}$
and a scalar function $h(\theta,s)$, let 
\[\pi_{A}(h)=\int_{A}\int_{\mathbb{R}^{d}}h(\theta,s)\pi(\theta)f_{n}(s\mid\theta)K\{\varepsilon_{n}^{-1}(s-s_{{\rm obs}})\}\varepsilon_{n}^{-d}\,dsd\theta,\]
and 
\[\tpi_{A}(h)=\int_{A}\int_{\mathbb{R}^{d}}h(\theta,s)\pi_{\delta}(\theta)\ftil_{n}(s\mid\theta)K\{\varepsilon_{n}^{-1}(s-s_{{\rm obs}})\}\varepsilon_{n}^{-d}\,dsd\theta.\]
Then $\Pi_{\varepsilon}(\theta\in A\mid s_{{\rm obs}})=\pi_{A}(1)/\pi_{\mathcal{P}}(1)$
and its normal counterpart $\tPi_{\varepsilon}(\theta\in A\mid s_{{\rm obs}})=\tpi_{A}(1)/\tpi_{\mathcal{P}}(1)$.

The following results from \citet{Li/Fearnhead:2018} will be used throughout. 

\begin{lemma} \label{lem:LF_results} Assume Conditions \ref{cond:par_true}\textendash \ref{cond:sum_approx}.
Then as $n\rightarrow\infty$,

(i) if Condition \ref{cond:sum_approx_tail} also holds then, for any $\delta<\delta_{0}$, $\pi_{B_{\delta}^{c}}(1)$ and $\tpi_{B_{\delta}^{c}}(1)$
are $o_{p}(1)$, and $O_{p}(e^{-a_{n,\varepsilon}^{\alpha_{\delta}}c_{\delta}})$ for
some positive constants $c_{\delta}$ and $\alpha_{\delta}$ depending
on $\delta$;

(ii) $\pi_{B_{\delta}}(1)=\tpi_{B_{\delta}}(1)\{1+O_{p}(\alpha_{n}^{-1})\}$
and $\mbox{{\rm sup}}_{A\subset B_{\delta}}\left|\pi_{A}(1)-\tpi_{A}(1)\right|/\tpi_{B_{\delta}}(1)=O_{p}(\alpha_{n}^{-1})$;

(iii) if $\varepsilon_{n}=o(a_{n}^{-1/2})$, $\tpi_{B_{\delta}}(1)$
and $\pi_{B_{\delta}}(1)$ are $\Theta_{p}(a_{n,\varepsilon}^{d-p})$, and thus $\tpi_{\mathcal{P}}(1)$ and $\pi_{\mathcal{P}}(1)$ are
$\Theta_{p}(a_{n,\varepsilon}^{d-p})$;

(iv) if $\varepsilon_{n}=o(a_{n}^{-1/2})$ and Condition \ref{cond:sum_approx_tail} holds, $\theta_{\varepsilon}=\ttheta_{\varepsilon}+o_{p}(a_{n,\varepsilon}^{-1})$.
If $\varepsilon_{n}=o(a_{n}^{-3/5})$, $\theta_{\varepsilon}=\ttheta_{\varepsilon}+o_{p}(a_{n}^{-1})$.

\end{lemma}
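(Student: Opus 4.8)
These four facts are quoted from \citet{Li/Fearnhead:2018}, and the plan is to re-derive them in the order (i)--(iv), each feeding the next. The one computation that unifies the argument is the rescaling $t=a_{n,\varepsilon}(\theta-\theta_{0})$, $v=\varepsilon_{n}^{-1}(s-s_{{\rm obs}})$, under which $\varepsilon_{n}^{-d}\,ds\,d\theta=a_{n,\varepsilon}^{-p}\,dt\,dv$ and $\pi_{\delta}(\theta)\ftil_{n}(s\mid\theta)K\{\varepsilon_{n}^{-1}(s-s_{{\rm obs}})\}=a_{n,\varepsilon}^{d}\,\tpi_{\varepsilon,tv}(t,v)$, so that for any integrable weight $h$, expressed in the rescaled variables, $\tpi_{B_{\delta}}(h)=a_{n,\varepsilon}^{d-p}\int_{\mathbb{R}^{d}}\int_{t(B_{\delta})}h\,\tpi_{\varepsilon,tv}(t,v)\,dt\,dv$. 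This reduces every mass and moment to an integral of $\tpi_{\varepsilon,tv}$, whose local behaviour is already described by Lemma \ref{lem:ABC_likelihood_expansion}. For part (i) I would bound the tail mass by hand. Decomposing $s-s_{{\rm obs}}=\{s-s(\theta)\}+\{s(\theta)-s(\theta_{0})\}+\{s(\theta_{0})-s_{{\rm obs}}\}$ and using Condition \ref{cond:sum_conv}(ii), on $B_{\delta}^{c}$ the middle term exceeds some $\delta'>0$; since $s_{{\rm obs}}\to s(\theta_{0})$, either $s$ is $\Theta(1)$ away from $s_{{\rm obs}}$, so $K$ is exponentially small by Condition \ref{cond:kernel_prop}(iv) at scale $\varepsilon_{n}^{-1}$, or $s$ lies within $\delta'/2$ of $s_{{\rm obs}}$ and is then $\Theta(1)$ away from its mean $s(\theta)$, so $\|W_{n}\|=\Theta(a_{n})$ and $f_{n}$ is exponentially small by Condition \ref{cond:sum_approx_tail}(ii), uniformly over $\mathcal{P}_{0}^{c}$ with $A(\theta)$ bounded. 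Integrating the slower of the two decays over $\theta\in B_{\delta}^{c}$ gives $O_{p}(e^{-c_{\delta}a_{n,\varepsilon}^{\alpha_{\delta}}})$, the exponent carrying $a_{n,\varepsilon}=\min(a_{n},\varepsilon_{n}^{-1})$; the counterpart $\tpi_{B_{\delta}^{c}}(1)$ is immediate, being zero once the prior is truncated at radius $\delta$ and otherwise controlled by the Gaussian tails of $\ftil_{n}$.

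For parts (ii) and (iii), which I would prove together, Condition \ref{cond:sum_approx} converts to $|f_{n}(s\mid\theta)-\ftil_{n}(s\mid\theta)|\le c_{3}a_{n}^{d}|A(\theta)|^{-1/2}r_{{\rm max}}(w)/\alpha_{n}$; since $\pi=\pi_{\delta}$ on $B_{\delta}$, for any $A\subset B_{\delta}$ the difference $|\pi_{A}(1)-\tpi_{A}(1)|$ is at most $\alpha_{n}^{-1}$ times the integral defining $\tpi_{B_{\delta}}(1)$ with $r_{{\rm max}}$ in place of the Gaussian factor. Because $r_{{\rm max}}$ obeys the kernel shape conditions \ref{cond:kernel_prop}(ii)--(iii), this integral has the same order $\Theta_{p}(a_{n,\varepsilon}^{d-p})$ as $\tpi_{B_{\delta}}(1)$ under the rescaling, giving the uniform relative bound $O_{p}(\alpha_{n}^{-1})$ and, at $A=B_{\delta}$, statement (ii). The order itself is (iii): the rescaling gives $\tpi_{B_{\delta}}(1)=a_{n,\varepsilon}^{d-p}\int\int\tpi_{\varepsilon,tv}$, and Lemma \ref{lem:ABC_likelihood_expansion} supplies $\int\int\tpi_{\varepsilon,tv}=\pi(\theta_{0})\int\int g_{n}+o_{p}(1)=\Theta_{p}(1)$, using $\pi(\theta_{0})>0$; a self-contained derivation would instead integrate $t$ out of $g_{n}$ against its Gaussian factor to leave a $G_{n}(v)$-type density and check that its total mass stays bounded away from $0$ and $\infty$ via $W_{{\rm obs}}\to Z$. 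Transferring through (ii), adding the exponentially small tails from (i), and using $a_{n,\varepsilon}^{d-p}\ge1$ (as $d\ge p$ by Condition \ref{cond:sum_conv}(iii)), gives the same order for $\pi_{B_{\delta}}(1)$, $\tpi_{\mathcal{P}}(1)$ and $\pi_{\mathcal{P}}(1)$.

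For part (iv), write $\theta_{\varepsilon}-\theta_{0}=\pi_{\mathcal{P}}(\theta-\theta_{0})/\pi_{\mathcal{P}}(1)$ and likewise for $\ttheta_{\varepsilon}$. First discard the tails using (i) together with Condition \ref{cond:sum_approx_tail}, so that the polynomial growth of $\theta-\theta_{0}$ cannot offset the exponential decay, reducing both ratios to $B_{\delta}$. Because the mass concentrates at scale $a_{n,\varepsilon}^{-1}$, inserting $h=\theta-\theta_{0}=a_{n,\varepsilon}^{-1}t$ produces one extra power of $a_{n,\varepsilon}^{-1}$: the first-moment refinement of Lemma \ref{lem:ABC_likelihood_expansion}, in which the $t$-moment reduces to a bounded $v$-moment of $g_{n}$, gives $\tpi_{B_{\delta}}(\theta-\theta_{0})/\tpi_{B_{\delta}}(1)=O_{p}(a_{n,\varepsilon}^{-1})$ and $|\pi_{B_{\delta}}(\theta-\theta_{0})-\tpi_{B_{\delta}}(\theta-\theta_{0})|=O_{p}(\alpha_{n}^{-1}a_{n,\varepsilon}^{-1})\tpi_{B_{\delta}}(1)$. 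Combining these with (ii) for the denominator yields $\theta_{\varepsilon}-\ttheta_{\varepsilon}=O_{p}(\alpha_{n}^{-1}a_{n,\varepsilon}^{-1})$, which is $o_{p}(a_{n,\varepsilon}^{-1})$ since $\alpha_{n}\to\infty$. For the sharper rate I check that $\varepsilon_{n}=o(a_{n}^{-3/5})$ forces $\alpha_{n}^{-1}a_{n,\varepsilon}^{-1}=o(a_{n}^{-1})$: when $c_{\varepsilon}<\infty$ this reads $\alpha_{n}^{-1}\to0$, while when $c_{\varepsilon}=\infty$ it reads $a_{n}\varepsilon_{n}/\alpha_{n}=o(a_{n}^{2/5})/\alpha_{n}\to0$, precisely because $\alpha_{n}/a_{n}^{2/5}\to\infty$ by Condition \ref{cond:sum_approx}.

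I expect the main obstacle to be the Gaussian-integral computation behind (iii): showing that the $d$-dimensional kernel smoothing collapses onto the $p$ parameter directions to produce exactly the order $a_{n,\varepsilon}^{d-p}$, uniformly across the regimes $c_{\varepsilon}=0$, $c_{\varepsilon}\in(0,\infty)$ and $c_{\varepsilon}=\infty$, and with the correct random centring $W_{{\rm obs}}\to Z$. This is where the orthogonal projection $I-A(A^{T}A)^{-1}A^{T}$ and the limiting density $G(v)$ enter, and it is also what forces the tail and moment estimates used in (i) and (iv) to be uniform in $\theta$ and $v$ rather than merely pointwise.
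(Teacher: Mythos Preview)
Your proposal is more ambitious than the paper's own treatment: the paper does not re-derive these facts at all, but simply records pointers into \citet{Li/Fearnhead:2018}, namely Lemma~3 for (i), equation~(13) of the supplements for (ii), Lemma~5 together with equation~(13) for (iii), and Lemmas~3 and~6 for (iv). So there is nothing to compare at the level of technique; the paper defers entirely to the companion article.

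That said, the re-derivation you sketch is sound and follows the same architecture as the cited lemmas. The rescaling $t=a_{n,\varepsilon}(\theta-\theta_{0})$, $v=\varepsilon_{n}^{-1}(s-s_{{\rm obs}})$ producing the factor $a_{n,\varepsilon}^{d-p}$ is exactly how Lemma~5 of \citet{Li/Fearnhead:2018} proceeds; your tail argument for (i), splitting according to whether $s$ is close to $s_{{\rm obs}}$ or close to $s(\theta)$, is the content of their Lemma~3; and your observation that $\tpi_{B_{\delta}^{c}}(1)=0$ by construction of $\pi_{\delta}$ is correct. The rate calculation at the end of (iv), distinguishing $c_{\varepsilon}<\infty$ from $c_{\varepsilon}=\infty$ and invoking $\alpha_{n}/a_{n}^{2/5}\to\infty$, is also right and matches how the $o_{p}(a_{n}^{-1})$ conclusion is reached in Lemma~6 there. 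Your identification of the main obstacle---the Gaussian projection producing $G_{n}(v)$ and the uniform control across regimes---is accurate, and is precisely what Lemma~\ref{lem:ABC_likelihood_expansion} packages; since you already invoke that lemma, the obstacle is effectively delegated rather than confronted.
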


\begin{proof} (i) is from \citet[Lemma 3]{Li/Fearnhead:2018} and a
trivial modification of its proof when Condition \ref{cond:sum_approx_tail} does no hold; (ii) is from \citet[equation 13 of supplements]{Li/Fearnhead:2018};
(iii) is from \citet[Lemma 5 and equation 13 of supplements]{Li/Fearnhead:2018}; and  
(iv) is from \citet[Lemma 3 and Lemma 6]{Li/Fearnhead:2018}. \end{proof} \medskip{}

\subsection*{Proof for Results in Section \ref{ABC_posterior}}

\begin{proof}[of Lemma \ref{lem:misspecified_posterior_limit}]

For any fixed $v\in\mathbb{R}^{d}$, recall that $\tPi(\theta\in A\mid s_{{\rm obs}}+\varepsilon_{n}v)$
is the posterior distribution given $s_{{\rm obs}}+\varepsilon_{n}v$
with prior $\pi_{\delta}(\theta)$ and the misspecified model $\widetilde{f}_{n}(\cdot\mid\theta)$.
By \cite{kleijn2012bernstein}, if there exist $\Delta_{n,\theta_{0}}$
and $V_{\theta_{0}}$ such that,
\begin{description}
\item [{\rm (KV1)}] for any compact set $K\subset t(B_{\delta})$, 
\begin{align*}
\sup_{t\in K}\left|\log\frac{\widetilde{f}_{n}(s_{{\rm obs}}+\varepsilon_{n}v\mid\theta_{0}+a_{n}^{-1}t)}{\widetilde{f}_{n}(s_{{\rm obs}}+\varepsilon_{n}v\mid\theta_{0})}-t^{T}V_{\theta_{0}}\Delta_{n,\theta_{0}}+\frac{1}{2}t^{T}V_{\theta_{0}}t\right| & \rightarrow0,
\end{align*}
 in probability as $n\rightarrow\infty$, and
\item [{\rm (KV2)}] $E\{\tPi(a_{n}\|\theta-\theta_{0}\|>M_{n}\mid s_{{\rm obs}}+\varepsilon_{n}v)\}\rightarrow0$  as $n\rightarrow\infty$ for any sequence of constants $M_{n}\rightarrow\infty$,
\end{description}
then
\[
\sup_{A\in\mathscr{B}^{p}}\left|\tPi\{a_{n}(\theta-\theta_{0})\in A\mid s_{{\rm obs}}+\varepsilon_{n}v\}-\int_{A}N(t;\Delta_{n,\theta_{0}},V_{\theta_{0}}^{-1})\,dt\right|\rightarrow0,
\]
in probability as $n\rightarrow\infty$. 

For (KV1), by the definition of $\widetilde{f}_{n}(s\mid\theta)$,
\[
\log\frac{\widetilde{f}_{n}(s_{{\rm obs}}+\varepsilon_{n}v\mid\theta_{0}+a_{n}^{-1}t)}{\widetilde{f}_{n}(s_{{\rm obs}}+\varepsilon_{n}v\mid\theta_{0})}=\log\frac{N\{s_{{\rm obs}}+\varepsilon_{n}v;s(\theta_{0}+a_{n}^{-1}t),a_{n}^{-2}A(\theta_{0}+a_{n}^{-1}t)\}}{N\{s_{{\rm obs}}+\varepsilon_{n}v;s(\theta_{0}),a_{n}^{-2}A(\theta_{0})\}}.
\]
As $x^{T}Ax-y^{T}By=x^{T}(A-B)x+(x-y)^{T}B(x+y)$, for
vectors $x$ and $y$ and matrices $A$ and $B$, by applying a Taylor expansion
on $s(\theta_{0}+xt)$ and $A(\theta_{0}+xt)$ around $x=0$, the
right hand side of above equation equals
\begin{align*}
 & \{Ds(\theta_{0}+e_{n}^{(1)}t)t\}^{T}A(\theta_{0})^{-1}\zeta_{n}(v,t)-\frac{a_{n}^{-1}}{2}\zeta_{n}(v,t)^{T}\left\{ \sum_{i=1}^{p}D_{\theta_{i}}A^{-1}(\theta_{0}+e_{n}^{(2)}t)t_{i}\right\} \zeta_{n}(v,t)\\
 & +\frac{a_{n}^{-1}}{2}\left\{D\log\left|A(\theta_{0}+e_{n}^{(3)}t)\right|\right\}^{T}t,
\end{align*}
where $\zeta_{n}(v,t)=A(\theta_{0})^{1/2}W_{{\rm obs}}+a_{n}\varepsilon_{n}v-\frac{1}{2}Ds(\theta_{0}+e_{n}^{(1)}t)t$
and for $j=1,2,3$, $e_{n}^{(j)}$ is a function of $t$ satisfying
$|e_{n}^{(j)}|\le a_{n}^{-1}$ which is from the remainder of the Taylor
expansions. Since $Ds(\theta)$, $DA^{-1}(\theta)$
and $D\log\left|A(\theta)\right|$ are bounded in $B_{\delta}$ when $\delta$ is small enough, 
\[
\sup_{t\in K}\left|\log\frac{\widetilde{f}_{n}(s_{{\rm obs}}+\varepsilon_{n}v\mid\theta_{0}+a_{n}^{-1}t)}{\widetilde{f}_{n}(s_{{\rm obs}}+\varepsilon_{n}v\mid\theta_{0})}-t^{T}I(\theta_{0})\beta_{0}\{A(\theta_{0})^{1/2}W_{{\rm obs}}+c_{\varepsilon}v\}+\frac{1}{2}t^{T}I(\theta_{0})t\right|\rightarrow0,
\]
 in probability as $n\rightarrow\infty$, for any compact set $K$. Therefore (KV1) holds with
$\Delta_{n,\theta_{0}}=\beta_{0}\{A(\theta_{0})^{1/2}W_{{\rm obs}}+c_{\varepsilon}v\}$
and $V_{\theta_{0}}=I(\theta_{0})$. 

For (KV2), let $r_{n}(s\mid\theta_{0})=\alpha_{n}\{f_{n}(s\mid\theta_{0})-\ftil_{n}(s\mid\theta_{0})\}$.
Since $r_{n}(s\mid\theta_{0})$ is bounded by a function integrable in
$\mathbb{R}^{d}$ by Condition \ref{cond:sum_approx},
\begin{align*}
 & E\{\tPi(a_{n}\|\theta-\theta_{0}\|>M_{n}\mid s_{{\rm obs}}+\varepsilon_{n}v)\}-\int_{\mathbb{R}^{d}}\tPi(a_{n}\|\theta-\theta_{0}\|>M_{n}\mid s+\varepsilon_{n}v)\ftil_{n}(s\mid \theta_{0})\,ds\\
\le & \alpha_{n}^{-1}\int_{\mathbb{R}^{d}}|r_{n}(s\mid \theta_{0})|\,ds=o(1).
\end{align*}
Then it is sufficient for the expectation under $\ftil_{n}(s\mid \theta_{0})$
to be $o(1)$. For any constant $M>0$, with the transformation $\bar{v}=a_{n}\{s-s(\theta_{0})\}$,
\begin{align*}
 & \int_{\mathbb{R}^{d}}\tPi(a_{n}\|\theta-\theta_{0}\|>M_{n}\mid s+\varepsilon_{n}v)\ftil_{n}(s\mid \theta_{0})\,ds\\
\leq & \int_{\|\bar{v}\|\le M}\frac{\int_{\|t\|>M_{n}}\tpi(t,\bar{v}\mid v)\,dt}{\int_{t(B_{\delta})}\tpi(t,\bar{v}\mid v)\,dt}N\{\bar{v};0,A(\theta_{0})\}\,d\bar{v}+\int_{\|\bar{v}\|>M}N\{\bar{v};0,A(\theta_{0})\}\,d\bar{v},
\end{align*}
where $\tpi(t,\bar{v}\mid v)=\pi_{\delta}(\theta_{0}+a_{n}^{-1}t)\ftil_{n}\{s(\theta_{0})+a_{n}^{-1}\bar{v}+\varepsilon_{n}v\mid \theta_{0}+a_{n}^{-1}t\}$.
For the first term in the above upper bound, it is bounded by a series which does not depend
on $M$ and is $o(1)$ as $M_{n}\rightarrow\infty$, as shown below. Obviously $\int_{t(B_{\delta})}\tpi(t,\bar{v}\mid v)\,dt$
can be lower bounded for some constant $m_{\delta}>0$. Choose $\delta$
small enough such that $Ds(\theta)$ and $A(\theta)^{1/2}$ are bounded
for $\theta\in B_{\delta}$. Let $\lambda_{{\rm min}}$ and $\lambda_{{\rm max}}$
be their common bounds. When $\|\bar{v}\|<M$ and $M_{n}$ is large
enough,
\begin{align}
\{t:\|t\|>M_{n}\} & \subset\left\{ t:\frac{\sup_{\theta\in B_{\delta}}\|Ds(\theta)t\|}{2}\geq\|a_{n}\varepsilon_{n}v+\bar{v}\|\right\} .\label{eq:set_t}
\end{align}
Then since for any $\bar{v}$ satisfying $\|\bar{v}\|<M$, by a Taylor
expansion,
\begin{align*}
\ftil_{n}\{s(\theta_{0})+a_{n}^{-1}\bar{v}+\varepsilon_{n}v\mid \theta_{0}+a_{n}^{-1}t\} & =a_{n}^{d}N\{Ds(\theta_{0}+e_{n}^{(1)}t)t;\bar{v}+a_{n}\varepsilon_{n}v,A(\theta_{0}+a_{n}^{-1}t)\},
\end{align*}
$\tpi(t,\bar{v}\mid v)\leq cN(\lambda_{{\rm max}}^{-1}\lambda_{{\rm min}}\|t\|/2;0,1)$, where
$c$ is some positive constant, for $t$ in the right hand side of \eqref{eq:set_t}. Then 
\[
\int_{\|\bar{v}\|\le M}\frac{\int_{\|t\|>M_{n}}\tpi(t,\bar{v}\mid v)\,dt}{\int_{t(B_{\delta})}\tpi(t,\bar{v}\mid v)\,dt}N\{\bar{v};0,A(\theta_{0})\}\,d\bar{v}\leq m_{\delta}^{-1}c\int_{\|t\|>M_{n}}N(\lambda_{{\rm max}}^{-1}\lambda_{{\rm min}}\|t\|/2;0,1)\,dt,
\]
the right hand side of which is $o(1)$ when $M_{n}\rightarrow\infty$.
Meanwhile by letting $M\rightarrow\infty$, it can be seen that the
expectation under $\ftil_{n}(s\mid \theta_{0})$ is $o(1)$. Therefore
(KV2) holds and the lemma holds.
\end{proof}

The following lemma is used for equations $\int_{\mathbb{R}^{p}}g_{n}(t,v)\,dt=|A(\theta_{0})|^{-1/2}G_{n}(v)$ and $\int_{\mathbb{R}^{p}}g(t,v)\,dt=|A(\theta_{0})|^{-1/2}G(v)$.
\begin{lemma}\label{lem:norm_dens_expand}

For a rank-$p$ $d\times p$ matrix $A$, a rank-$d$ $d\times d$
matrix $B$ and a $d$-dimension vector $c$,
\begin{align}
N(At;Bv+c,I_{d}) & =N\left\{t;(A^{T}A)^{-1}A^{T}(c+Bv),(A^{T}A)^{-1}\right\}g(v;A,B,c),\label{eq:normal_decomp}
\end{align}
where $P=A^{T}A$, and 
\begin{align*}
 & g(v;A,B,c)=\frac{1}{(2\pi)^{(d-p)/2}}\exp\left\{-\frac{1}{2}(c+Bv)^{T}(I-A(A^{T}A)^{-1}A^{T})(c+Bv)\right\}.
\end{align*}

\end{lemma}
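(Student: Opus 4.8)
The plan is to prove \eqref{eq:normal_decomp} by a direct completion-of-the-square computation: fix $v$, write $\mu = c + Bv \in \mathbb{R}^{d}$, and regard both sides as functions of $t \in \mathbb{R}^{p}$. First I would expand the left-hand side as $N(At;\mu,I_{d}) = (2\pi)^{-d/2}\exp\{-\frac{1}{2}(At-\mu)^{T}(At-\mu)\}$ and multiply out the quadratic form, $(At-\mu)^{T}(At-\mu) = t^{T}Pt - 2(A^{T}\mu)^{T}t + \mu^{T}\mu$, where $P = A^{T}A$ is $p\times p$ and invertible because $A$ has full column rank $p$.

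Next I would complete the square in $t$ using the elementary identity $t^{T}Pt - 2b^{T}t = (t - P^{-1}b)^{T}P(t - P^{-1}b) - b^{T}P^{-1}b$ with $b = A^{T}\mu$, which gives $(At-\mu)^{T}(At-\mu) = (t - P^{-1}A^{T}\mu)^{T}P(t - P^{-1}A^{T}\mu) + \mu^{T}(I_{d} - AP^{-1}A^{T})\mu$. The first term, once paired with the constant $(2\pi)^{-p/2}|P|^{1/2}$, is exactly $N\{t; P^{-1}A^{T}\mu, P^{-1}\} = N\{t;(A^{T}A)^{-1}A^{T}(c+Bv),(A^{T}A)^{-1}\}$, the first factor on the right-hand side; the second term, together with the leftover powers of $2\pi$ and the determinant of $P$, assembles into $g(v;A,B,c)$. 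Resubstituting $\mu = c + Bv$ then yields \eqref{eq:normal_decomp}.

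There is no genuine obstacle here: the argument is a short Gaussian-algebra manipulation. The only points worth a little care are the bookkeeping of the normalizing constants, where the constant prefactor in $g(v;A,B,c)$ is read off by matching the remaining powers of $2\pi$ and the determinant of $A^{T}A$, and the observation that $I_{d} - A(A^{T}A)^{-1}A^{T}$ is the orthogonal projection onto the orthogonal complement of the column space of $A$, so that the decomposition is intrinsic and independent of any choice of matrix square root. The companion identities $\int_{\mathbb{R}^{p}} g_{n}(t,v)\,dt = |A(\theta_{0})|^{-1/2}G_{n}(v)$ and $\int_{\mathbb{R}^{p}} g(t,v)\,dt = |A(\theta_{0})|^{-1/2}G(v)$ then follow immediately by applying \eqref{eq:normal_decomp} with $A = A(\theta_{0})^{-1/2}Ds(\theta_{0})$, with $B$ equal to $a_{n}\varepsilon_{n}A(\theta_{0})^{-1/2}$ or $c_{\varepsilon}A(\theta_{0})^{-1/2}$, and with $c$ equal to $W_{{\rm obs}}$ or $Z$, after rewriting the Gaussian in $Ds(\theta_{0})t$ with covariance $A(\theta_{0})$ as $|A(\theta_{0})|^{-1/2}$ times a Gaussian in $A(\theta_{0})^{-1/2}Ds(\theta_{0})t$ with identity covariance and integrating out the resulting normalised Gaussian in $t$.
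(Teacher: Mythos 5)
Your completion-of-the-square strategy is exactly the intended argument: the paper's own proof of this lemma consists of the single sentence that the identity ``can be verified easily by matrix algebra,'' so there is no methodological difference to discuss. The quadratic-form manipulation you describe is correct: with $\mu=c+Bv$ and $P=A^{T}A$ (invertible since $A$ has full column rank),
\[
(At-\mu)^{T}(At-\mu)=(t-P^{-1}A^{T}\mu)^{T}P(t-P^{-1}A^{T}\mu)+\mu^{T}(I_{d}-AP^{-1}A^{T})\mu .
\]

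However, the one step you defer --- ``the constant prefactor in $g(v;A,B,c)$ is read off by matching the remaining powers of $2\pi$ and the determinant of $A^{T}A$'' --- does not close as you assert. Dividing $N(At;\mu,I_{d})=(2\pi)^{-d/2}\exp\{-(At-\mu)^{T}(At-\mu)/2\}$ by the first factor $N\{t;P^{-1}A^{T}\mu,P^{-1}\}=(2\pi)^{-p/2}|P|^{1/2}\exp\{-(t-P^{-1}A^{T}\mu)^{T}P(t-P^{-1}A^{T}\mu)/2\}$ leaves
\[
(2\pi)^{-(d-p)/2}\,|A^{T}A|^{-1/2}\exp\left\{-\tfrac{1}{2}\mu^{T}(I_{d}-AP^{-1}A^{T})\mu\right\},
\]
which is $|A^{T}A|^{-1/2}\,g(v;A,B,c)$, not $g(v;A,B,c)$: the $g$ displayed in the lemma carries no determinant factor (indeed $P$ is defined in the statement but never used in the formula). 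You can see the mismatch already in the scalar case $d=p=1$, $A=a$: the left side is $N(at;\mu,1)=|a|^{-1}N(t;\mu/a,1/a^{2})$, while the stated right side is $N(t;\mu/a,1/a^{2})\cdot 1$. So either $g$ should be defined with an extra $|A^{T}A|^{-1/2}$, or the identity holds only up to that multiplicative constant. This is harmless downstream, because $g_{n}$, $G_{n}$, $g$ and $G$ enter the main proofs only in normalised form, so a constant free of $t$ and $v$ cancels; but your write-up asserts that the constants match exactly, and a careful execution of your own computation would show that they do not.
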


\begin{proof}
This can be verified easily by matrix algebra.
\end{proof}

The following lemma regarding the continuity of a certain form of
integral will be helpful when applying the continuous mapping theorem.

\begin{lemma}\label{lem:continuity_of_integrals} Let $l_{1}$, $l_{1}'$,
$l_{2}$, $l_{2}'$ and $l_{3}$ be positive integers satisfying $l_{1}'\leq l_{1}$
and $l_{2}'\leq l_{2}$. Let $A$ and $B$ be $l_{1}\times l_{1}'$
and $l_{2}\times l_{2}'$ matrices, respectively, satisfying that
$A^{T}A$ and $B^{T}B$ are positive definite. Let $g_{1}(\cdot)$,
$g_{2}(\cdot)$ and $g_{3}(\cdot)$ be functions in $\mathbb{R}^{l_{1}}$,
$\mathbb{R}^{l_{2}}$ and $\mathbb{R}^{l_{3}}$, respectively, that
are integrable and continuous almost everywhere. Assume:

(i) $g_{j}(\cdot)$ is bounded in $\mathbb{R}^{l_{j}}$ for $j=1,2$;

(ii) $g_{j}(w)$ depends on $w$ only through $\|w\|$ and is a decreasing
function of $\|w\|$, for $j=1,2$; and

(iii) there exists a non-negative integer $l$ such that $\int_{\mathbb{R}^{l_{3}}}\prod_{k=1}^{l_{1}'+l_{2}'+l}w_{i_{k}}g_{3}(w)\,dw<\infty$
for any coordinates $(w_{i_{1}},\ldots,w_{i_{l_{1}'+l_{2}'+l}})$
of $w$. 

\noindent
Then the function,
\begin{eqnarray*}
%\int_{\mathbb{R}^{l_{1}'}}\int_{\mathbb{R}^{l_{2}'}}\int_{\mathbb{R}^{l_{3}}}
\int\int\int P_{l}(w_{1},w_{2},w_{3})\left|g_{1}(Aw_{1}+x_{1}w_{2}+x_{2}w_{3}+x_{3})-g_{1}(Aw_{1})\right| g_{2}(Bw_{2}+x_{4}w_{3}+x_{5})g_{3}(w_{3})\,dw_{3}dw_{2}dw_{1},
\end{eqnarray*}
where $x_{1}\in\mathbb{R}^{l_{1}\times l_{2}'}$, $x_{2}\in\mathbb{R}^{l_{1}\times l_{3}}$, $x_{4}\in\mathbb{R}^{l_{2}\times l_{3}}$, $x_{3}\in\mathbb{R}^{l_{1}}$ and $x_{5}\in\mathbb{R}^{l_{2}}$, is continuous almost everywhere. 

\end{lemma}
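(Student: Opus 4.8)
The plan is to prove that the functional displayed in the statement, regarded as a map $F$ of the tuple $x=(x_1,x_2,x_3,x_4,x_5)$, is continuous at Lebesgue-almost every point, by checking the sequential criterion: fix $x^{*}$ in a co-null set $\mathcal G$ to be identified below, take an arbitrary sequence $x^{(n)}\to x^{*}$, and show $F(x^{(n)})\to F(x^{*})$ by dominated convergence. The integrand depends on $x$ only through the argument $Aw_1+x_1w_2+x_2w_3+x_3$ of $g_1$ and the argument $Bw_2+x_4w_3+x_5$ of $g_2$; the factors $P_l(w_1,w_2,w_3)$, $g_1(Aw_1)$ and $g_3(w_3)$ do not involve $x$. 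So two ingredients are needed: pointwise convergence of the integrand for a.e. $(w_1,w_2,w_3)$, and an integrable majorant that is uniform for $x$ in a fixed bounded neighbourhood of $x^{*}$.

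For the pointwise convergence, as $x^{(n)}\to x^{*}$ the two arguments above converge, so by the a.e.-continuity of $g_1$ and $g_2$ the integrand converges at every $(w_1,w_2,w_3)$ for which $Aw_1+x_1^{*}w_2+x_2^{*}w_3+x_3^{*}$ misses the Lebesgue-null discontinuity set $D_1$ of $g_1$ and $Bw_2+x_4^{*}w_3+x_5^{*}$ misses the null discontinuity set $D_2$ of $g_2$. The set of exceptional $(w_1,w_2,w_3)$ is itself null provided the affine maps $(w_1,w_2,w_3)\mapsto Aw_1+x_1^{*}w_2+x_2^{*}w_3+x_3^{*}$ and $(w_2,w_3)\mapsto Bw_2+x_4^{*}w_3+x_5^{*}$ have full rank $l_1$, resp. $l_2$, onto their target spaces: splitting off $l_1$ (resp. $l_2$) coordinates that are mapped diffeomorphically onto $\mathbb{R}^{l_1}$ (resp. $\mathbb{R}^{l_2}$) and applying Fubini, the preimage of a null set is null, and a translate of a null set is null. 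Since $A^{T}A$ and $B^{T}B$ are already nonsingular, this rank condition is met for Lebesgue-a.e. choice of the matrices $(x_1,x_2)$ and $x_4$; I take $\mathcal G$ to be this co-null collection of tuples, and this restriction is precisely the origin of the ``almost everywhere'' in the conclusion.

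For the majorant, use $|g_1(Aw_1+x_1w_2+x_2w_3+x_3)-g_1(Aw_1)|\le g_1(Aw_1+x_1w_2+x_2w_3+x_3)+g_1(Aw_1)$ and estimate the triple integral iteratively, in $w_1$, then $w_2$, then $w_3$, via Tonelli. By (ii), $g_1=\bar g_1(\|\cdot\|)$ with $\bar g_1$ decreasing, and $\|Aw_1\|\ge\lambda_{\min}(A)\|w_1\|$ since $A^{T}A\succ0$; on the region $\{\lambda_{\min}(A)\|w_1\|\ge 2\|x_1w_2+x_2w_3+x_3\|\}$ one gets $g_1(Aw_1+x_1w_2+x_2w_3+x_3)\le\bar g_1\{\tfrac{1}{2}\lambda_{\min}(A)\|w_1\|\}$, whereas on its complement $\|w_1\|\le C(\|w_2\|+\|w_3\|+1)$ uniformly for $x$ in the neighbourhood, where one uses only $g_1\le g_1(0)$. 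Carrying the polynomial weight through the $w_1$-integration then produces a polynomial in $(w_2,w_3)$ of degree at most $l+l_1'$ times finite constants; the same splitting applied to $B$, $g_2$ and $w_2$ turns this into a polynomial in $w_3$ of degree at most $l+l_1'+l_2'$; and the final integration against $g_3(w_3)$ is finite by the moment bound (iii). Since this bound is independent of $x$ within the neighbourhood, dominated convergence yields $F(x^{(n)})\to F(x^{*})$, hence continuity of $F$ at every point of $\mathcal G$.

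I expect the main obstacle to be the first ingredient: one must guarantee that the co-null collection of ``good'' integration points remains co-null after being pulled back through the affine maps, which forces the full-rank/submersion condition on $x_1,x_2,x_4$ and therefore the a.e. qualifier in the statement. The work in the majorant step is routine but needs care in tracking how the polynomial degree grows through the iterated integration so as to match exactly the moment order $l_1'+l_2'+l$ assumed in (iii); along the way one also uses that $g_1$ and $g_2$ have finite polynomial moments of orders $l$ and $l+l_1'$, which holds in every application of the lemma since there $g_1$ and $g_2$ are Gaussian densities or kernels satisfying Condition \ref{cond:kernel_prop}(ii).
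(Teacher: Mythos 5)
Your proof is correct and follows essentially the same route as the paper's: the triangle-inequality bound $|g_1(\cdot)-g_1(Aw_1)|\le g_1(\cdot)+g_1(Aw_1)$, the splitting of the $w_1$- and $w_2$-domains into a region where $\|Aw_1\|$ (resp.\ $\|Bw_2\|$) dominates the perturbation---so radial monotonicity gives the majorant $\bar{g}_1(\lambda_{\min}(A)\|w_1\|/2)$---and a complementary ball of polynomially growing radius, followed by dominated convergence against the moment bound (iii). Your explicit observations that the argument additionally requires finite polynomial moments of $g_1$ and $g_2$ (of orders $l$ and $l+l_1'$, satisfied by the Gaussian densities and kernels in every application) and your concrete identification of a co-null set of continuity points are more careful than the paper's write-up, but they do not change the substance of the argument.
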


\begin{proof}
Let $m_{A}$ and $m_{B}$ be the lower bound of $A$ and $B$ respectively.
For any $(x_{01,}\ldots,x_{05})\in\mathbb{R}^{l_{1}\times l_{2}'}\times\mathbb{R}^{l_{1}\times l_{3}}\times\mathbb{R}^{l_{2}\times l_{3}}\times\mathbb{R}^{l_{1}}\times\mathbb{R}^{l_{2}}$
such that the integrand in the target integral is continuous, consider
any sequence $(x_{n1},\ldots,x_{n5})$ converging to $(x_{01,}\ldots,x_{05})$.
It is sufficient to show the convergence of the target function at
$(x_{n1},\ldots,x_{n5})$. Let $V_{A}=\{w_{1}:\|Aw_{1}\|/2\geq\sup_{(x_{n1},x_{n2},x_{n3})}\|x_{n1}w_{2}+x_{n2}w_{3}+x_{n3}\|\}$,
$V_{B}=\{w_{2}:\|Bw_{2}\|/2\geq\sup_{(x_{n4},x_{n5})}\|x_{n4}w_{3}+x_{n5}\|\}$,
$U_{A}=\{w_{1}:\|w_{1}\|\leq4m_{A}^{-1}(\|x_{01}w_{2}\|+\|x_{02}w_{3}\|+\|x_{03}\|)\}$
and $U_{B}=\{w_{2}:\|w_{2}\|\leq4m_{B}^{-1}(\|x_{04}w_{3}\|+\|x_{05}\|)\}$.
We have $V_{A}^{c}\subset U_{A}$ and $V_{B}^{c}\subset U_{B}$. Then
according to the following upper bounds and condition (iii), 

\begin{align*}
 & \left|g_{1}(Aw_{1}+x_{n1}w_{2}+x_{n2}w_{3}+x_{n3})-g_{1}(Aw_{1})\right|\leq g_{1}(Aw_{1}+x_{n1}w_{2}+x_{n2}w_{3}+x_{n3})+g_{1}(Aw_{1}),\\
 & g_{1}(Aw_{1}+x_{n1}w_{2}+x_{n2}w_{3}+x_{n3})\leq\bar{g}_{1}(m_{A}\|w_{1}\|/2)\mathbbm{1}_{\{w_{1}\in V_{A}\}}+\sup_{w\in\mathbb{R}^{l_{1}}}g_{1}(w)\mathbbm{1}_{\{w_{1}\in U_{A}\}},\\
 & g_{2}(Bw_{2}+x_{4}w_{3}+x_{5})\leq\bar{g}_{2}(m_{B}\|w_{2}\|/2)\mathbbm{1}_{\{w_{2}\in V_{B}\}}+\sup_{w\in\mathbb{R}^{l_{2}}}g_{2}(w)\mathbbm{1}_{\{w_{2}\in U_{B}\}},
\end{align*}
where $g_{1}(w)=\bar{g}_{1}(\|w\|)$ and $g_{2}(w)=\bar{g}_{2}(\|w\|)$,
by applying the dominated convergence theorem, the target function
at $(x_{n1,}\ldots,x_{n5})$ converges to its value at $(x_{01,}\ldots,x_{05})$.
%Therefore the lemma holds.
\end{proof}

\begin{proof}[of Lemma \ref{lem:ABC_likelihood_expansion}] The first
part holds according to Lemma 5 of \cite{Li/Fearnhead:2018}.
For the second part, when $c_{\varepsilon}=\infty$, by the transformation $v'=v'(v,t)$,
\begin{align*}
\int_{\mathbb{R}^{d}}\int_{t(B_{\delta})}P_{l}(v)g_{n}(t,v)\,dtdv & =\int_{\mathbb{R}^{d}}\int_{t(B_{\delta})}P_{l}\left\{Ds(\theta_{0})t+\frac{1}{a_{n}\varepsilon_{n}}v'-\frac{1}{a_{n}\varepsilon_{n}}A(\theta_{0})^{1/2}W_{{\rm obs}}\right\}g_{n}'(t,v')\,dtdv'.
\end{align*}
By applying Lemma \ref{lem:continuity_of_integrals} and the
continuous mapping theorem in Lemma \ref{lem:basic_asymp} to the right hand
side of the above when $c_{\varepsilon}=\infty$, and to $\int_{\mathbb{R}^{d}}\int_{t(B_{\delta})}P_{l}(v)g_{n}(t,v)\,dtdv$
when $c_{\varepsilon}<\infty$, and using $\int_{\mathbb{R}^{p}}g(t,v)\,dt=|A(\theta_{0})|^{-1/2}G(v)$,
the lemma holds. 
\end{proof}

\begin{proof}[of Lemma \ref{lem:ABC_probability_expand}] (a), (b)
and the first part of (c) hold immediately by Lemma \ref{lem:LF_results}.
The second part of (c) is stated in the proof of Theorem 1 of  \cite{Li/Fearnhead:2018}.
\end{proof}

\begin{lemma}\label{lem:normal_leading_for_scaled_theta} Assume
conditions \ref{cond:par_true}--\ref{cond:sum_approx_tail}. 

(i) If $c_{\varepsilon}\in(0,\infty)$ then $\Pi_{\varepsilon}\{a_{n}(\theta-\theta_{\varepsilon})\in A\mid s_{{\rm obs}}\}$
and $\tPi_{\varepsilon}\{a_{n}(\theta-\ttheta_{\varepsilon})\in A\mid s_{{\rm obs}}\}$
have the same limit in distribution.

(ii) If $c_{\varepsilon}=0$ or $c_{\varepsilon}=0\infty$ then
\[\mbox{\rm sup}_{A\in\mathscr{B}^{p}}\left|\Pi_{\varepsilon}\{a_{n,\varepsilon}(\theta-\theta_{\varepsilon})\in A\mid s_{{\rm obs}}\}-\tPi_{\varepsilon}\{a_{n,\varepsilon}(\theta-\ttheta_{\varepsilon})\in A\mid s_{{\rm obs}}\}\right|=o_{p}(1).\]
%when $c_{\varepsilon}=0$ or $\infty$.

(iii)  If Condition \ref{cond:likelihood_moments} holds then 
\[\mbox{{\rm sup}}_{A\in\mathscr{B}^{p}}\left|\Pi_{\varepsilon}\{a_{n}(\theta^{*}-\theta_{\varepsilon}^{*})\in A\mid s_{{\rm obs}}\}
-\tPi_{\varepsilon}\{a_{n}(\theta^{*}-\ttheta_{\varepsilon}^{*})\in A\mid s_{{\rm obs}}\}\right|=o_{p}(1).\]

\end{lemma}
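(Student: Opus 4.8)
The plan is to isolate the two sources of discrepancy between $\Pi_\varepsilon$ and $\tPi_\varepsilon$, evaluated at the scaled and centred parameter: first, replacing $\Pi_\varepsilon$ by its normal counterpart $\tPi_\varepsilon$, and second, replacing the centring $\theta_\varepsilon$ (or $\theta_\varepsilon^{*}$) by $\ttheta_\varepsilon$ (or $\ttheta_\varepsilon^{*}$). For the first of these, combining parts (a) and (b) of Lemma \ref{lem:ABC_probability_expand} gives, for a suitable $\delta<\delta_0$, $\sup_{A'\in\mathscr{B}^{p}}|\Pi_\varepsilon(\theta\in A'\mid s_{{\rm obs}})-\tPi_\varepsilon(\theta\in A'\mid s_{{\rm obs}})|=o_p(1)$, with the analogue for $\theta^{*}$ following from parts (a) and (b) of Lemma \ref{lem:adjABC_probability_expand}. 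Applying this with $A'=\theta_\varepsilon+a_{n,\varepsilon}^{-1}A$, and with $A'=\theta_\varepsilon^{*}+a_n^{-1}A$ in the adjusted case, reduces all three statements to the corresponding comparison in which both distributions are $\tPi_\varepsilon$ but the centrings differ.

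First I would write $\delta_n=a_{n,\varepsilon}(\theta_\varepsilon-\ttheta_\varepsilon)$, so that $\tPi_\varepsilon\{a_{n,\varepsilon}(\theta-\theta_\varepsilon)\in A\mid s_{{\rm obs}}\}=\tPi_\varepsilon\{a_{n,\varepsilon}(\theta-\ttheta_\varepsilon)\in A+\delta_n\mid s_{{\rm obs}}\}$, where $\delta_n=o_p(1)$ by Lemma \ref{lem:LF_results}(iv) when $\varepsilon_n=o(a_n^{-1/2})$, by Lemma \ref{lem:ABC_probability_expand}(c) in the sharper form needed when $a_{n,\varepsilon}=a_n$, and by Lemma \ref{lem:adjABC_probability_expand}(c) for $\theta^{*}$. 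It then remains to show this shift by $\delta_n$ is asymptotically negligible, and here I would use the explicit form of $\tPi_\varepsilon$: the law of $a_{n,\varepsilon}(\theta-\ttheta_\varepsilon)$ under $\tPi_\varepsilon$ has density $h_n(t)/\tpi_{\mathcal{P}}(1)$, where $h_n(t)=a_{n,\varepsilon}^{-p}\pi_\delta(\ttheta_\varepsilon+a_{n,\varepsilon}^{-1}t)\int_{\mathbb{R}^{d}}\ftil_n(s\mid\ttheta_\varepsilon+a_{n,\varepsilon}^{-1}t)K\{\varepsilon_n^{-1}(s-s_{{\rm obs}})\}\varepsilon_n^{-d}\,ds$ and $\tpi_{\mathcal{P}}(1)=\Theta_p(a_{n,\varepsilon}^{d-p})$ by Lemma \ref{lem:LF_results}(iii). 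Since $\ftil_n(\cdot\mid\theta)$ is Gaussian, after a change of variable the inner integral is a convolution of a Gaussian density with the kernel $K$; using Conditions \ref{cond:kernel_prop} and \ref{cond:sum_conv} together with the changes of variables already used in Lemma \ref{lem:ABC_likelihood_expansion}, the aim is to show that $h_n/\tpi_{\mathcal{P}}(1)$ is, with probability tending to one, uniformly bounded and has a modulus of continuity under translation that is uniform in $n$, that is $\int_{\mathbb{R}^{p}}|h_n(t+\delta)-h_n(t)|\,dt/\tpi_{\mathcal{P}}(1)\le\omega_n(\|\delta\|)$ with $\limsup_n\omega_n(u)\to0$ as $u\to0$. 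Then $\sup_{A\in\mathscr{B}^{p}}|\tPi_\varepsilon\{a_{n,\varepsilon}(\theta-\ttheta_\varepsilon)\in A+\delta_n\mid s_{{\rm obs}}\}-\tPi_\varepsilon\{a_{n,\varepsilon}(\theta-\ttheta_\varepsilon)\in A\mid s_{{\rm obs}}\}|\le\omega_n(\|\delta_n\|)=o_p(1)$, which gives (ii) and (iii); for (i), where $c_\varepsilon\in(0,\infty)$, the same estimate applied for each fixed $A$ shows the two sequences differ by $o_p(1)$, so by Slutsky's lemma they share the same limit in distribution.

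The hard part will be the last step: controlling the effect of the $o_p(1)$ translation uniformly over all Borel sets requires the uniform-in-$n$ equicontinuity of the scaled normal-approximate posterior, not merely its boundedness. This has to be handled separately in the two regimes that mirror Lemma \ref{lem:ABC_likelihood_expansion}: when $c_\varepsilon<\infty$ the Gaussian factor controls the shape of $h_n$, whereas when $c_\varepsilon=\infty$ the relevant scale is $\varepsilon_n^{-1}$ and it is the smoothness of $K$ that must be exploited, so the estimate on $\omega_n$ is obtained only after the transformation $v'=v'(v,t)$ used there.
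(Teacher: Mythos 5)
Your decomposition agrees with the paper's up to the last step: both first swap $\Pi_{\varepsilon}$ for $\tPi_{\varepsilon}$ uniformly over Borel sets via Lemma \ref{lem:ABC_probability_expand}(a)--(b) (resp.\ Lemma \ref{lem:adjABC_probability_expand}(a)--(b)), and both then reduce the change of centring to the effect of a translation by $\lambda_n=a_{n,\varepsilon}(\theta_{\varepsilon}-\ttheta_{\varepsilon})=o_p(1)$. Where you diverge is in controlling that translation. You propose to prove a uniform-in-$n$ $L^1$ modulus of continuity under translation for the pre-limit densities $h_n$, which is genuinely extra work that your sketch leaves open; the paper avoids it entirely by adding and subtracting the limit density $\psi$ already obtained in the proof of Proposition \ref{thm:ABC_bad_convergence}. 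Concretely, for $c_{\varepsilon}=0$ or $\infty$ the paper bounds the sup by four terms, two of which are the total-variation distances $\sup_A|\tPi_{\varepsilon}\{\cdot\in A\}-\int_A\psi|$ (with and without the shift), both $o_p(1)$ by the already-established $L^1$ convergence of $\tPi_{\varepsilon}$, leaving only $\sup_A|\int_{A+\lambda_n}\psi-\int_A\psi|\le\int|\psi(t-\lambda_n)-\psi(t)|\,dt$, which is translation continuity of the \emph{fixed} limit $\psi$ and follows from the continuous mapping theorem. For $c_{\varepsilon}\in(0,\infty)$, where the limit is random and only convergence in distribution is available, the paper instead works with the explicit leading term and uses the uniform bound $N\{t;\mu_n(v),I(\theta_0)^{-1}\}\le c$ for a fixed $A$, then concludes by your Slutsky argument. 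Two cautions if you pursue your route: (a) in the $c_{\varepsilon}=\infty$ regime you cannot exploit ``the smoothness of $K$'', since Condition \ref{cond:kernel_prop} admits discontinuous kernels such as the uniform kernel; what you actually need is $L^1$-continuity under translation of $t\mapsto K\{Ds(\theta_0)t\}$, which holds for any integrable function, together with the observation that the Gaussian convolution at scale $(a_n\varepsilon_n)^{-1}$ can only improve the modulus; (b) since $h_n$ depends on $s_{\rm obs}$, your modulus bound must be shown to hold with probability tending to one, and you need $\omega_n$ monotone in $u$ to pass from $\|\lambda_n\|=o_p(1)$ to $\omega_n(\|\lambda_n\|)=o_p(1)$. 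With those points attended to your argument closes, but the paper's reuse of the Proposition \ref{thm:ABC_bad_convergence} limits is the shorter path.
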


\begin{proof}

Let $\lambda_{n}=a_{n,\varepsilon}(\theta_{\varepsilon}-\ttheta_{\varepsilon})$,
and by Lemma \ref{lem:ABC_probability_expand}(c), $\lambda_{n}=o_{p}(1)$.
When $c_{\varepsilon}\in(0,\infty)$, for any $A\in\mathscr{B}^{p}$,
decompose $\Pi_{\varepsilon}\{a_{n}(\theta-\theta_{\varepsilon})\in A\mid s_{{\rm obs}}\}$
into the following three terms, 
\begin{align*}
 & \left[\Pi_{\varepsilon}\{a_{n}(\theta-\theta_{\varepsilon})\in A\mid s_{{\rm obs}}\}-\tPi_{\varepsilon}\{a_{n}(\theta-\theta_{\varepsilon})\in A\mid s_{{\rm obs}}\}\right]\\
+ & \left[\tPi_{\varepsilon}\{a_{n}(\theta-\ttheta_{\varepsilon})\in A+\lambda_{n}\mid s_{{\rm obs}}\}-\tPi_{\varepsilon}\{a_{n}(\theta-\ttheta_{\varepsilon})\in A\mid s_{{\rm obs}}\}\right]\\
+ & \tPi_{\varepsilon}\{a_{n}(\theta-\ttheta_{\varepsilon})\in A\mid s_{{\rm obs}}\}.
\end{align*}
For (i) to hold, it is sufficient that the first two terms in the
above are $o_{p}(1)$. The first term is $o_{p}(1)$ by Lemma \ref{lem:ABC_probability_expand}.
For the second term to be $o_{p}(1)$, given the leading term of $\tPi_{\varepsilon}\{a_{n}(\theta-\ttheta_{\varepsilon})\in A\mid s_{{\rm obs}}\}$
stated in the proof of Proposition \ref{thm:ABC_bad_convergence}
in the main text, it is sufficient that 
\begin{align*}
 & \sup_{v\in\mathbb{R}^{d}}\left|\left(\int_{A+\lambda_{n}}-\int_{A}\right)N\{t;\mu_{n}(v),I(\theta_{0})^{-1}\}\,dt\right|=o_{p}(1).
\end{align*}
This holds by noting that the left hand side of the above is bounded
by $(\int_{A+\lambda_{n}}-\int_{A})c\,dt$ for some constant $c$
and this upper bound is $o_{p}(1)$ since $\lambda_{n}=o_{p}(1)$.
Therefore (i) holds.

When $c_{\varepsilon}=0$ or $\infty$, $\mbox{{\rm sup}}_{A\in\mathscr{B}^{p}}\left|\Pi_{\varepsilon}\{a_{n,\varepsilon}(\theta-\theta_{\varepsilon})\in A\mid s_{{\rm obs}}\}-\tPi_{\varepsilon}\{a_{n,\varepsilon}(\theta-\ttheta_{\varepsilon})\in A\mid s_{{\rm obs}}\}\right|$
is bounded by
\begin{align}
 & \mbox{{\rm sup}}_{A\in\mathscr{B}^{p}}\left|\Pi_{\varepsilon}\{a_{n,\varepsilon}(\theta-\theta_{\varepsilon})\in A\mid s_{{\rm obs}}\}-\tPi_{\varepsilon}\{a_{n,\varepsilon}(\theta-\theta_{\varepsilon})\in A\mid s_{{\rm obs}}\}\right|\nonumber \\
+ & \mbox{{\rm sup}}_{A\in\mathscr{B}^{p}}\left|\tPi_{\varepsilon}\{a_{n,\varepsilon}(\theta-\ttheta_{\varepsilon})\in A+\lambda_{n}\mid s_{{\rm obs}}\}-\int_{A+\lambda_{n}}\psi(t)\,dt\right|\nonumber \\
+ & \mbox{{\rm sup}}_{A\in\mathscr{B}^{p}}\left|\tPi_{\varepsilon}\{a_{n,\varepsilon}(\theta-\ttheta_{\varepsilon})\in A\mid s_{{\rm obs}}\}-\int_{A}\psi(t)\,dt\right|\nonumber \\
+ & \mbox{{\rm sup}}_{A\in\mathscr{B}^{p}}\left|\int_{A+\lambda_{n}}\psi(t)\,dt-\int_{A}\psi(t)\,dt\right|.\label{eq:decomposed_upper_bound}
\end{align}
With similar arguments as before, the first three terms are $o_{p}(1)$.
For the fourth term, by transforming $t$ to $t+\lambda_{n}$, it
is upper bounded by $\int_{\mathbb{R}^{p}}|\psi(t-\lambda_{n})-\psi(t)|\,dt$
which is $o_{p}(1)$ by the continuous mapping theorem. Therefore
(ii) holds.

For (iii), the left hand side of the equation has the decomposed upper
bound similar to \eqref{eq:decomposed_upper_bound}, with $\theta$,
$\theta_{\varepsilon}$, $\ttheta_{\varepsilon}$ and $\psi(t)$ replaced
by $\theta^{*}$, $\theta_{\varepsilon}^{*}$, $\ttheta_{\varepsilon}^{*}$
and $N\{t;0,I(\theta_{0})^{-1}\}$. Then by Lemma \ref{lem:adjABC_probability_expand},
using the leading term of $\Pi_{\varepsilon}\{a_{n}(\theta^{*}-\theta_{\varepsilon}^{*})\in A\mid s_{{\rm obs}}\}$
stated in the proof of Theorem \ref{thm:ABC_good_convergence}, and
similar arguments to those used for the fourth term of \eqref{eq:decomposed_upper_bound},
it can be seen that this upper bound is $o_{p}(1)$. Therefore (iii)
holds.
\end{proof}

\subsection*{Proof for Results in Section \ref{adjABC_posterior}}

To prove Lemmas \ref{lem:reg_coef} and \ref{lem:adjABC_probability_expand},
some notation regarding the regression adjusted approximate Bayesian computation posterior, similar
to those defined previously, are needed. Consider transformations
$t=t(\theta)$ and $v=v(s)$. For $A\subset\mathbbm{R}^{p}$ and the
scalar function $h(t,v)$ in $\mathbb{R}^{p}\times\mathbb{R}^{d}$,
let $\tpi_{A,tv}(h)=\int_{t(A)}\int_{\mathbb{R}^{d}}h(t,v)\tpi_{\varepsilon,tv}(t,v)\,dvdt$.

\begin{proof}[of Lemma \ref{lem:reg_coef}]

Since $\beta_{\varepsilon}=\mbox{cov}_{\varepsilon}(\theta,s)\mbox{var}_{\varepsilon}(s)^{-1}$,
to evaluate the covariance matrices, we need to evaluate $\pi_{\mathbb{R}^{p}}\{(\theta-\theta_{0})^{k_{1}}(s-s_{{\rm obs}})^{k_{2}}\}/\pi_{\mathbb{R}^{p}}(1)$
for $(k_{1},k_{2})=(0,0),$ $(1,0),$ $(1,1),$ $(0,1)$ and $(0,2)$.

First of all, we show that $\pi_{B_{\delta}^{c}}\{(\theta-\theta_{0})^{k_{1}}(s-s_{{\rm obs}})^{k_{2}}\}$
is ignorable for any $\delta<\delta_{0}$ by showing that it is $O_{p}(e^{-a_{n,\varepsilon}^{\alpha_{\delta}}c_{\delta}})$
for some positive constants $c_{\delta}$ and $\alpha_{\delta}$.
By dividing $\mathbb{R}^{d}$ into $\{v:\|\varepsilon_{n}v\|\leq\delta'/3\}$
and its complement,
\begin{align}
 & \sup_{\theta\in B_{\delta}^{c}}\int_{\mathbb{R}^{d}}(s-s_{{\rm obs}})^{k_{2}}f_{n}(s\mid\theta)K\left(\frac{s-s_{{\rm obs}}}{\varepsilon_{n}}\right)\varepsilon_{n}^{-d}\,ds\nonumber \\
\leq & \sup_{\theta\in B_{\delta}^{c}}\left\{ \sup_{\|s-s_{{\rm obs}}\|\leq\delta'/3}f_{n}(s\mid\theta)\int_{\mathbb{R}^{d}}(s-s_{{\rm obs}})^{k_{2}}K\left(\frac{s-s_{{\rm obs}}}{\varepsilon_{n}}\right)\varepsilon_{n}^{-d}\,ds\right\} \nonumber \\
 & +\overline{K}\{\lambda_{{\rm min}}(\Lambda)\varepsilon_{n}^{-1}\delta'/3\}\varepsilon_{n}^{-d}\int_{\mathbb{R}^{d}}(s-s_{{\rm obs}})^{k_{2}}f_{n}(s\mid\theta)\,ds.\label{eq:Lemma3_1}
\end{align}
By Condition \ref{cond:kernel_prop}(ii), Condition \ref{cond:likelihood_moments}
and following the arguments in the proof of Lemma 3 of \cite{Li/Fearnhead:2018},
the right hand side of \eqref{eq:Lemma3_1} is $O_{p}(e^{-a_{n,\varepsilon}^{\alpha_{\delta}}c_{\delta}})$,
which is sufficient for $\pi_{B_{\delta}^{c}}\{(\theta-\theta_{0})^{k_{1}}(s-s_{{\rm obs}})^{k_{2}}\}$
to be $O_{p}(e^{-a_{n,\varepsilon}^{\alpha_{\delta}}c_{\delta}})$.

For the integration over $B_{\delta}$, by Lemma \ref{lem:LF_results}
(ii), 
\begin{eqnarray*}
\lefteqn{  \frac{\pi_{B_{\delta}}\{(\theta-\theta_{0})^{k_{1}}(s-s_{{\rm obs}})^{k_{2}}\}}{\pi_{B_{\delta}}(1)}
=  a_{n,\varepsilon}^{-k_{1}}\varepsilon_{n}^{k_{2}}\left\{ \frac{\tpi_{B_{\delta},tv}(t^{k_{1}}v^{k_{2}})}{\tpi_{B_{\delta},tv}(1)}+ \right.} & & \\
& & \left. \alpha_{n}^{-1}\frac{\int_{t(B_{\delta})}\int t^{k_{1}}v^{k_{2}}\pi(\theta_{0}+a_{n,\varepsilon}^{-1}t)r_{n}(s_{{\rm obs}}+\varepsilon_{n}v\mid\theta_{0}+a_{n,\varepsilon}^{-1}t)K(v)\,dvdt}{\tpi_{B_{\delta},tv}(1)}\right\} 
\{1+O_{p}(\alpha_{n}^{-1})\}
\end{eqnarray*}
where $r_{n}(s\mid\theta)$ is the scaled remainder $\alpha_{n}\{f_{n}(s\mid\theta)-\ftil_{n}(s\mid\theta)\}$.
In the above, the second term in the first brackets is $O_{p}(\alpha_{n}^{-1})$
by the proof of Lemma 6 of \cite{Li/Fearnhead:2018}. Then
\begin{align*}
\frac{\pi_{B_{\delta}}\{(\theta-\theta_{0})^{k_{1}}(s-s_{{\rm obs}})^{k_{2}}\}}{\pi_{B_{\delta}}(1)} & =a_{n,\varepsilon}^{-k_{1}}\varepsilon_{n}^{k_{2}}\left\{ \frac{\tpi_{B_{\delta},tv}(t^{k_{1}}v^{k_{2}})}{\tpi_{B_{\delta},tv}(1)}+O_{p}(\alpha_{n}^{-1})\right\} ,
\end{align*}
and the moments $\tpi_{B_{\delta},tv}(t^{k_{1}}v^{k_{2}})/\tpi_{B_{\delta},tv}(1)$
need to be evaluated. Theorem 1 of \cite{Li/Fearnhead:2018} gives the value of $\tpi_{B_{\delta},tv}(t)/\tpi_{B_{\delta},tv}(1)$,
%it has been evaluated in the proof of \citet[Theorem 1]{Li/Fearnhead:2018}
and this is obtained by substituting the leading term of $\tpi_{\varepsilon,tv}(t,v)$, that is
$\pi(\theta_{0})g_{n}(t,v)$ as stated in Lemma \ref{lem:ABC_likelihood_expansion}, into the integrands. 
The other moments can be evaluated similarly, and give
\begin{align}
\frac{\tpi_{B_{\delta},tv}(t^{k_{1}}v^{k_{2}})}{\tpi_{B_{\delta},tv}(1)} & =\begin{cases}
b_{n}^{-1}\beta_{0}\{A(\theta_{0})^{1/2}W_{{\rm obs}}+a_{n}\varepsilon_{n}E_{G_{n}}(v)\}, & (k_{1},k_{2})=(1,0),\\
b_{n}^{-1}\beta_{0}\{A(\theta_{0})^{1/2}W_{{\rm obs}}E_{G_{n}}(v]+a_{n}\varepsilon_{n}E_{G_{n}}(vv^{T})\}, & (k_{1},k_{2})=(1,1),\\
E_{G_{n}}(v), & (k_{1},k_{2})=(0,1),\\
E_{G_{n}}(vv^{T}), & (k_{1},k_{2})=(0,2),
\end{cases}\nonumber \\
 & +O_{p}(a_{n,\varepsilon}^{-1})+O_{p}(a_{n}^{2}\varepsilon_{n}^{4}),\label{eq:tv_moments_expand}
\end{align}
where $b_{n}=1$ when $c_{\varepsilon}<\infty$, and $a_n\varepsilon_n$ when $c_{\varepsilon}=\infty$. By Lemma \ref{lem:ABC_likelihood_expansion}, $E_{G_{n}}(vv^{T})=\Theta_{p}(1)$.
Since $\alpha_{n}^{-1}=o(a_{n}^{-2/5})$, 
$\mbox{cov}_{\varepsilon}(\theta,s)=$$\varepsilon_{n}^{2}\beta_{0}\mbox{var}_{G_{n}}(v)+o_{p}(a_{n}^{-2/5}\varepsilon_{n}^2)$
and $\mbox{var}_{\varepsilon}(s)=\varepsilon_{n}^{2}\mbox{var}_{G_{n}}(v)\{1+o_{p}(a_{n}^{-2/5})\}$.
Thus
\begin{align}
\beta_{\varepsilon} & =\beta_{0}+o_{p}(a_{n}^{-2/5}),\label{eq:reg_coef_orders}
\end{align}
 and the lemma holds. \end{proof} 

%{\textbf Supplementary Material for ``Improved Convergence of Regression Adjusted Approximate Bayesian Computation''}

For $A\subset\mathbbm{R}^{p}$ and $B\subset\mathbb{R}^{d}$, let
$\pi(A,B)=\int_{A}\int_{B}\pi(\theta)f_{n}(s\mid\theta)K\{\varepsilon_{n}^{-1}(s-s_{{\rm obs}})\}\varepsilon_{n}^{-d}\,dsd\theta$
and $\tpi(A,B)=\int_{A}\int_{B}\pi(\theta)\ftil_{n}(s\mid\theta)K\{\varepsilon_{n}^{-1}(s-s_{{\rm obs}})\}\varepsilon_{n}^{-d}\,dsd\theta$.
Denote the marginal mean values of $s$ for $\pi_{\varepsilon}(\theta,s\mid s_{{\rm obs}})$
and $\tpi_{\varepsilon}(\theta,s\mid s_{{\rm obs}})$ by $s_{\varepsilon}$
and $\ts_{\varepsilon}$ respectively.

\begin{proof}[of Lemma \ref{lem:adjABC_probability_expand}]

For (a), write $\Pi_{\varepsilon}(\theta^{*}\in B_{\delta}^{c}\mid s_{{\rm obs}})$
as $\pi[\mathbb{R}^{p},\{s:\theta^{*}(\theta,s)\in B_{\delta}^{c}\}]/\pi(\mathbb{R}^{p},\mathbb{R}^{d})$.
By Lemma \ref{lem:LF_results}, $\pi(\mathbb{R}^{p},\mathbb{R}^{d})=\pi_{\mathcal{P}}(1)=\Theta_{p}(a_{n,\varepsilon}^{d-p})$.
By the triangle inequality, 
\begin{align} \label{eq:tri}
\pi[\mathbb{R}^{p},\{s:\theta^{*}(\theta,s)\in B_{\delta}^{c}\}] & \leq\pi(B_{\delta/2}^{c},\mathbb{R}^{d})+\pi[B_{\delta/2},\{s:\|\beta_{\varepsilon}(s-s_{{\rm obs}})\|\geq\delta/2\}],
\end{align}
and it is sufficient that the right hand side of the above inequality
is $o_{p}(1)$. Since its first term is $\pi_{B_{\delta/2}^{c}}(1)$,
by Lemma \ref{lem:LF_results} the first term is $o_{p}(1)$.
%, and
%it remains to show that the second term is $o_{p}(1)$. The arguments
%below discuss the case when $\beta_{\varepsilon}$ is bounded in
%probability as $n\rightarrow\infty$. 

When $\varepsilon_{n}=\Omega(a_{n}^{-7/5})$ or $\Theta(a_{n}^{-7/5})$,
by \eqref{eq:reg_coef_orders}, $\beta_{\varepsilon}-\beta_{0}=o_{p}(1)$
and so $\beta_{\varepsilon}$ is bounded in probability. For
any constant $\beta_{{\rm sup}}>0$ and $\beta\in\mathbb{R}^{p\times d}$
satisfying $\beta\leq\beta_{{\rm sup}}$,
\begin{align*}
\pi[B_{\delta/2},\{s:\|\beta(s-s_{{\rm obs}})\|\geq\delta/2\}] & \leq K\left(\varepsilon^{-1}\frac{\delta}{2\beta_{{\rm sup}}}\right)\varepsilon_{n}^{-d},
\end{align*}
and by Condition \ref{cond:kernel_prop}(iv), the second term in (\ref{eq:tri}) is $o_{p}(1)$.

When $\varepsilon_{n}=o(a_{n}^{-7/5})$, $\beta_{\varepsilon}$ is
unbounded and the above argument does not apply. Let $\delta_{1}$
be a constant less than $\delta_{0}$ such that $\inf_{\theta\in B_{\delta_{1}/2}}\lambda_{{\rm min}}\{A(\theta)^{-1/2}\}\geq m$
and $\inf_{\theta\in B_{\delta_{1}/2}}\lambda_{{\rm min}}\{Ds(\theta)\}\geq m$
for some positive constant $m$. In this case, it is sufficient to
consider $\delta<\delta_{1}$. By Condition \ref{cond:sum_approx},
\begin{align*}
 & r_{n}(s\mid \theta)\leq a_{n}^{d}|A(\theta)|^{1/2}r_{{\rm max}}[a_{n}A(\theta)^{-1/2}\{s-s(\theta)\}].
\end{align*}
Using the transformation $t=t(\theta)$ and $v=v(s)$, 
$f_{n}(s\mid \theta)=\ftil_{n}(s\mid \theta)+\alpha_{n}^{-1}r_{n}(s\mid \theta)$
and applying the Taylor expansion of $s(\theta_{0}+xt)$ around $x=0$,
\begin{align*}
 & \pi[B_{\delta/2},\{s:\|\beta_{\varepsilon}(s-s_{{\rm obs}})\|\geq\delta/2\}] \leq \\
 & c\int_{t(B_{\delta/2})}\int_{\|\beta_{\varepsilon}\varepsilon_{n}v\|\geq\delta/2}N[A(\theta_{0}+a_{n}^{-1}t)^{-1/2}\{Ds(\theta_{0}+e_{n}^{(1)}t)t-A(\theta_{0})^{1/2}W_{{\rm obs}}-a_{n}\varepsilon_{n}v\};0,I_{d}]K(v)\,dvdt\\
 & +c\int_{t(B_{\delta/2})}\int_{\|\beta_{\varepsilon}\varepsilon_{n}v\|\geq\delta/2}r_{{\rm max}}[A(\theta_{0}+a_{n}^{-1}t)^{-1/2}\{Ds(\theta_{0}+e_{n}^{(1)}t)t-A(\theta_{0})^{1/2}W_{{\rm obs}}-a_{n}\varepsilon_{n}v\}]K(v)\,dvdt,
\end{align*}
for some positive constant $c$. To show that the right hand side
of the above inequality is $o_{p}(1)$, consider a function $g_{4}(\cdot)$
in $\mathbb{R}^{d}$ satisfying that $g_{4}(v)$ can be written as
$\overline{g}_{4}(\|v\|)$ and $\overline{g}_{4}(\cdot)$ is decreasing.
Let $A_{n}(t)=A(\theta_{0}+a_{n}^{-1}t)^{-1/2}$, $C_{n}(t)=Ds(\theta_{0}+\xi_{1})$
and $c=A(\theta_{0})^{1/2}W_{{\rm obs}}$. For each $n$ divide $\mathbb{R}^{p}$
into $V_{n}=\{t:\|C_{n}(t)t\|/2\geq\|c+a_{n}\varepsilon_{n}v\|\}$
and $V_{n}^{c}$. In $V_{n}$, $\|A_{n}(t)\{C_{n}(t)t-c-a_{n}\varepsilon_{n}v\}\|\geq m^{2}\|t\|/2$
and in $V^{c}$, $\|t\|\leq2m^{-1}\|c+a_{n}\varepsilon_{n}v\|$. Then
\begin{align*}
 & \int_{t(B_{\delta/2})}\int_{\|\beta_{\varepsilon}\varepsilon_{n}v\|\geq\delta/2}g_{4}[A_{n}(t)\{C_{n}(t)t-c-a_{n}\varepsilon_{n}v\}]K(v)\,dvdt\\
\leq & \int_{\|\beta_{\varepsilon}\varepsilon_{n}v\|\geq\delta/2}\left\{ \int_{\mathbb{R}^{p}}\overline{g}_{4}(m^{2}\|t\|/2)\,dt+\sup_{v\in\mathbb{R}^{p}}g_{4}(v)\int_{_{V_{n}^{c}}}1\,dt\right\} K(v)\,dv,
\end{align*}
where $\int_{_{V_{n}^{c}}}1\,dt$ is the volume of $V_{n}^{c}$ in
$\mathbb{R}^{p}$. Then since $\beta_{\varepsilon}\varepsilon_{n}=o_{p}(1)$,
$a_{n}\varepsilon_{n}=o_{p}(1)$ and $\int_{V_{n}^{c}}1\,dt$ is proportional
to $\|c+a_{n}\varepsilon_{n}v\|^{p}$, the right hand side of the
above inequality is $o_{p}(1)$. This implies $\pi(B_{\delta/2},\{s:\|\beta_{\varepsilon}(s-s_{{\rm obs}})\|\geq\delta/2\})=o_{p}(1)$.

Therefore in both cases $\Pi_{\varepsilon}(\theta^{*}\in B_{\delta}^{c}\mid s_{{\rm obs}})=o_{p}(1)$.
For $\tPi_{\varepsilon}(\theta^{*}\in B_{\delta}^{c}\mid s_{{\rm obs}})$,
since the support of its prior is $B_{\delta}$, there is no probability
mass outside $B_{\delta}$ , i.e. $\tPi_{\varepsilon}(\theta^{*}\in B_{\delta}^{c}\mid s_{{\rm obs}})=0$.
Therefore (a) holds.

For (b), 
\begin{align*}
 & \mbox{{\rm sup}}_{A\in\mathscr{B}^{p}}\left|\Pi_{\varepsilon}(\theta^{*}\in A_{\theta}\cap B_{\delta}\mid s_{{\rm obs}})-\tPi_{\varepsilon}(\theta^{*}\in A_{\theta}\cap B_{\delta}\mid s_{{\rm obs}})\right|\\
= & \frac{\mbox{{\rm sup}}_{A\in\mathscr{B}^{p}}\left|\pi(\mathbb{R}^{p},\{s:\theta^{*}(\theta,s)\in A_{\theta}\cap B_{\delta}\})-\tpi(\mathbb{R}^{p},\{s:\theta^{*}(\theta,s)\in A_{\theta}\cap B_{\delta}\})\right|}{\tpi_{B_{\delta}}(1)}+o_{p}(1)\\
\leq & \alpha_{n}^{-1}\frac{\int_{B_{\delta}}\int_{\mathbb{R}^{d}}\pi(\theta)|r_{n}(s\mid \theta)|K\{\varepsilon_{n}^{-1}(s-s_{{\rm obs}})\}\varepsilon_{n}^{-d}\,dsd\theta}{\tpi_{B_{\delta}}(1)}+o_{p}(1).
\end{align*}
Then by the proof of Lemma 6 of \cite{Li/Fearnhead:2018}, (b) holds.

For (c), to begin with, $a_{n}(\theta_{\varepsilon}^{*}-\ttheta_{\varepsilon}^{*})=a_{n}(\theta_{\varepsilon}-\ttheta_{\varepsilon})-a_{n}\beta_{\varepsilon}(s_{\varepsilon}-\ts_{\varepsilon})$.
By Lemma \ref{lem:LF_results}, $a_{n}(\theta_{\varepsilon}-\ttheta_{\varepsilon})=o_{p}(1)$.
For $a_{n}\beta_{\varepsilon}(s_{\varepsilon}-\ts_{\varepsilon})$,
similar to the arguments of the proof of Lemma \ref{lem:reg_coef},
\begin{align*}
s_{\varepsilon}-s_{{\rm obs}} & =\varepsilon_{n}\left\{ \frac{\tpi_{B_{\delta,tv}}(v)}{\tpi_{B_{\delta,tv}}(1)}+O_{p}(\alpha_{n}^{-1})\right\} \{1+O_{p}(\alpha_{n}^{-1})\},\quad\ts_{\varepsilon}-s_{{\rm obs}}=\varepsilon_{n}\frac{\tpi_{B_{\delta,tv}}(v)}{\tpi_{B_{\delta,tv}}(1)}\{1+O_{p}(\alpha_{n}^{-1})\}.
\end{align*}
Then $a_{n}\beta_{\varepsilon}(s_{\varepsilon}-\ts_{\varepsilon})=O_{p}(\alpha_{n}^{-1}a_{n}\varepsilon_{n})$
which is $o_{p}(1)$ if $\varepsilon_{n}=o(a_{n}^{-3/5})$. Therefore
the first part of (c) holds. Since $\ttheta_{\varepsilon}^{*}=\ttheta_{\varepsilon}-\beta_{\varepsilon}(\ts_{\varepsilon}-s_{{\rm obs}})$,
by the expansion of $\ttheta_{\varepsilon}$ in Lemma \ref{lem:ABC_probability_expand}(c),
the above expansion of $\ts_{\varepsilon}-s_{{\rm obs}}$ and \eqref{eq:tv_moments_expand},
the second part of (c) holds.
\end{proof}

\subsection*{Proof for Results in Section \ref{ABCacc_rate}}

\begin{proof}[of Theorem \ref{thm:acceptance_rate}]

The integrand of $p_{{\rm acc},q}$ is similar to that of $\pi_{\mathbb{R}^{p}}(1)$.
The expansion of $\pi_{\mathbb{R}^{p}}(1)$ is given in Lemma \ref{lem:LF_results}(ii),
and following the same reasoning, $p_{{\rm acc},q}$ can be expanded as
$\varepsilon_{n}^{d}\int_{B_{\delta}}\int_{\mathbb{R}^{d}}q_{n}(\theta)\ftil(s_{{\rm obs}}+\varepsilon_{n}v\mid \theta)K(v)\,dvd\theta\{1+o_{p}(1)\}$.
With transformation $t=t(\theta)$, plugging the expression of $q_{n}(\theta)$
and $\tpi_{\varepsilon,tv}(t,v)$ gives that 
\begin{align*}
p_{{\rm acc},q} & =(a_{n,\varepsilon}\varepsilon_{n})^{d}\int_{t(B_{\delta})}(r_{n,\varepsilon})^{-p}q(r_{n,\varepsilon}^{-1}t-c_{\mu})\frac{\tpi_{\varepsilon,tv}(t,v)}{\pi_{\delta}(\theta_{0}+a_{n,\varepsilon}^{-1}t)}\,dvdt\{1+o_{p}(1)\},
\end{align*}
where $r_{n,\varepsilon}=\sigma_{n}/a_{n,\varepsilon}^{-1}$ and $c_{\mu,n}=\sigma_{n}(\mu_{n}-\theta_{0})$.
By the assumption of $\mu_{n}$, denote the limit of $c_{\mu,n}$
by $c_{\mu}$. Then by Lemma \ref{lem:ABC_likelihood_expansion},
$p_{{\rm acc},q}$ can be expanded as 
\begin{align}
p_{{\rm acc},q} & =(a_{n,\varepsilon}\varepsilon_{n})^{d}\int_{t(B_{\delta})\times\mathbb{R}^{d}}(r_{n,\varepsilon})^{-p}q(r_{n,\varepsilon}^{-1}t-c_{\mu,n})g_{n}(t,v)\,dvdt\{1+o_{p}(1)\}.\label{eq:accrate_expand}
\end{align}
Denote the leading term of the above by $Q_{n,\varepsilon}$. 

For (1), when $c_{\varepsilon}=0$, since $\sup_{t\in\mathbb{R}^{p}}g_{n}(t,v)\leq c_{1}K(v)$
for some positive constant $c_{1}$, $Q_{n,\varepsilon}$ is upper
bounded by $(a_{n}\varepsilon_{n})^{d}c_{1}$ almost surely. Therefore
$p_{{\rm acc},q}\rightarrow0$ almost surely as $n\rightarrow\infty$. When $r_{n,\varepsilon}\rightarrow\infty$,
since $q(\cdot)$ is bounded in $\mathbb{R}^{p}$ by some positive
constant $c_{2}$, $Q_{n,\varepsilon}$ is upper bounded by $(r_{n,\varepsilon})^{-p}c_{2}(a_{n,\varepsilon}\varepsilon_{n})^{d}\int_{\mathbb{R}^{p}\times\mathbb{R}^{d}}g_{n}(t,v)\,dvdt$.
Therefore $p_{{\rm acc},q}\rightarrow0$ in probability as $n\rightarrow\infty$ since
$\int_{\mathbb{R}^{p}\times\mathbb{R}^{d}}g_{n}(t,v)\,dvdt=\Theta_{p}(1)$
by Lemma \ref{lem:ABC_likelihood_expansion}. 

For (2), let $\tilde{t}(\theta)=r_{n,\varepsilon}^{-1}t(\theta)-c_{\mu,n}$
and $\tilde{t}(A)$ be the set $\{\phi:\phi=\tilde{t}(\theta)\text{ for some }\theta\in A\}$.
Since $\tilde{t}=\sigma_{n}^{-1}(\theta-\theta_{0})-c_{\mu,n}$ and
$\sigma_{n}^{-1}\rightarrow\infty$, $\tilde{t}(B_{\delta})$ converges
to $\mathbb{R}^{p}$ in probability as $n\rightarrow\infty$. With the transformation $\tilde{t}=\tilde{t}(\theta)$,
\[
Q_{n,\varepsilon}=\begin{cases}
(a_{n}\varepsilon_{n})^{d}\int_{\tilde{t}(B_{\delta})\times\mathbb{R}^{d}}q(\tilde{t})g_{n}\{r_{n,\varepsilon}(\tilde{t}+c_{\mu,n}),v\}\,d\tilde{t}dv, & \ c_{\varepsilon}<\infty,\\
\int_{\tilde{t}(B_{\delta})\times\mathbb{R}^{d}}q(\tilde{t})g_{n}'\{r_{n,\varepsilon}(\tilde{t}+c_{\mu,n}),v'\}\,d\tilde{t}dv', & \ c_{\varepsilon}=\infty.
\end{cases}
\]
By Lemma \ref{lem:continuity_of_integrals} and the continuous mapping
theorem,
\[
Q_{n,\varepsilon}\rightarrow\begin{cases}
c_{\varepsilon}^{d}\int_{\mathbb{R}^{p}\times\mathbb{R}^{d}}q(\tilde{t})g\{r_{1}(\tilde{t}+c_{\mu}),v\}\,d\tilde{t}dv, & \ c_{\varepsilon}<\infty,\\
\int_{\mathbb{R}^{p}\times\mathbb{R}^{d}}q(\tilde{t})g\{r_{1}(\tilde{t}+c_{\mu}),v\}\,d\tilde{t}dv, & \ c_{\varepsilon}=\infty,
\end{cases}
\]
in distribution as $n\rightarrow\infty$. Since the limits above are $\Theta_{p}(1)$, $p_{{\rm acc},q}=\Theta_{p}(1)$. 

For (3), when $c_{\varepsilon}=\infty$ and $r_{1}=0$, in the above,
the limit of $Q_{n,\varepsilon}$ in distribution is $\int_{\mathbb{R}^{p}\times\mathbb{R}^{d}}q(\tilde{t})g(0,v)\,d\tilde{t}dv=1$.
Therefore $p_{{\rm acc},q}$ converges to $1$ in probability as $n\rightarrow\infty$. 
\end{proof}

\bibliographystyle{biometrika}
\bibliography{reference}

\end{document}